\newtheorem{thm}{Theorem}
\newtheorem{prop}[thm]{Proposition}
\newtheorem{cor}[thm]{Corollary}
\newtheorem{lemma}[thm]{Lemma}
\newtheorem{example}[thm]{Example}
\newtheorem{rem}[thm]{Remark}
\newtheorem{defn}[thm]{Definition}
\newcommand{\ZZ}{{\mathbb{Z}}}
\newcommand{\QQ}{{\mathbb{Q}}}
\newcommand{\NN}{{\mathbb{N}}}
\newcommand{\ghu}{{g_{h,u}}}
\newcommand{\gtu}{{g_{t,u}}}
\newcommand{\gkut}{{g_{k,ut}}}
\newcommand{\hkv}{{h_{k,v}}}
\newcommand{\ghkvu}{{g_{h_{k,v},u}}}
\newcommand{\sshu}{{\sigma_{h}(u)}}
\newcommand{\sskut}{{\sigma_{k}(ut)}}
\newcommand{\sstu}{{\sigma_{t}(u)}}
\newcommand{\sskv}{{\sigma_{k}(v)}}
\newcommand{\sshkvu}{{\sigma_{h_{k,v}}(u)}}
\newcommand{\hp}{{h^\prime}}
\newcommand{\xp}{{x^\prime}}
\newcommand{\yp}{{y^\prime}}
\newcommand{\gphu}{{g^\prime_{h,u}}}
\newcommand{\ghpu}{{g_{h^\prime,u}}}
\newcommand{\ghphu}{{g_{h^\prime h,u}}}
\newcommand{\ghsshpu}{{g_{h, \sigma_{h^\prime}(u)}}}
\newcommand{\fp}{{f^\prime}}
\newcommand{\fpp}{{f^{\prime\prime}}}
\newcommand{\sshpu}{{\sigma_{h^\prime}(u)}}
\newcommand{\sshphu}{{\sigma_{h^\prime h}(u)}}
\newcommand{\sshsshpu}{{\sigma_h\big(\sigma_{h^\prime}(u)\big)}}
\newcommand{\up}{{u^\prime}}
\newcommand{\sh}{{_{_H}}}
\newcommand{\Ind}{\,\mathrm{Ind}}
\newcommand{\Res}{\,\mathrm{Res}}
\newcommand{\ssres}{\,\mathrm{res}}
\newcommand{\id}{\,\mathrm{id}}
\newcommand{\op}{\,\mathrm{op}}
\newcommand{\Sd}{\,\mathrm{Sd}}
\newcommand{\ch}{\,\mathrm{ch}}
\newcommand{\Hom}{\,\mathrm{Hom}}
\def\dom{\backslash}
\def\rouge{}
\def\magenta{}
\definecolor{darkgreen}{rgb}{0,0.8,0.5}
\def\dgreen{}
\begin{document}

\title{Monomial $G$-posets and their Lefschetz invariants}
\author{Serge Bouc and Hatice Mutlu}
\date{}
\maketitle
\begin{abstract}
Let $G$ be a finite group, and $C$ be an abelian group. We introduce the notions of $C$-monomial $G$-sets and $C$-monomial $G$-posets, and state some of their categorical properties. This gives in particular a new description of the $C$-monomial Burnside ring $B_C(G)$. We also introduce Lefschetz invariants of $C$-monomial $G$-posets, which are elements of $B_C(G)$. These invariants allow for a definition of a generalized tensor induction multiplicative map $\mathcal{T}_{U,\lambda}: B_C(G)\to B_C(H)$ associated to any $C$-monomial $(G,H)$-biset $(U,\lambda)$, which in turn gives a group homomorphism $B_C(G)^\times\to B_C(H)^\times$ between the unit groups of $C$-monomial Burnside rings.\medskip\par\noindent
{\small\bf AMS Classification: } 06A11, 19A22, 20J15\\
{\small \bf Keywords:} Burnside ring, monomial, tensor induction, Lefschetz invariant
\end{abstract}
\section{Introduction}
Let $G$ be a finite group, and $C$ be an abelian group. In this work, we first introduce the notion of {\em $C$-monomial $G$-set}: this is a pair $(X,\mathfrak{l})$ consisting of a finite $G$-set $X$, together with a functor from the transporter category $\widehat{X}$ of $X$, to the the groupoid $\bullet_C$ with one object and automorphism group $C$. The $C$-monomial $G$-sets form a category $_CMG\hbox{-\sf set}$, and we show that it is equivalent to the category $_CFG\hbox{-\sf set}$ of $C$-fibred $G$-sets considered by Barker (\cite{FIB}). In particular, the $C$-monomial Burnside ring $B_C(G)$ introduced by Dress (\cite{MON}) is isomorphic to the Grothendieck ring of the category $_CMG\hbox{-\sf set}$.\par
We extend these definitions to the notion of {\em $C$-monomial $G$-poset}: this is a pair $(X,\mathfrak{l})$ consisting of a finite $G$-poset $X$, and a functor $\mathfrak{l}$ from the transporter category $\widehat{X}$ to $\bullet_C$. We associate to each such pair $(X,\mathfrak{l})$ a Lefschetz invariant $\Lambda_{(X,\mathfrak{l})}$ lying in $B_C(G)$. We show that any element of $B_C(G)$ is equal to the Lefschetz invariant of some (non unique) $C$-monomial $G$-poset.\par
We also introduce the category $_CMG\hbox{-\sf poset}$ of $C$-monomial $G$-posets, and show that there are natural functors of induction $\Ind_H^G: {_CMH\hbox{-\sf poset}}\to{_CMG\hbox{-\sf poset}} $ and of restriction $\Res_H^G: {_CMG\hbox{-\sf poset}}\to {_CMH\hbox{-\sf poset}}$, whenever $H$ is a subgroup of $G$. These functors are compatible with the construction of Lefschetz invariants.\par
We extend several classical properties of the Lefschetz invariants of $G$-posets to Lefschetz invariants of $C$-monomial $G$-posets (the classical case being the case where $C$ is trivial).\par
We next turn to the construction of generalized {\em tensor induction functors} $$T_{U,\lambda}:{_CMG\hbox{-\sf poset}}\to {_CMH\hbox{-\sf poset}}$$
associated, for arbitrary finite groups $G$ and $H$, to any $C$-monomial $(G,H)$-biset $(U,\lambda)$. We show that these functors induce well defined tensor induction maps
$$\mathcal{T}_{U,\lambda}:B_C(G)\to B_C(H),$$
which are not additive in general, but multiplicative and preserve identity elements. In particular, we get induced group homomorphisms between the corresponding unit groups of monomial Burnside rings, similar to those obtained by Carman (\cite{IND}) for other usual representation rings.\par
We show moreover that under an additional assumption, these tensor induction functors and their associated tensor induction maps are well behaved for composition. This yields to a (partial) fibred biset functor structure on the group of units of the monomial Burnside ring.
\section{The monomial Burnside ring}
Let $G$ be a finite group  and $C$ be an abelian group which is noted multiplicatively. We denote by $G\hbox{-\sf set}$ the category of finite $G$-sets (with $G$-equivariant maps as morphisms), and $B(G)$ the usual Burnside ring of $G$, i.e. the Grothendieck ring of $G\hbox{-\sf set}$ for relations given by disjoint union decompositions of finite $G$-sets.
\subsection{The category of $C$-fibred $G$-sets}
A {\em $C$-fibred $G$-set}
is defined to be a $C$-free $(C\times G)$-set with finitely many $C$-orbits. Let $_CFG$-{\sf set} denote the category 
of $C$-fibred $G$-sets where morphisms are $(C\times G)$-equivariant maps.
The coproduct of $C$-fibred $G$-sets $X$, $Y$ is their coproduct $X\sqcup Y$ as  sets, with the obvious $(C\times G)$-action.
If $X$ and $Y$ are $C$-fibred $G$-sets, there is a $C$-action on $X\times Y$ defined
 by $c(x,y)= (cx,c^{-1}y)$ for any $c\in C$ and $(x,y) \in X\times Y$. 
 The $C$-orbit of an element $(x,y)$ of $X\times Y$ is denoted by $x\otimes y$ 
 and the set of $C$-orbits is denoted by $X\otimes Y$. Moreover $C\times G$ acts on $X\otimes Y$ by 
$$(c,g)(x\otimes y)=cgx\otimes gy$$
for any $(c,g) \in C\times G$ and $x\otimes y\in X\otimes Y$. 
One checks easily that $X\otimes Y$ is again a $C$-fibred $G$-set, 
called the tensor product of $X$ and $Y$.

We denote the isomorphism class of a $C$-fibred $G$-set $X$ by $[X]$.
The {\em $C$-monomial Burnside ring} $B_{C}(G)$, introduced by Dress (\cite{MON}), is defined as the Grothendieck group of the category of $C$-fibred $G$-sets, for relations given by $[X]+[Y]= [X\sqcup Y]$.  The ring structure of $B_C(G)$ is induced by $[X]\cdot[Y]= [X\otimes Y]$.  
The identity element is the set $C$ with trivial $G$-action and the zero element is
the empty set. If $C$ is trivial we recover the ordinary Burnside ring of the group $G$.\par
Given a $C$-fibred $G$-set $X$, we denote the set of $C$-orbits on $X$ by  $C\dom X$.
 The group~$G$ acts on $C\dom X$, and $X$ is $(C\times G)$-transitive if and only if $C\dom X$ is $G$-transitive.
 If $C\dom X$ is transitive as a $G$-set it is isomorphic to $G/U$ for some $U\leq G.$ 
 There exists a group homomorphism $\mu :U \rightarrow C$ such that if $U$ is the stabilizer
  of the orbit $Cx$, then $ax=\mu(a)x$ for all $a\in U$. Since  the stabilizer $(C\times G)_x$ of $x$ in $C\times G$ is equal to 
$$(C\times G)_x=\{\big(\mu(a)^{-1},a\big)\mid a\in U \},$$
the {\rouge $C$-fibred} $G$-set $X$ is determined up to isomorphism 
by the subgroup $U$ and $\mu$. \par
Conversely, let $U$ be a subgroup of $G$, and 
$\mu: U \rightarrow C$ be a group homomorphism. Then we set $U_\mu=\{\big(\mu(a)^{-1},a\big)\mid a\in U\}$, and denote by $[U,\mu]_G$ the {\rouge $C$-fibred} $G$-set $(C \times G)/U_\mu$. The pair $(U,\mu)$ is 
called a $C$-subcharacter of $G$. We denote the set of $C$-subcharacters
by $\ch(G)$. The group $G$ acts on $\ch(G)$ by conjugation. The $G$-set $\ch(G)$
is a poset with the relation $\leq$ defined by 
$$(U,\mu) \leq (V,\nu) \Leftrightarrow U \leq V \, \text{and} \, \Res^V_U\nu=\mu$$
for any $(U,\mu)$ and $(V,\nu)$ in $\ch(G)$.\par

As an abelian group we have 
$$B_{C}(G)= \bigoplus\limits _{(U,\mu) \in_{G} \ch(G) }\ZZ [U,\mu]_G$$
where $(V , \nu)$ runs over $G$-representatives 
of the $C$-subcharacters of $G$, details can be seen in \cite{FIB}.

\subsection{The category of $C$-monomial $G$-sets}
Let $G$ be a finite group and $C$ be an abelian group.
Given a $G$-set $X$, we consider its transporter category $\widehat{X}$
whose objects are the elements of $X$ and given $x$, $y$ in $X$
 the set of morphisms from $x$ to $y$ is 
$$\Hom_{\widehat{X}}(x, y)= \{ g \in G \mid gx=y\}.$$
Let $\bullet_{C}$  denote the category with one object
 where morphisms are the elements of $C$ and composition is multiplication in $C$.
Now we define $C$-monomial $G$-sets as follows.
\begin{defn}
A {\em $C$-monomial $G$-set} is a pair $(X, \mathfrak{l})$ consisting of a finite $G$-set $X$ and a functor $\mathfrak{l}: \widehat{X} \rightarrow \bullet_{C}$.
\end{defn}
{\dgreen In otherwords, for each $x,y\in X$ and $g\in G$ such that $gx=y$, we have an element $\mathfrak{l}(g,x,y)$ of $C$, with the property that $\mathfrak{l}(h,y,z)\mathfrak{l}(g,x,y)=\mathfrak{l}(hg,x,z)$ if $h\in G$ and $hy=z$, and $\mathfrak{l}(1,x,x)=1$ for any $x\in X$.}\par
Let $(X, \mathfrak{l})$ and $(Y, \mathfrak{m})$ be $C$-monomial $G$-sets. If $f:X\to Y$ is a map of $G$-sets, we slightly abuse notation and also denote by $f:\widehat{X}\to\widehat{Y}$ the obvious functor induced by~$f$. Now a {\em map $(f, \lambda) : (X, \mathfrak{l}) \rightarrow (Y, \mathfrak{m}) $ 
of $C$-monomial $G$-sets} is a pair consisting of a map $f:X\to Y$ of $G$-sets
 and  a natural transformation $\lambda : \mathfrak{l} \rightarrow \mathfrak{m} \circ f$. 
We denote by $_CMG$-{\sf set} the category whose objects are $C$-monomial $G$-sets, 
morphisms are the maps of $C$-monomial $G$-sets, and composition is the obvious one.\par
Let $(X, \mathfrak{l})$ and $(X^\prime, \mathfrak{l}^\prime)$ be $C$-monomial 
$G$-sets. We define the disjoint union of $C$-monomial $G$-sets as 
$(X, \mathfrak{l}) \sqcup (X^\prime, \mathfrak{l}^\prime)
= (X\sqcup X^\prime, \mathfrak{l}\sqcup \mathfrak{l}^\prime)$
where $X\sqcup X^\prime$ is the disjoint union of $G$-sets and 
$$\mathfrak{l}\sqcup \mathfrak{l}^\prime: 
\widehat{X\sqcup X^\prime} \rightarrow \bullet_{C}$$
is the functor such that
\[
(\mathfrak{l}\sqcup \mathfrak{l}^\prime)(g,z_1,z_2) = 
\begin{cases}
\mathfrak{l}(g,z_1,z_2) & z_1,\,z_2 \in X \\
\mathfrak{l}^\prime(g,z_1,z_2) &   z_1,z_2 \in  X^\prime
\end{cases}
\]
for any $z_1,\,z_2 \in X\sqcup X^\prime$ such that $gz_1=z_2$ for
some $g\in G$.\par
The product of $C$-monomial $G$-sets 
$(X, \mathfrak{l}),\,(X^\prime, \mathfrak{l}^\prime)$  is defined 
to be $(X \times X^\prime, \mathfrak{l} \times \mathfrak{l}^\prime)$
where $X \times X^\prime$ is the product of $G$-sets and
 $\mathfrak{l} \times \mathfrak{l}^\prime: \widehat{X \times Y}
 	\rightarrow \bullet_{C}$
is the functor defined by

$$(\mathfrak{l} \times \mathfrak{l}^\prime)\big(g,(x, \xp),(y, \yp)\big)
= \mathfrak{l}(g,x,y) \mathfrak{l}^\prime(g,\xp,\yp)$$

for $g \in G$ and $(x, \xp), (y, \yp) \in X \times X^\prime$ 
such that $g(x, \xp) =(y, \yp)$.\par
Our goal is to show that the categories $_CMG$-{\sf set} and $_CFG$-{\sf set} are equivalent.
 For this, we define a functor $F: {_CMG\text{-\sf set}} \rightarrow {_CFG\text{-\sf set}}$ as follows:
 given a $C$-monomial $G$-set $(X,\mathfrak{l})$, we set 
$$F(X,\mathfrak{l})=C\times_\mathfrak{l} X,$$ 
which is the direct product $C\times X$ endowed with the $(C\times G)$-action defined by
 $(k,g)(c,x)= \big(kc\mathfrak{l}(g,x,gx), gx\big)$ for any $(k,g)\in C\times G$ and $(c,x) \in C\times X$. \par
Given a map $(f,\lambda): (X,\mathfrak{l})\rightarrow (Y,\mathfrak{m})$ of $C$-monomial $G$-sets, we define 
$$F(f,\lambda): C\times_\mathfrak{l} X \rightarrow C\times_\mathfrak{m} Y$$ 
by $F(f,\lambda)(c,x)= (c\lambda_x, f(x))$ for any 
$(c,x)\in  C\times_\mathfrak{l} X$. Then $F(f,\lambda)$ is
a $(C\times G)$-map: indeed, given $(k,g)\in C\times G$ and $(c,x)\in C\times X$, we have
\begin{align*}
(k,g)F(f,\lambda)(c,x)&= (k,g)\big(c\lambda_x, f(x)\big)
=  \big(kc\lambda_x\mathfrak{m}(g,f(x),f(gx)),f(gx)\big)\\
&{\rouge = \big(kc\lambda_{gx}\mathfrak{l}(g,x,gx), f(gx)\big)}
= F(f,\lambda)\big(kc\mathfrak{l}(g,x,gx), gx\big)\\
&= F(f,\lambda)\big((k,g)(c,x)\big).\\
\end{align*}
It is clear that $F:{_CMG\text{-\sf set}} \rightarrow {_CFG\text{-\sf set}}$ is a functor. 
\begin{lemma}
	Let $C$ be an abelian group and $G$ be a finite group. Then the above functor
	 $F: {_CMG\hbox{-\sf set}\to{_CFG}\hbox{-\sf set}}$ is an equivalence of categories.
\end{lemma}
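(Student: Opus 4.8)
The plan is to prove that $F$ is faithful, full, and essentially surjective, since these three properties together characterize an equivalence of categories. The structural fact I would exploit throughout is that $C$ and $G$ commute inside $C\times G$, so that the $C$-action on $C\times_{\mathfrak{l}}X$ — namely the restriction of the $(C\times G)$-action to $C\times 1$ — is just left multiplication on the first coordinate, because $\mathfrak{l}(1,x,x)=1$ gives $(k,1)(c,x)=(kc,x)$.

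Faithfulness is immediate: if $(f,\lambda)$ and $(f',\lambda')$ are maps $(X,\mathfrak{l})\to(Y,\mathfrak{m})$ with $F(f,\lambda)=F(f',\lambda')$, then evaluating at $(1,x)$ yields $(\lambda_x,f(x))=(\lambda'_x,f'(x))$ for every $x\in X$, whence $f=f'$ and $\lambda=\lambda'$. For fullness I would start from an arbitrary $(C\times G)$-map $\phi\colon C\times_{\mathfrak{l}}X\to C\times_{\mathfrak{m}}Y$. Since the $C$-action is left multiplication on the first coordinate, $C$-equivariance forces $\phi(c,x)=c\,\phi(1,x)$, so $\phi$ is completely determined by the elements $\phi(1,x)$; writing $\phi(1,x)=(\lambda_x,f(x))$ produces a map $f\colon X\to Y$ and a family $(\lambda_x)_{x\in X}$ in $C$. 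The crux of the step is to check that $G$-equivariance of $\phi$ is exactly equivalent to $f$ being a $G$-map together with $\lambda$ being a natural transformation $\mathfrak{l}\to\mathfrak{m}\circ f$. Computing $\phi\big((1,g)(1,x)\big)=\big(\mathfrak{l}(g,x,gx)\lambda_{gx},f(gx)\big)$ and $(1,g)\phi(1,x)=\big(\lambda_x\,\mathfrak{m}(g,f(x),gf(x)),gf(x)\big)$, equality of second coordinates gives $f(gx)=gf(x)$, and equality of first coordinates gives $\lambda_{gx}\mathfrak{l}(g,x,gx)=\mathfrak{m}(g,f(x),f(gx))\lambda_x$, which is precisely the naturality square for $\lambda$. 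Hence $(f,\lambda)$ is a morphism in ${}_CMG\hbox{-\sf set}$ with $F(f,\lambda)=\phi$.

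For essential surjectivity, given a $C$-fibred $G$-set $Y$, I would set $X=C\dom Y$ with its induced $G$-action and choose a representative $y_o\in Y$ of each $C$-orbit $o\in X$. Because $C$ acts freely, for $g\in G$ and $o\in X$ the element $gy_o$ lies in the orbit $go$, so there is a unique $c\in C$ with $gy_o=c\,y_{go}$, and I set $\mathfrak{l}(g,o,go)=c$. Using once more that $C$ is central in $C\times G$ (so that $h(cz)=c(hz)$ for $c\in C$, $h\in G$, $z\in Y$), a short computation shows $\mathfrak{l}$ satisfies $\mathfrak{l}(1,o,o)=1$ and the composition identity $\mathfrak{l}(hg,o,hgo)=\mathfrak{l}(h,go,hgo)\mathfrak{l}(g,o,go)$, so that $\mathfrak{l}\colon\widehat{X}\to\bullet_C$ is a genuine functor. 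The assignment $(c,o)\mapsto c\,y_o$ is then a $(C\times G)$-equivariant bijection $C\times_{\mathfrak{l}}X\to Y$, establishing $F(X,\mathfrak{l})\cong Y$.

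I expect the fullness step to be the main obstacle, the essential content being the recognition that the single identity expressing $(C\times G)$-equivariance of $\phi$ splits cleanly into the $G$-map condition on $f$ and the naturality condition on $\lambda$. Both this step and essential surjectivity rest ultimately on the commutativity of the $C$- and $G$-actions, which is exactly what keeps every cocycle manipulation consistent.
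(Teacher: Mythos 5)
Your proof is correct and follows essentially the same route as the paper: faithfulness and fullness by using $C$-equivariance to reduce a $(C\times G)$-map $\phi$ to its values $\phi(1,x)$ and then reading the $G$-equivariance identity as the $G$-map condition plus the naturality square, and essential surjectivity by passing to the set of $C$-orbits with a choice of representatives. The only (harmless) differences are that you construct the isomorphism $C\times_{\mathfrak{l}}(C\dom Y)\to Y$ directly rather than its inverse $X\to C\times_{\mathfrak{l}}(C\dom X)$ as the paper does, and you make explicit the choice of orbit representatives underlying the definition of $\mathfrak{l}$, which the paper leaves implicit.
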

\begin{proof}
	We prove that $F$ is fully faithful and essentially surjective. 
	First we show that $F$ is essentially surjective. Given a $C$-fibred $G$-set $X$,
	let $C\backslash X$ be the set of $C$-orbits. Clearly $C\backslash X$ is a $G$-set.
	We define a functor $\mathfrak{l}: \widehat{C\backslash X} \rightarrow \bullet_C$.
	Let $Cx$, $Cy \in C\backslash X$ such that $Cgx= Cy$ for some $g \in G$.
	Then there exists a unique $c \in C$ such that $gx=cy$. We set 
	$\mathfrak{l}(g,Cx,Cy)= c$. We have 
	$F(C\backslash X, \mathfrak{l})= C\times_\mathfrak{l} (C\backslash X)$.
	Now choose a set $[C\backslash X]$ of $G$-representatives of the $G$-action
	on $C\backslash X$. Then for any $x\in X$, there exits a unique 
	$C\sigma_x \in [C\backslash X]$ such that $x \in C\sigma_x.$ Since $X$ is 
	$C$-free, there exists a unique $c_x \in C$ such that $x=c_x\sigma_x.$
	We define a $(C\times G)$-map 
	$f: X \rightarrow C\times_\mathfrak{l} (C\backslash X)$ such that 
	$f(x)= (c_x, C\sigma_x).$ Then
	$$(c,g)f(x)= (c,g)(c_x,C\sigma_x)= 
	{\rouge (c_xc\mathfrak{l}(g,C\sigma_x,Cg\sigma_x), Cg\sigma_x)}
	=(c_xc,Cg\sigma_x)$$
	$$=(c_{cgx},Cg\sigma_x)= f\big((c,g)x\big).$$
	So $f$ is a $(C\times G)$-map and clearly an isomorphism. Thus,
	$F$ is essentially surjective. 
	
	Let $(X,\mathfrak{l})$ and $(Y,\mathfrak{m})$ be
	$C$-monomial $G$-sets. We need to show that the map
	$$\overline{F}: \Hom\big((X,\mathfrak{l}),(Y,\mathfrak{m})\big)
	\rightarrow\Hom\big(F(X,\mathfrak{l}), F(Y,\mathfrak{m})\big)$$ 
	induced by $F$ is surjective and 
	injective. Let $\varphi : C\times_\mathfrak{l}X \rightarrow 
	C\times_\mathfrak{m}Y$ be a $(C\times G)$-map. 
	Given $(1,x) \in C\times_\mathfrak{l}X$, 
	let $\varphi(1,x)=(c_x,z_x)$ 
	for {\rouge $(c_x,z_x)\in C\times Y$.} 
	Since $\varphi$ is a $(C\times G)$-map, we get
	$$\varphi(c, x)= (cc_x, z_x)$$
	 and 
	$$\varphi(1,gx)= \big(c_x\mathfrak{m}(g,z_x,gz_x)\mathfrak{l}^{-1}(g,x,gx), gz_x\big)$$
	for any $c \in C$ and $g\in G$. We define a map
	$$(f,\lambda): (X,\mathfrak{l}) \rightarrow (Y,\mathfrak{m})$$
	 such that
	$f: X \rightarrow Y$ is defined by $f(x)= z_x$ and 
	$\lambda : \mathfrak{l} \rightarrow \mathfrak{m}\circ f $
	is defined by $\lambda_x= c_x$ for any $x \in X$.
	Clearly, $f$ is a $G$-set map.
	Let $x \in X$ and $g\in G$. Then
	$$\mathfrak{m}\big(g,f(x),f(gx)\big)\lambda_x= 
	\mathfrak{m}\big(g,f(x),f(gx)\big)c_x= c_x\mathfrak{m}\big(g,f(x),f(gx)\big)
	\mathfrak{l}^{-1}(g,x,gx)\mathfrak{l}(g,x,gx)$$
	$$= \mathfrak{l}(g,x,gx)c_{gx}= \mathfrak{l}(g,x,gx)\lambda_{gx}.$$
	So $\lambda: \mathfrak{l} \rightarrow \mathfrak{m}\circ f$ is a natural
	transformation and $(f,\lambda)$ is a map of $C$-monomial
	$G$-sets. Thus, {\rouge $\overline{F}(f,\lambda)=\varphi$ and $\overline{F}$
	is surjective.} The injectivity is clear, so $F$ is fully faithful.
	\end{proof}

\begin{prop}
Let $G$ be a finite group. Then $B_C(G)$ is isomorphic to the Grothendieck ring of the category $_CMG$-{\sf set}, for relations given by decomposition into disjoint unions of $C$-monomial $G$-sets and multiplication induced by product of $C$-monomial $G$-sets.
\end{prop}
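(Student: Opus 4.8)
The plan is to leverage the equivalence $F:{_CMG\text{-\sf set}}\to{_CFG\text{-\sf set}}$ established in the preceding Lemma. Since $B_C(G)$ is by definition the Grothendieck ring of ${_CFG\text{-\sf set}}$, and the Grothendieck construction depends only on the isomorphism classes of objects together with the coproduct (for addition) and the tensor product (for multiplication), it suffices to verify that $F$ is an equivalence of symmetric monoidal additive categories. Concretely, I would show that $F$ carries disjoint unions to disjoint unions, products to tensor products, and the identity object to the identity object; being an equivalence, $F$ then induces a bijection on isomorphism classes that is simultaneously additive, multiplicative and identity-preserving, hence a ring isomorphism.

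First I would record the additive compatibility: from the definition $F(X,\mathfrak{l})=C\times_\mathfrak{l}X$ one sees immediately that the underlying $(C\times G)$-set of $F\big((X,\mathfrak{l})\sqcup(X^\prime,\mathfrak{l}^\prime)\big)$ is the disjoint union of $C\times_\mathfrak{l}X$ and $C\times_{\mathfrak{l}^\prime}X^\prime$, so $F\big((X,\mathfrak{l})\sqcup(X^\prime,\mathfrak{l}^\prime)\big)\cong F(X,\mathfrak{l})\sqcup F(X^\prime,\mathfrak{l}^\prime)$. Likewise, the identity object of ${_CMG\text{-\sf set}}$ is the one-point $G$-set with its unique functor to $\bullet_C$, and $F$ sends it to $C$ with trivial $G$-action, which is precisely the identity element of $B_C(G)$.

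The main work, and the step I expect to be the principal obstacle, is the multiplicative compatibility $F\big((X,\mathfrak{l})\times(X^\prime,\mathfrak{l}^\prime)\big)\cong F(X,\mathfrak{l})\otimes F(X^\prime,\mathfrak{l}^\prime)$. Here I would exhibit an explicit $(C\times G)$-isomorphism
$$\Phi:C\times_{\mathfrak{l}\times\mathfrak{l}^\prime}(X\times X^\prime)\longrightarrow(C\times_\mathfrak{l}X)\otimes(C\times_{\mathfrak{l}^\prime}X^\prime),\qquad\Phi\big(c,(x,\xp)\big)=(c,x)\otimes(1,\xp).$$
That $\Phi$ is a bijection is clear, since the diagonal $C$-action defining the tensor product lets one normalise every orbit to a representative whose second $C$-coordinate equals $1$; the inverse sends $(c_1,x)\otimes(c_2,\xp)$ to $\big(c_1c_2,(x,\xp)\big)$, which is well defined because $C$ is abelian. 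The crux is checking $(C\times G)$-equivariance: applying $(k,g)$ inside $C\times_{\mathfrak{l}\times\mathfrak{l}^\prime}(X\times X^\prime)$ produces the factor $\mathfrak{l}(g,x,gx)\mathfrak{l}^\prime(g,\xp,g\xp)$ coming from $\mathfrak{l}\times\mathfrak{l}^\prime$, while applying $(k,g)$ to $(c,x)\otimes(1,\xp)$ in the tensor product and then normalising back to the standard representative produces exactly the same factor, using the rule $(c,g)(x\otimes y)=cgx\otimes gy$ together with the commutativity of $C$. This is a direct computation entirely parallel to the equivariance check carried out in the proof of the Lemma.

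With all three compatibilities in hand, the bijection on isomorphism classes induced by the equivalence $F$ respects addition, multiplication and units, so it extends to a ring isomorphism between the Grothendieck ring of ${_CMG\text{-\sf set}}$ and $B_C(G)$. I would close by remarking that, since $F$ is an equivalence, the Grothendieck ring of ${_CMG\text{-\sf set}}$ is automatically well defined, its ring axioms being inherited from those of $B_C(G)$ through $\Phi$, so no separate verification of associativity or distributivity is required.
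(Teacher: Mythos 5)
Your proposal is correct and follows essentially the same route as the paper: both transport the ring structure along the equivalence $F$, treat additivity as immediate from $F$ preserving disjoint unions, and establish multiplicativity via the very same explicit $(C\times G)$-isomorphism $\big(c,(x,\xp)\big)\mapsto(c,x)\otimes(1,\xp)$ with the identical equivariance computation producing the factor $\mathfrak{l}(g,x,gx)\mathfrak{l}^\prime(g,\xp,g\xp)$. Your additional checks (the identity object and the explicit inverse of $\Phi$) are harmless refinements the paper leaves implicit, not a different argument.
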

\begin{proof}
We let $B^1_C(G)$ denote the Grothendieck ring of
the category $_CMG$-{\sf set}.
The equivalence 
$$F: {_CMG\text{-\sf set}} \rightarrow {_CFG\text{-\sf set}}$$
induces a bijection 
$$\widehat{F} : B^1_C(G) \rightarrow B_C(G)$$
such that 
$$\widehat{F}\Big(\big[(X,\mathfrak{l})\big]\Big)= [C\times_\mathfrak{l} X]$$
for any $C$-monomial $G$-set $(X,\mathfrak{l})$. Now we show that 
$\widehat{F}$ is a ring homomorphism. Let $(X_1,\mathfrak{l}_1)$ and
$(X_2,\mathfrak{l}_2)$ be $C$-monomial $G$-sets. Then
$$\widehat{F}\Big(\big[(X_1,\mathfrak{l}_1)\big]+
\big[(X_1,\mathfrak{l}_1)\big]\Big)= 
\widehat{F}\Big(\big[(X_1,\mathfrak{l}_1)\sqcup(X_1,\mathfrak{l}_1)\big]\Big)
= \widehat{F}\Big(\big[(X_1\sqcup X_2, 
\mathfrak{l}_1\sqcup\mathfrak{l}_2)\big]\Big)$$
$$=\big[C\times_{\mathfrak{l}_1\sqcup\mathfrak{l}_2} (X_1\sqcup X_2)\big]
= \big[(X_1,\mathfrak{l}_1)\sqcup(X_2,\mathfrak{l}_2)\big]
=[C\times_{\mathfrak{l}_1}X_1]+[C\times_{\mathfrak{l}_2}X_2].$$

For multiplicativity of $\widehat{F}$ we define a map
$$f: C\times_{\mathfrak{l}_1\times\mathfrak{l}_2} (X_1\times X_2)
\rightarrow (C\times_{\mathfrak{l}_1}X_1)\times_C(C\times_{\mathfrak{l}_2}X_2)$$
such that
$f\big(c,(x_1,x_2)\big)=(c,x_1)\times_C(1,x_2)$.
 Let $(k,g)\in C\times G$ and
$\big(c,(x_1,x_2)\big) \in C\times_{\mathfrak{l}_1\times\mathfrak{l}_2} (X_1\times X_2)$.
 Then
\begin{align*}
(k,g)f\big(c,(x_1,x_2)\big)&= (k,g)\big((c,x_1)\times_C(1,x_2)\big)
= \big((k,g)(c,x_1)\times_C(1,g)(1,x_2)\big)\\
&= \big(kc\mathfrak{l}_1(g,x_1,gx_1),gx_1\big)
\times_C\big(\mathfrak{l}_2(g,x_2,gx_2),gx_2\big)\\
&= (kc\mathfrak{l}_1(g,x_1,gx_1)\mathfrak{l}_2(g,x_2,gx_2),gx_1)
\times_C(1,gx_2)\\
&= f\big(kc\mathfrak{l}_1(g,x_1,gx_1)\mathfrak{l}_2(g,x_2,gx_2),g(x_1,x_2)\big)\\
&= f\big((k,g)(c,(x_1,x_2))\big).\\
\end{align*}
So $f$ is a $(C\times G)$-map and obviously, $f$ is a $(C\times G)$-isomorphism. 
Using $f$ we get 
$$\widehat{F}\big([X_1,\mathfrak{l}_1]\cdot[X_2,\mathfrak{l}_2]\big)
= \widehat{F}\big([X_1\times X_2, \mathfrak{l}_1\times \mathfrak{l}_2]\big)
= \big[C\times_{\mathfrak{l}_2\times\mathfrak{l}_2}(X_1\times X_2)\big]
=\big[(C\times_{\mathfrak{l}_1}X_1)\times_C(C\times_{\mathfrak{l}_2}X_2)\big].$$
Thus, the desired result follows.

\end{proof}
\begin{rem}\label{lx}
Let $(X,\mathfrak{l})$ be a $C$-monomial $G$-set.
For all $x \in X$, we get a character 
$\mathfrak{l}_x : G_x \rightarrow C$ 
defined by $\mathfrak{l}_x(g)= \mathfrak{l}(g, x,x)$ 
for $g \in G_x$.  On the other hand given a subgroup $U$
of $G$ and a group homomorphism $\mu : U \rightarrow C$ we 
get a $C$-monomial $G$-set $(G/U, {\rouge \widehat{\mu}})$ where 
and ${\rouge \widehat{\mu}}:  \widehat{G/U} \rightarrow \bullet_{C}$ 
is the functor such that given $gU,\,kU \in G/U$ if $hgU=kU$
for some $g \in G$ then ${\rouge \widehat{\mu}}(h, gU, kU)= \mu(k^{-1}hg)$. {\rouge Moreover,
$[U,\mu]_G$ and $[G/U, \widehat{\mu}]$ represents the same element in $B_C(G)$.}
\end{rem}

\subsection{The Lefschetz invariant attached to a monomial $G$-poset}
A $G$-poset $X$ is a partially ordered set $(X,\leq)$ with a compatible $G$-action (that is $gx\leq gy$ whenever $g\in G$ and $x\leq y$ in $X$). A map of $G$-posets is a $G$-equivariant map of posets. We denote by $G\hbox{-\sf poset}$ the category of finite $G$-posets obtained in this way.\par
There is an obvious functor $\iota_G: G\hbox{-\sf set}\to G\hbox{-\sf poset}$ sending each finite $G$-set to the set $X$ ordered by the equality relation, and each $G$-equivariant map to itself.\par
The Lefschetz invariant attached to a finite $G$-poset,
which is an element of the Burnside ring of~$G$ has been
introduced in \cite{LEF} by Th\'evenaz.
We will define similarly a Lefschetz invariant attached
to a $C$-monomial $G$-poset as an element of the
$C$-monomial Burnside ring of $G$.\par

\subsubsection{The category of $C$-monomial $G$-posets}

Given a $G$-poset $X$, we consider the category
 $\widehat{X}$  whose objects 
are the elements of $X$ and given $x$, 
$y$ in $X$ the set of morphisms from $x$ to $y$ is 
$$\Hom_{\widehat{X}}(x, y)= \{ g \in G \mid gx \leq y\}.$$
Now we define a $C$-monomial $G$-poset as follows.

\begin{defn}
	A {\em$C$-monomial $G$-poset} is a pair $(X, \mathfrak{l})$ consisting of a $G$-poset $X$ and a functor $\mathfrak{l}: \widehat{X} \rightarrow \bullet_{C}$.   
\end{defn}
{\dgreen In otherwords, for each $x,y\in X$ and $g\in G$ such that $gx\leq y$, we have an element $\mathfrak{l}(g,x,y)$ of $C$, with the property that $\mathfrak{l}(h,y,z)\mathfrak{l}(g,x,y)=\mathfrak{l}(hg,x,z)$ if $h\in G$ and $hy\leq z$, and $\mathfrak{l}(1,x,x)=1$ for any $x\in X$.}\par
Let $(X, \mathfrak{l})$ and $(Y, \mathfrak{m})$ be $C$-monomial 
$G$-posets. {\em A map of $C$-monomial $G$-posets} from $(X, \mathfrak{l})$ to $(Y, \mathfrak{m})$ is a pair
$(f, \lambda) : (X, \mathfrak{l}) \rightarrow (Y, \mathfrak{m})$,
where $f: X \rightarrow Y $ is a map of $G$-posets and 
{\rouge $\lambda : \mathfrak{l} \rightarrow \mathfrak{m} \circ f$} is
a natural transformation. We denote the
category of $C$-monomial $G$-posets by $_CMG$-{\sf poset}. 
Product and disjoint union of $C$-monomial $G$-posets
	are defined as for $C$-monomial $G$-sets.
When $C$ is the trivial group, we will identify the category $_CMG\hbox{-\sf poset}$ with $G\hbox{-\sf poset}$.\par
\begin{rem}\label{res}
If $(X,\mathfrak{l})$ is a $C$-monomial $G$-poset, then for any $x \in X$
we get a character $\mathfrak{l}_x: G_x \rightarrow C$ defined by 
$\mathfrak{l}_x(g)=\mathfrak{l}(g,x,x)$. Moreover, if $x\leq y$, then
$$\ssres^{G_x}_{G_x\cap{G_y}}\mathfrak{l}_x= \ssres^{G_y}_{G_x\cap{G_y}}\mathfrak{l}_y$$
because we have the following commutative diagram:
{\dgreen
$$\xymatrix@C=7ex@R=7ex{
\mathfrak{l}(x)\ar[r]^-{\mathfrak{l}(1, x, y)}\ar[d]_-{\mathfrak{l}(g, x, x)}&\mathfrak{l}(y)\ar[d]^-{\mathfrak{l}(g, y, y)}\\
\mathfrak{l}(x)\ar[r]_-{\mathfrak{l}(1, x, y)}&\mathfrak{l}(y).
}
$$
}
%\begin{picture}(400,120)
%\put(140,95){$\mathfrak{l}(x)$}
%\put(140,35){$\mathfrak{l}(x)$}
%\put(220,95){$\mathfrak{l}(y)$}
%\put(220,35){$\mathfrak{l}(y)$ }
%\put(390,35){$.$ }
%
%\put(150,89){\vector(0,-1){38}}
%\put(232,89){\vector(0,-1){38}}
%\put(163,98){\vector(1,0){50}}
%\put(163,38){\vector(1,0){50}}
%
%
%\put(167,104){$\mathfrak{l}(1, x, y)$}
%\put(100,65){$\mathfrak{l}(g, x, x)$}
%\put(240,65){$\mathfrak{l}(g, y, y)$}
%\put(167,25){$\mathfrak{l}(1, x, y)$}
%\end{picture}

\end{rem}

Let $H$ be a subgroup of $G$ { \rouge and $(X, \mathfrak{l})$ be a $C$-monomial $H$-set.} 
We let $G\times_HX$ to be the  quotient of $G\times X$ by the action of $H$.
The set $G\times_HX$ is a $G$-set 
via the action $g(u,_{_H} x) = (gu,_{_H} x)$, for any 
$g\in G$, and $ (u,_{_H} x) \in G\times_H X$.
We define an order relation
$\leq$ on $G\times_HX$ as
$$\forall  (u,_{_H}x),\,(v,_{_H}y)\in G\times_HX,\,\,
(u,_{_H}x)\leq (v,_{_H}y) \Leftrightarrow \exists h \in H,\, u=vh,\,x\leq h^{-1}y.$$
Since we have 
$$G\times_HX= \bigsqcup\limits_{g \in G/H} g\times_HX,$$
it's enough to consider the chains of type 
$(u,_{_H}x_0)<...<(u,_{_H}x_n)$ in $G\times_HX$ for some $u\in G$ and a chain
$x_0<...<x_n$ in $X$ for some $n\in \NN$.\par
{\rouge Let $(u,_{_H}x),\,(u,_{_H}y)\in G\times_HX$ and $g \in G$ such that $g(u,_{_H}x)\leq (u,_{_H}y)$.}
 Then
there exists $h\in H$ such that 
$gu=uh$ and $hx\leq y$. 
We define {\em the induced $C$-monomial $G$-poset} $\Ind^G_H(X,\mathfrak{l})$  of $(X,\mathfrak{l})$ as the pair $(G\times_HX, G\times_H\mathfrak{l})$
where $G\times_H\mathfrak{l} : \widehat{G\times_HX} \rightarrow \bullet_C$ 
is defined by
$$(G\times_H\mathfrak{l})\big(g,(u,_{_H}x),(u,_{_H}y)\big)
=\mathfrak{l}(h,x,y).$$
Now show that  $(G\times_HX, G\times_H\mathfrak{l})$ is a
$C$-monomial $G$-poset.\par
Let $(u,_{_H}x),\,(u,_{_H}y),\,(u,_{_H}z) \in G\times_HX$
such that 
$$g(u,_{_H}x)\leq (u,_{_H}y)$$  and 
$$g^\prime(u,_{_H}y)\leq (u,_{_H}z)$$
for some $g,\,g^\prime \in G$. Then there exist some $h,\,h^\prime \in H$
such that 
$$ gu=uh,\,\,\,g^\prime u=uh^\prime,\,\,\, hx\leq y,\,\,\,h^\prime y\leq z.$$
Then $t=h'h\in H$. Moreover $g^\prime gu=uh^\prime h=ut$ and $tx=h'hx\leq z$. 
Now we get
$$
(G\times_H\mathfrak{l})\big(g^\prime g, (u,_{_H}x), (u,_{_H}z) \big)
= \mathfrak{l}(t,x,z)=\mathfrak{l}(h^\prime h,x,z)
= \mathfrak{l}(h^\prime,y,z)\mathfrak{l}(h,x,y)$$
$$= (G\times_H\mathfrak{l})\big(g^\prime, (u,_{_H}x), (u,_{_H}y) \big)
(G\times_H\mathfrak{l})\big(g, (u,_{_H}y), (u,_{_H}z) \big).
$$
We also have $(G\times_H\mathfrak{l})\big(1,(u,_{H}x),(u,_{_H}x)\big)=1$
for any $(u,_{_H}x) \in G\times_HX$. Thus $G\times_H\mathfrak{l}$ is a functor.
So  $\Ind^G_H(X,\mathfrak{l})$  is a $C$-monomial $G$-poset.\par
Given a $C$-monomial $G$-poset $(Y,\mathfrak{m})$, {\em the restriction} $\Res_H^G(Y,\mathfrak{m})$ of $(Y,\mathfrak{m})$
is the pair $(\Res^G_HY, \ssres^G_H\mathfrak{m})$
where $\Res^G_HY$ is the restriction of the $G$-poset $Y$ to $H$-poset and
$\ssres^G_H\mathfrak{m}$ is the restriction of the functor $\mathfrak{m}$ from 
$\widehat{Y}$ to $\widehat{\Res^G_HY}$.
\begin{prop} Let $G$ be a finite group.
\begin{enumerate}
\item If $Y$ is a finite $G$-poset, denote by $1_Y:\widehat{Y}\to \bullet_C$ the trivial functor defined by $1_Y(g,x,y)=1$ for any $g\in G$ and $x,y\in Y$ such that $gx\leq y$. Then the assignment $Y\mapsto (Y,1_Y)$ is a functor $\tau_G$ from $G\hbox{-\sf poset}$ to $_CMG\hbox{-\sf poset}$.
\item Let $H$ be a subgroup of $G$. The assignment $(X,\mathfrak{l})\mapsto \Ind_H^G(X,\mathfrak{l})$ is a functor $\Ind_H^G:{_CMH\hbox{-\sf poset}}\to{_CMG\hbox{-\sf poset}}$, and the assignment $(Y,\mathfrak{m})\mapsto \Res_H^G(Y,\mathfrak{m})$ is a functor $\Res_H^G:{_CMG\hbox{-\sf poset}}\to{_CMH\hbox{-\sf poset}}$.
\item Moreover the diagrams 
$$\xymatrix{
H\hbox{-\sf poset}\ar[r]^-{\Ind_H^G}\ar[d]_{\tau_H}&G\hbox{-\sf poset}\ar[d]_{\tau_G}\\
_CMH\hbox{-\sf poset}\ar[r]^-{\Ind_H^G}&_CMG\hbox{-\sf poset}
}\hbox{and}\xymatrix{
G\hbox{-\sf poset}\ar[r]^-{\Res_H^G}\ar[d]_{\tau_G}&H\hbox{-\sf poset}\ar[d]_{\tau_H}\\
_CMG\hbox{-\sf poset}\ar[r]^-{\Res_H^G}&_CMH\hbox{-\sf poset}
}$$
of categories and functors are commutative.
\end{enumerate}
\end{prop}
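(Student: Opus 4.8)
The plan is to handle the three parts in order, supplying in each case the action on morphisms (the behaviour on objects being already established in the text for $\Ind_H^G$, and immediate for $\tau_G$ and $\Res_H^G$) and then verifying the functoriality axioms.

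For part (1), I define $\tau_G$ on a morphism $f\colon Y\to Y'$ of $G$-posets by $\tau_G(f)=(f,\id)$, where $\id\colon 1_Y\to 1_{Y'}\circ f$ is the identity natural transformation; this is legitimate because $1_{Y'}\circ f$ and $1_Y$ are both the constant functor at $1\in C$. Naturality of $\id$ is vacuous, and preservation of identities and composites is clear, so $\tau_G$ is a functor.

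For part (2), consider first $\Ind_H^G$. Given a map $(f,\lambda)\colon(X,\mathfrak{l})\to(X',\mathfrak{l}')$ of $C$-monomial $H$-posets I set $\Ind_H^G(f,\lambda)=(G\times_Hf,\,G\times_H\lambda)$, where $(G\times_Hf)(u,_{_H}x)=(u,_{_H}f(x))$ and $(G\times_H\lambda)_{(u,_{_H}x)}=\lambda_x$. The map $G\times_Hf$ is clearly $G$-equivariant, and it is order-preserving since $(u,_{_H}x)\le(v,_{_H}y)$ produces $h\in H$ with $u=vh$ and $hx\le y$, whence $hf(x)=f(hx)\le f(y)$. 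For the naturality of $G\times_H\lambda$, take a morphism $g\colon(u,_{_H}x)\to(v,_{_H}y)$ in $\widehat{G\times_HX}$, so that $h:=v^{-1}gu\in H$ and $hx\le y$; the naturality square then reduces to
$$\mathfrak{l}'\big(h,f(x),f(y)\big)\,\lambda_x=\lambda_y\,\mathfrak{l}(h,x,y),$$
which is exactly the naturality of $\lambda$ for the $H$-morphism $h\colon x\to y$. Preservation of identities is immediate, and preservation of composition reduces, just as in the object-level computation already carried out for $G\times_H\mathfrak{l}$, to the functoriality of $\mathfrak{l}$. The restriction functor is easier: I put $\Res_H^G(f,\lambda)=(\Res_H^Gf,\,\Res_H^G\lambda)$ with the same components $\lambda_x$, and naturality and functoriality are inherited simply by restricting the morphism sets of the transporter category from $G$ to $H$.

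For part (3), I verify each square on objects and on morphisms. On an $H$-poset $X$, the two routes give $(G\times_HX,\,G\times_H1_X)$ and $(G\times_HX,\,1_{G\times_HX})$, which agree because $(G\times_H1_X)(g,(u,_{_H}x),(v,_{_H}y))=1_X(h,x,y)=1$; on morphisms both routes return $(G\times_Hf,\id)$. The restriction square is checked identically, using that restricting the trivial functor gives the trivial functor. The one genuinely delicate point, and the place I expect the real care to be needed, is the well-definedness of $G\times_H\mathfrak{l}$ and of $G\times_H\lambda$ on the transporter category: the objects of $G\times_HX$ are $H$-orbits, so the naive assignments $\mathfrak{l}(v^{-1}gu,x,y)$ and $\lambda_x$ depend a priori on the chosen representatives $u,x,v,y$, and indeed changing representatives multiplies $\mathfrak{l}(v^{-1}gu,x,y)$ by boundary factors coming from $\mathfrak{l}$. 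I would remove this ambiguity by fixing once and for all a transversal $[G/H]$ of $G/H$ and writing every element of $G\times_HX$ uniquely as $(u,_{_H}x)$ with $u\in[G/H]$; with these canonical representatives the two assignments are unambiguous and the verifications above go through verbatim. Everything else is routine bookkeeping.
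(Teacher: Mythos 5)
Your proposal follows the paper's proof essentially verbatim: the same definition $\tau_G(f)=(f,\id)$ with identity components, the same $\Ind_H^G(f,\lambda)=(G\times_Hf,\,G\times_H\lambda)$ with $(G\times_H\lambda)_{(u,_{_H}x)}=\lambda_x$, the same naturality computation reducing the square for $g$ to the naturality of $\lambda$ at the element $h\in H$ with $gu=vh$ and $hx\leq y$, restriction by restricting everything, and the same object-level verification of the two commutative squares. The one place you go beyond the paper is the closing well-definedness remark, and it is a correct and genuinely useful addition rather than a divergence: the paper tacitly writes every morphism of $\widehat{G\times_HX}$ with a common first coordinate $u$ in source and target and never addresses the choice of orbit representatives, whereas, as you observe, replacing $(u,x)$ by $(uh_1^{-1},h_1x)$ and $(v,y)$ by $(vh_2^{-1},h_2y)$ multiplies the putative value $\mathfrak{l}(h,x,y)$ by the boundary factors $\mathfrak{l}(h_2,y,h_2y)\,\mathfrak{l}(h_1^{-1},h_1x,x)$, so fixing a transversal of $G/H$ (hence canonical representatives, making $h=v^{-1}gu$ unique) is exactly the right repair, with different transversals yielding canonically isomorphic $C$-monomial $G$-posets. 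One small imprecision: preservation of composition by $\Ind_H^G$ is just the componentwise identity $(\beta\circ\lambda)_x=\beta_{f(x)}\lambda_x$ together with $(G\times_Ht)\circ(G\times_Hf)=G\times_H(t\circ f)$, and does not rest on functoriality of $\mathfrak{l}$ --- that is what makes $G\times_H\mathfrak{l}$ itself a functor, which is the object-level computation already in the text; this slip of attribution does not affect correctness.
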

\begin{proof}
\begin{enumerate}
	\item Let $f: X \rightarrow Y$ be a map of $G$-posets. We set 
	$$\tau_G(f)= (f,1_f): (X,1_X) \rightarrow (Y,1_Y),$$
	where $1_f: 1_X \rightarrow 1_Y\circ f$ is defined by
	${1_f}_x= 1$ for any $x \in X$. Obviously $(f,1_f)$ is a map of $C$-monomial
	$G$-posets and $\tau_G$ is a functor.
	\item Let $(f,\lambda):(X,\mathfrak{l}) \rightarrow (Y,\mathfrak{m})$ be a map of $C$-monomial $H$-posets. 
	 We set the pair 
	 $$\Ind^G_H(f,\lambda)=(G\times_Hf,G\times_H\lambda): (G\times_HX, G\times_H\mathfrak{l}) \rightarrow (G\times_HY, G\times_H\mathfrak{m})$$
	 where 
	 $$G\times_Hf : G\times_HX \rightarrow G\times_HY$$
	 is defined by
	 $(G\times_Hf)(u,_{_H}x)= \big(u,_{_H}f(x)\big)$ and
	 $$G\times_H\lambda :  G\times_H\mathfrak{l} 
	 \rightarrow (G\times_H\mathfrak{m})\circ(G\times_Hf)$$ 
	 is defined by $(G\times_H\lambda)_{(u,_{_H}x)}= \lambda_x$ for any $(u,_{_H}x) \in G\times_HX$. 
	 It's clear that $G\times_Hf$ is a map of $C$-monomial $G$-posets. Now we show that 
	 $G\times_H\lambda$ is a natural transformation. Let $(u,_{_H}x),\,(u,_{_H}y) \in G\times_HX$
	 such that $g(u,_{_H}x)\leq (u,_{_H}y)$ for some $g\in G.$ Then $gu=uh$ and $hx\leq y$
	 for some $h \in H$. Since $\lambda: \mathfrak{l} \rightarrow \mathfrak{m}\circ f$ is
	 a natural transformation, we get
	 $$ (G\times_H\mathfrak{m})\Big(g,\big(u,_{_H}f(x)\big),\big(u,_{_H}f(y)\big)\Big)
	 (G\times_H\lambda)_{(u,_{_H}x)}= \mathfrak{m}\big(h,f(x),f(y)\big)\lambda_x$$
	 $$=\lambda_y\mathfrak{l}(h,x,y)= 
	 {\rouge (G\times_H\lambda)_{(u,_{_H}y)}}(G\times_H\mathfrak{l})\big(g,(u,_{_H}x),(u,_{_H}y)\big).$$
	 Now consider $(\id_X,\id_{\mathfrak{l}}): (X,\mathfrak{l}) \rightarrow (X,\mathfrak{l})$
	 where $\id_X: X \rightarrow X$ is the identity map on the $H$-set $X$ and
	 $\id_{\mathfrak{l}}:\mathfrak{l} \rightarrow \mathfrak{l}\circ\id_X$ is the identity transformation.
	 Then we get
	 $$\Ind^G_H(\id_X,\id_{\mathfrak{l}})= (\id_{G\times_HX},\id_{G\times_H\mathfrak{l}}).$$
	 Now let $(f,\lambda): (X,\mathfrak{l}) \rightarrow (Y,\mathfrak{m})$ and
	 $(t,\beta): (Y,\mathfrak{m})\rightarrow (Z,\mathfrak{r})$ be the maps of $C$-monomial $H$-posets. We obviously have 
	 $$(G\times_Ht)\circ(G\times_Hf)= G\times_H(t\circ f)$$ 
	 and 
	 $$(G\times_H\beta)\circ(G\times_H\lambda)= G\times_H(\beta\circ\lambda).$$
	 Thus, 
	 $$\Ind^G_H(t,\beta)\circ\Ind^G_H(f,\lambda)= \Ind^G_H\big((t,\beta)\circ(f,\lambda)\big).$$ 
	 So $\Ind^G_H:{_CMH\hbox{-\sf poset}}\to{_CMG\hbox{-\sf poset}}$ is a functor.\par
	 Now let $(f,\lambda): (X,\mathfrak{l})\rightarrow (Y,\mathfrak{m})$ be 
	 a map of $C$-monomial $G$-posets. We set the pair
	 $$\Res^G_H(f,\lambda)=(f|_H,\lambda|_H): (\Res^G_HX,\ssres^G_H\mathfrak{l})
	 \rightarrow (\Res^G_HY,\ssres^G_H\mathfrak{m})$$
	 where $f|_H: \Res^G_HX \rightarrow \Res^G_HY$ is defined as the restriction of map of $G$-posets $f$ to map of $H$-posets and 
	 $\lambda|_H : \ssres^G_H\mathfrak{l} \rightarrow \ssres^G_H\mathfrak{m}\circ f|_H$
	 is defined as the restriction of $\lambda$. Clearly, we get that
	 $\Res_H^G:{_CMG\hbox{-\sf poset}}\to{_CMH\hbox{-\sf poset}}$ is a functor.
	 \item Let $X$ be an $H$-poset. Commmutativity of the first diagram follows from
	 $$\tau_G\circ\Ind^G_H(X)= \tau_G(G\times_HX)= (G\times_HX,1_{G\times_HX})
	 =\Ind^G_H(X,1_X)=\Ind^G_H\circ\tau_H(X).$$
	 Now let $Y$ be a $G$-poset. Commutativity of the second diagram follows from
	 $$\tau_H\circ\Res^G_H(Y)=\tau_H(\Res^G_HY)=(\Res^G_HY,1_{\Res^G_HY})$$
	 $$=(\Res^G_HY,\ssres^G_H1_Y)=\Res^G_H(Y,1_Y)=\Res^G_H\circ\tau_H(Y).$$
	 
\end{enumerate}
\end{proof}
\begin{prop} Let $G$ be a finite group and $H$ be subgroup of $G$. Then the functor $\Ind_H^G:{_CMH\hbox{-\sf poset}}\to{_CMG\hbox{-\sf poset}}$ is left adjoint to the functor $(Y,\mathfrak{m})\mapsto \Res_H^G(Y,\mathfrak{m})$.
\end{prop}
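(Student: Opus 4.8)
The plan is to exhibit, naturally in both variables, a bijection between hom-sets
$$\Theta\colon \Hom_{{_CMG\hbox{-\sf poset}}}\big(\Ind_H^G(X,\mathfrak l),(Y,\mathfrak m)\big)\longrightarrow \Hom_{{_CMH\hbox{-\sf poset}}}\big((X,\mathfrak l),\Res_H^G(Y,\mathfrak m)\big),$$
for a $C$-monomial $H$-poset $(X,\mathfrak l)$ and a $C$-monomial $G$-poset $(Y,\mathfrak m)$. Since the previous proposition already establishes that $\Ind_H^G$ and $\Res_H^G$ are functors, and since the adjunction between induction and restriction of plain $G$-posets is classical (as in \cite{LEF}), the only new content lies in the monomial layer, i.e.\ in how the natural transformations transport under $\Theta$. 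I would define $\Theta$ and its inverse by explicit formulas and then check that they land in the correct hom-sets, are mutually inverse, and are natural.

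For the explicit maps: given a morphism $(F,\Lambda)\colon \Ind_H^G(X,\mathfrak l)\to(Y,\mathfrak m)$, I restrict to the base block $u=1$ and set $f(x)=F(1,_{_H}x)$ and $\lambda_x=\Lambda_{(1,_{_H}x)}$, putting $\Theta(F,\Lambda)=(f,\lambda)$. Conversely, given $(f,\lambda)\colon(X,\mathfrak l)\to\Res_H^G(Y,\mathfrak m)$, I set $F(u,_{_H}x)=u\,f(x)$ together with
$$\Lambda_{(u,_{_H}x)}=\mathfrak m\big(u,f(x),uf(x)\big)\,\lambda_x,$$
where $\mathfrak m(u,f(x),uf(x))$ is the value of $\mathfrak m$ on the isomorphism $u\colon f(x)\to uf(x)$ of $\widehat Y$. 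This twisting factor is the essential new ingredient: translating by $u\in G$ moves $f(x)$ to $uf(x)$, and the monomial datum $\mathfrak m$ records a possibly nontrivial element of $C$ along this translation, which must be inserted for $\Lambda$ to be natural. Indeed, naturality of $\Lambda$ along the morphism $u\colon(1,_{_H}x)\to(u,_{_H}x)$, combined with $(G\times_H\mathfrak l)(u,(1,_{_H}x),(u,_{_H}x))=\mathfrak l(1,x,x)=1$, forces precisely this formula, so there is no freedom in the choice.

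I would then carry out the verifications in the following order. First, that $\Theta(F,\Lambda)=(f,\lambda)$ is a morphism of $C$-monomial $H$-posets: $f$ is an $H$-poset map by the classical computation $f(hx)=F(h,_{_H}x)=hf(x)$, and the naturality of $\lambda$ is exactly the naturality of $\Lambda$ read on the morphisms $h\colon(1,_{_H}x)\to(1,_{_H}y)$ with $hx\le y$, using $(G\times_H\mathfrak l)(h,(1,_{_H}x),(1,_{_H}y))=\mathfrak l(h,x,y)$. Second, that $\Theta^{-1}(f,\lambda)=(F,\Lambda)$ is a morphism of $C$-monomial $G$-posets: $F$ is a $G$-poset map classically, and for the naturality of $\Lambda$ along a morphism $g\colon(u,_{_H}x)\to(v,_{_H}y)$ one writes $gu=vh$ with $hx\le y$, so that $(G\times_H\mathfrak l)(g,\cdot)=\mathfrak l(h,x,y)$, and then inserts the isomorphisms $u\colon f(x)\to uf(x)$ and $v\colon f(y)\to vf(y)$ into the naturality square, using the cocycle identity (functoriality) of $\mathfrak m$, the $H$-equivariance of $f$, and naturality of $\lambda$ to collapse $\mathfrak m(g,uf(x),vf(y))\,\mathfrak m(u,f(x),uf(x))$ into $\mathfrak m(v,f(y),vf(y))\,\mathfrak m(h,f(x),f(y))$. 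Third, that $\Theta$ and $\Theta^{-1}$ are mutually inverse: $\Theta\circ\Theta^{-1}=\id$ is immediate from $\mathfrak m(1,f(x),f(x))=1$, while $\Theta^{-1}\circ\Theta=\id$ follows from the $G$-equivariance $F(u,_{_H}x)=uF(1,_{_H}x)$ together with the naturality of $\Lambda$ along $u\colon(1,_{_H}x)\to(u,_{_H}x)$, which recovers $\Lambda_{(u,_{_H}x)}=\mathfrak m(u,f(x),uf(x))\,\Lambda_{(1,_{_H}x)}$. Finally, naturality of $\Theta$ in $(X,\mathfrak l)$ and in $(Y,\mathfrak m)$ is routine from the formulas.

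The main obstacle is the second step, namely showing that $\Lambda$ is a genuine natural transformation and is independent of the chosen coset representative. The difficulty is entirely bookkeeping with the monomial data: because translation by group elements alters the fibre through $\mathfrak m$, one must verify that the twist $\mathfrak m(u,f(x),uf(x))$ reconciles correctly across different cosets, and this reduces in each case to the cocycle identity for $\mathfrak m$ applied to the appropriate square in $\widehat Y$ — the same identity that underlies the well-definedness of $\Ind_H^G$ established earlier.
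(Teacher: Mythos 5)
Your proposal is correct and follows essentially the same route as the paper: your $\Theta$ and $\Theta^{-1}$ are exactly the paper's maps $\varphi$ and $\theta$, with $\varphi(f)(x)=f(1,_{_H}x)$, $\varphi(\lambda)_x=\lambda_{(1,_{_H}x)}$, and the same twisting factor $\mathfrak{m}\big(u,f(x),uf(x)\big)$ in the inverse direction, verified by the same naturality computation collapsing $\mathfrak{m}\big(g,uf(x),uf(y)\big)\mathfrak{m}\big(u,f(x),uf(x)\big)$ via the cocycle identity for $\mathfrak{m}$. The only difference is that you spell out the mutual-inverse check (via naturality of $\Lambda$ along the morphism $u\colon(1,_{_H}x)\to(u,_{_H}x)$, whose value under $G\times_H\mathfrak{l}$ is $\mathfrak{l}(1,x,x)=1$) and the forced uniqueness of the twist, which the paper dismisses as clear.
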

\begin{proof} We prove that for any $C$-monomial $H$-poset $(X,\mathfrak{l})$ and any $C$-monomial $G$-poset $(Y,\mathfrak{m})$ we have a bijection
	 $$\Hom_{_CMG}\big(\Ind^G_H(X,\mathfrak{l}),(Y,\mathfrak{m})\big)
		\cong \Hom_{_CMH}\big((X,\mathfrak{l}), \Res^G_H(Y,\mathfrak{m})\big)$$
natural in $(X,\mathfrak{l})$ and $(Y,\mathfrak{m})$.  

	We define 
	$$\varphi: \Hom_{_CMG}\big(\Ind^G_H(X,\mathfrak{l}),(Y,\mathfrak{m})\big) \rightarrow \Hom_{_CMH}\big((X,\mathfrak{l}), \Res^G_H(Y,\mathfrak{m})\big)$$

   where
	$$\varphi: (f,\lambda) \mapsto \big(\varphi(f),\varphi(\lambda)\big)$$
	such that 
	$$\varphi(f): X \rightarrow \Res^G_H(Y)$$ 
	defined by
	$\varphi(f)(x)=f(1,_{_H}x)$ and 
	$$\varphi(\lambda):\mathfrak{l} \rightarrow \ssres\mathfrak{m}\circ \varphi(f)$$
	defined by 
	$\varphi(\lambda)_x=\lambda_{(1,_{_H}x)}$
	for any $x \in X$. Obviously, $\varphi(f)$ is a map of
	$H$-posets. We need to show that 
	$$\varphi(\lambda): \mathfrak{l} \rightarrow 
	\ssres\mathfrak{m}\circ \varphi(f)$$
	is a natural transformation. 
	Let $x,\,y \in X$ such that $gx\leq y$ for some $g \in G$. Then
	
	\begin{align*}
	\mathfrak{m}\big(h,&\varphi(f)(x), \varphi(f)(y)\big)\varphi(\lambda)_x
	=\mathfrak{m}\big(h, f(1,_{_H}x), f(1,_{_H}y)\big)
	\lambda_{(1,_{_H}x)}\\
	&= \lambda_{(1,_{_H}y)}\mathfrak{l}(h,x,y)= 
	\varphi(\lambda)_{y}
	(G\times_H\mathfrak{l})\big(h,(1,_{_H}x),(1,_{_H}y)\big).\\
	\end{align*}
	We define an inverse map to $\varphi$ as
		$$\theta: \Hom_{_CMH}\big((X,\mathfrak{l}), \Res^G_H(Y,\mathfrak{m})\big) 
	\rightarrow \Hom_{_CMG}\big(\Ind^G_H(X,\mathfrak{l}),(Y,\mathfrak{m})\big)$$
	where 
	 $$\theta : (\psi, \beta) \mapsto  \big(\theta(\psi),{\rouge\theta(\beta)}\big)$$
	such that 
	{\magenta $$\theta(\psi): G\times_HX \rightarrow Y$$
	defined as $\theta(\psi)(u,_{_H}x)=u\psi(x)$ and
        $$\theta(\beta): G\times_H\mathfrak{l}\rightarrow \mathfrak{m}\circ\theta(\psi)$$
    defined as }
	$$\theta(\beta)_{(u,_{_H}x)}=
	\mathfrak{m}\big(u,\psi(x),u\psi(x)\big)\beta_x$$
	for any $(u,_{_H}x) \in G\times_HX.$ Obviously, the map $\theta(\psi)$
	is {a map of $G$-posets}. We need to show that {\rouge $\theta(\beta)$} is
	a natural transformation. Let 
	$(u,_{_H}x),\,(u,_{_H}y)\in G\times_HX$ such that 
	$g(u,_{_H}x)\leq (u,_{_H}y)$ for some $g\in G$. 
	Then there exists some $h \in H$ such that 
	{\rouge $gu=uh$ and $hx\leq y$.} Now, we have
	\begin{align*}
	\mathfrak{m}&\big(g, \theta(\psi)(u,_{_H}x), \theta(\psi)(u,_{_H}y)\big){\rouge\theta(\beta)_{(u,_{_H}x)}}
     = \mathfrak{m}\big(g,u\psi(x),u\psi(y)\big)
	\mathfrak{m}\big(u,\psi(x),u\psi(x)\big)\beta_x\\
	&= \mathfrak{m}\big(u,\psi(y),u\psi(y)\big)
	\mathfrak{m}\big(h,\psi(x),\psi(y)\big) \beta_x
	= {\rouge \mathfrak{m}\big(u,\psi(y),u\psi(y)\big)}\beta_{y}
	\mathfrak{l}(h,x,y)\\
	&{\rouge =\theta(\beta)_{(u,_{_H}y)}(G\times_H\mathfrak{l})\big(h,(u_{_H}x),(u,_{_H}y)\big).}\\
	\end{align*}
  Clearly, $\varphi$ and $\theta$ are mutual inverse maps, and natural in $(X,\mathfrak{l})$ and $(Y,\mathfrak{m})$.	
	
\end{proof}

\subsubsection{The Lefschetz invariant attached to a $C$-monomial $G$-poset}
Let $(X,  \mathfrak{l})$ be a $C$-monomial $G$-poset. 
The {\em Lefschetz invariant} $\Lambda_{(X,  \mathfrak{l})}$ of $(X,  \mathfrak{l})$ 
 is the element of $B_{C}(G)$ defined by

$$\Lambda_{(X,  \mathfrak{l})} = 
\sum\limits_{x_0 < ... < x_n \in_G X}
(-1)^n \big[{\rouge G_{x_0, ..., x_n}}, \Res^{G_{x_0}}_{G_{x_0, ..., x_n}}( \mathfrak{l}_{x_0})\big]_G$$
where $x_0 < ... < x_n$ runs over $G$-representatives of the 
chains in $X$. {\dgreen The group }{\rouge $G_{x_0, ..., x_n}$ is the stabilizer of the set $\{x_0, ..., x_n\}$,} {\dgreen that is} {\rouge $G_{x_0, ..., x_n}= \cap_{i=0}^nG_{x_i}$.} {\dgreen Here $\Res^{G_{x_0}}_{G_{x_0, ..., x_n}}( \mathfrak{l}_{x_0})$ denotes the restriction of the character $\mathfrak{l}_{x_0}$ introduced in Remark~\ref{lx}. Observe that i}f $x_0 < ... < x_n$  is a chain in $X$ for some $n\in \NN$, by Remark \ref{res}
we have 
$$\Res^{G_{x_0}}_{G_{x_0, ..., x_n}}\mathfrak{l}_{x_0}=
\Res^{G_{x_i}}_{G_{x_0, ..., x_n}}\mathfrak{l}_{x_i}$$
for any $0\leq i \leq n $.\par
Let $(X,\mathfrak{l})$ be a $C$-monomial $G$-poset. Given $n\in \NN$, let $\Sd_n(X)$
denote the set of chains in $X$ with order $n+1$. Obviously, the set $\Sd_n(X)$
is a $G$-set. Then $(\Sd_n(X),\mathfrak{l}_n)$ is a $C$-monomial $G$-set where 
$\mathfrak{l}_n: \widehat{\Sd_n(X)} \rightarrow \bullet_C$ is the functor defined by
$$\mathfrak{l}_n(g, x_0<...<x_n, y_0<...<y_n)= \mathfrak{l}(g,x_0,y_0)$$
for any $x_0<...<x_n$, and $y_0<...<y_n$ in $\Sd_n(X)$ such that
$$g(x_0<...<x_n)=  y_0<...<y_n$$
for some $g\in G$.
\begin{rem}\label{chn}Given a $C$-monomial $G$-{\dgreen poset} $(X,\mathfrak{l})$, {\dgreen
	we have the following isomorphism of monomial $G$-sets:}
	$$\big(\Sd_n(X),\mathfrak{l}_n\big)\cong
	\bigsqcup\limits_{x_0<...<x_n\in_G \Sd_n(X)}
	\big(G/G_{x_0,...,x_n},{\dgreen \widehat{\Res^{G_{x_0}}_{G_{x_0,...,x_n}}(\mathfrak{l}_{x_0})}}\big)$$
	for any $n \in \NN$.
\end{rem}
\begin{proof}
	Let $[G/\Sd_n(X)]$ be a set of representative of the $G$-action on $\Sd_n(X)$. 
	Let $x=x_0<...<x_n$ be a chain in $\Sd_n(X)$ then there exist some $g_x\in G$ and a unique 
	$\sigma_{x} \in [G/\Sd_n(X)]$ such that $x=g_x\sigma_{x}$ where 
	$\sigma_x= \sigma_{x_0}<...<\sigma_{x_n}$. 
	We define 
	$$(f,\lambda): \big(\Sd_n(X),\mathfrak{l}_n\big) 
	\rightarrow \bigsqcup\limits_{x_0<...<x_n\in_G\Sd_n(X)}
	\big(G/G_{x_0,...,x_n},{\dgreen \widehat{\Res^{G_{x_0}}_{G_{x_0,...,x_n}}(\mathfrak{l}_{x_0})}}\big)$$
	where {\rouge $f(x)=g_xG_{\sigma_x} \in G/G_{\sigma_x}$} and $\lambda_x= \mathfrak{l}(g_x^{-1},g_x\sigma_{x_0}, \sigma_{x_0})$. 
	Obviously, 
	$$f: \Sd_n(X)\rightarrow \bigsqcup\limits_{x_0<...<x_n\in_G\Sd_n(X)}G/G_{x_0,...,x_n}$$ 
	is an isomorphism of $G$-sets. We show that 
	$$\lambda: \mathfrak{l}_n \rightarrow \!\!\!\!\!\!\bigsqcup\limits_{x_0<...<x_n\in_GSd_n(X)}
	\!\!\!\!\!\!
	{\dgreen \widehat{\Res^{G_{x_0}}_{G_{x_0,...,x_n}}(\mathfrak{l}_{x_0})}}\circ f$$
	is a natural transformation. 	Let $x= x_0<...<x_n$, and $y=  y_0<...<y_n$ be sequences 
	in $\Sd_n(X)$
	such that $gx=y$ for some $g \in G$. There exist a unique 
	$\sigma_x,\sigma_y \in [G/\Sd_n(X)]$ such that $x=g_x\sigma_x$ and $y=g_y\sigma_y$
	for some $g_x$ and $g_y$ in $G$. Then $x_0=g_x\sigma_{x_0}$ and $y_0=g_y\sigma_{y_0}$
	so $y_0=gx_0= gg_x\sigma_{x_0}$. Thus, by uniqueness $\sigma_{x_0}=\sigma_{y_0}$ and so
	$g_y^{-1}gg_x \in G_{\sigma_{x_0}}$. Then setting $r={\dgreen \widehat{\Res^{G_{x_0}}_{G_x}(\mathfrak{l}_{x_0})}}\big(g,f(x),f(y)\big)\lambda_x$, we have that

	\begin{align*}
		r&= \widehat{\mathfrak{l}_{x_0}}(g,g_xG_{\sigma_{x}}, g_yG_{\sigma_{y}})\mathfrak{l}(g_x^{-1},g_x\sigma_{x_0}, \sigma_{x_0})
	= \mathfrak{l}_{x_0}(g_y^{-1}gg_x)\mathfrak{l}(g_x^{-1},g_x\sigma_{x_0}, \sigma_{x_0})\\
	&=\mathfrak{l}(g_y^{-1}gg_x,x_0,x_0) \mathfrak{l}(g_x^{-1},g_x\sigma_{x_0}, \sigma_{x_0})\\
	&=\mathfrak{l}(g_x,\sigma_{x_0},g_x\sigma_{x_0})\mathfrak{l}(g,g_x\sigma_{x_0},gg_x\sigma_{x_0})
	\mathfrak{l}(g_y^{-1},gg_x\sigma_{x_0},\sigma_{x_0})
	\mathfrak{l}(g_x^{-1},g_x\sigma_{x_0}, \sigma_{x_0})\\
	&=\mathfrak{l}(g,x_0,y_0)\mathfrak{l}(g_y^{-1},g_y\sigma_{y_0},\sigma_{y_0})\\
	&= \mathfrak{l}_n(g,x,y)\lambda_y.\\
	\end{align*}
\end{proof}

By Remark \ref{chn}, the Lefschetz invariant of a $C$-monomial $G$-set 
$(X,\mathfrak{l})$ can be written~as
$$\Lambda_{(X,\mathfrak{l})}= \sum\limits_{x_0 < ... < x_n \in_G X}
(-1)^n \big[G_{x_0, ... x_n}, \Res^{G_{x_0}}_{G_{x_0, ..., x_n}}( \mathfrak{l}_{x_0})\big]_G
= \sum\limits_{n \in \NN}(-1)^n \big(\Sd_n(X),\mathfrak{l}_n\big).$$
 It follows that $\Lambda_X=\Lambda_{\tau_G(X)}$, where $\Lambda_X$ the Lefschetz invariant of the $G$-poset $X$ introduced in \cite{BIS}.\par
We define similarly the {\em reduced Lefschetz invariant} of $(X,\mathfrak{l})$
$$\widetilde{\Lambda}_{(X,\mathfrak{l})}= \Lambda_{(X,  \mathfrak{l})}-[G,1_G]_G$$
where $1_G$ is the trivial character of $G$.

\begin{lemma}\label{lefprp}Let $G$ be a finite group and $C$ be an abelian group.
	\begin{enumerate}
		\item Let $(X,\mathfrak{l})$ be a $C$-monomial $G$-set, viewed a a $C$-monomial $G$-poset ordered by the equality relation on $X$. Then 
		   $\Lambda_{(X, \mathfrak{l})}= [C\times_{\mathfrak{l}}X]$ in $B_C(G)$.
		
		\item Let  $(X, \mathfrak{l})$ and $(Y, \mathfrak{m})$
		 be $C$-monomial $G$-posets. 
		Then $\Lambda_{(X\sqcup Y, \mathfrak{r})}= 
		\Lambda_{(X, \mathfrak{l})}+ \Lambda_{(Y, \mathfrak{m})}$
		 in $B_C(G)$.
		
		\item Given $C$-monomial $G$-posets $(X, \mathfrak{l})$
		 and $(Y, \mathfrak{m})$,
		 we have $\Lambda_{(X \times Y, \mathfrak{l} \times \mathfrak{m})}
		 = \Lambda_{(X, \mathfrak{l})}\Lambda_{(Y, \mathfrak{m})}$ in $B_C(G).$
		
	\end{enumerate}
	
\end{lemma}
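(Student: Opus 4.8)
The plan is to work throughout with the expression
$$\Lambda_{(X,\mathfrak{l})}=\sum_{n\in\NN}(-1)^n\big[\Sd_n(X),\mathfrak{l}_n\big]$$
established just above the statement, together with the fact, proved in the Proposition, that the isomorphism $\widehat{F}:B^1_C(G)\to B_C(G)$ carries the class of a $C$-monomial $G$-set to its image in $B_C(G)$, sends disjoint unions to sums, and sends products of $C$-monomial $G$-sets to products. Each assertion then reduces to describing how $\Sd_n$ and the functors $\mathfrak{l}_n$ behave under the operation in question.

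For (1), when $X$ carries the equality order its only chains are the one-element chains, so $\Sd_0(X)=X$, $\Sd_n(X)=\emptyset$ for $n\geq1$, and $\mathfrak{l}_0=\mathfrak{l}$. Hence $\Lambda_{(X,\mathfrak{l})}=\big[\Sd_0(X),\mathfrak{l}_0\big]=[X,\mathfrak{l}]$, which is $[C\times_\mathfrak{l}X]$ by definition of $\widehat{F}$. For (2), since no element of $X$ is comparable with any element of $Y$ inside $X\sqcup Y$, every chain lies entirely in one summand; this gives isomorphisms of $C$-monomial $G$-sets $\big(\Sd_n(X\sqcup Y),\mathfrak{r}_n\big)\cong\big(\Sd_n(X),\mathfrak{l}_n\big)\sqcup\big(\Sd_n(Y),\mathfrak{m}_n\big)$ for all $n$, and summing with signs while using additivity of $\widehat{F}$ yields the claim.

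The content is in (3). First I would establish, for each $n$, an isomorphism of $C$-monomial $G$-sets
$$\big(\Sd_n(X\times Y),(\mathfrak{l}\times\mathfrak{m})_n\big)\cong\bigsqcup_{p,q\in\NN}\ \bigsqcup_{\mathcal P(p,q,n)}\big(\Sd_p(X),\mathfrak{l}_p\big)\times\big(\Sd_q(Y),\mathfrak{m}_q\big),$$
where $\mathcal P(p,q,n)$ is the finite set, with trivial $G$-action, of monotone lattice paths from $(0,0)$ to $(p,q)$ with $n$ steps, each step going East, North, or diagonally Northeast. The underlying bijection sends a strict chain $(x_0,y_0)<\cdots<(x_n,y_n)$ to the strict chain of its distinct first coordinates, the strict chain of its distinct second coordinates, and the path recording at each step which coordinates strictly increase; this is the classical bijection used for ordinary Lefschetz invariants. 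It is $G$-equivariant because $G$ acts coordinatewise and fixes the combinatorial path, and it matches the monomial data because both $(\mathfrak{l}\times\mathfrak{m})_n$ and $\mathfrak{l}_p\times\mathfrak{m}_q$ read off only the smallest element $(x_0,y_0)$ of the chain, whose two coordinates are the smallest elements of the two projected chains.

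Writing $N(p,q,n)=\#\mathcal P(p,q,n)$ and passing to $B_C(G)$, where product of $C$-monomial $G$-sets is multiplication, this gives
$$\big[\Sd_n(X\times Y),(\mathfrak{l}\times\mathfrak{m})_n\big]=\sum_{p,q}N(p,q,n)\,\big[\Sd_p(X),\mathfrak{l}_p\big]\cdot\big[\Sd_q(Y),\mathfrak{m}_q\big].$$
Summing over $n$ with signs, the statement reduces to the numerical identity $\sum_n(-1)^nN(p,q,n)=(-1)^{p+q}$, which I would obtain from the generating function $\sum_{p,q,n}N(p,q,n)x^py^qz^n=\big(1-z(x+y+xy)\big)^{-1}$ (each step contributes $xz$, $yz$, or $xyz$); its specialization at $z=-1$ equals $\big(1+x+y+xy\big)^{-1}=(1+x)^{-1}(1+y)^{-1}=\sum_{p,q}(-1)^{p+q}x^py^q$, and $N(p,q,n)$ has finite support in $n$, so this is legitimate. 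Thus $\Lambda_{(X\times Y,\mathfrak{l}\times\mathfrak{m})}=\sum_{p,q}(-1)^{p+q}\big[\Sd_p(X),\mathfrak{l}_p\big]\big[\Sd_q(Y),\mathfrak{m}_q\big]=\Lambda_{(X,\mathfrak{l})}\Lambda_{(Y,\mathfrak{m})}$. I expect the main obstacle to be the third paragraph: carefully tracking the $C$-valued functor through the chain/path identification to confirm that it is an isomorphism of $C$-monomial $G$-sets and not merely of $G$-sets, since the underlying $G$-set statement is standard.
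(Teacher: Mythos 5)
Your proof is correct, and for the third assertion it takes a genuinely different route from the paper. The paper embeds $B_C(G)$ into $\QQ\otimes_{\ZZ}B_C(G)$, rewrites $\Lambda_{(X\times Y,\mathfrak{l}\times\mathfrak{m})}$ as a rational sum over \emph{all} chains with weights $|G_z|/|G|$, groups chains by their two projections, and separately expands the product $\Lambda_{(X,\mathfrak{l})}\Lambda_{(Y,\mathfrak{m})}$ using the double-coset (Mackey-type) formula for products of basis elements $[V,\nu]_G[W,\omega]_G$, matching the two rational expressions term by term; the sign identity $\sum_n(-1)^n N(i,j,n)=(-1)^{i+j}$ enters there too, but is imported from Lemma 11.2.9 of the cited book rather than proved. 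You instead stay integral: your lattice-path decomposition $\big(\Sd_n(X\times Y),(\mathfrak{l}\times\mathfrak{m})_n\big)\cong\bigsqcup_{p,q}\bigsqcup_{\mathcal{P}(p,q,n)}\big(\Sd_p(X),\mathfrak{l}_p\big)\times\big(\Sd_q(Y),\mathfrak{m}_q\big)$ is an isomorphism at the level of $C$-monomial $G$-sets (your verification that both functors only read the minimal element $(x_0,y_0)$, and that $G$ fixes the path since it acts by order isomorphisms, is exactly the point that needs checking and you check it), after which only the path-count identity is needed, which you prove self-containedly by the generating function $\big(1-z(x+y+xy)\big)^{-1}$ evaluated at $z=-1$, legitimately since $N(p,q,n)$ has finite support in $n$. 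What each approach buys: the paper's computation is short given the known Burnside-ring multiplication formula, but requires torsion-freeness of $B_C(G)$ (to justify the embedding into $\QQ\otimes_{\ZZ}B_C(G)$) and the Mackey expansion; yours avoids denominators and double cosets entirely, works in any Grothendieck-ring quotient of the category of $C$-monomial $G$-sets, and is structurally closer to the paper's own Remark on $\big(\Sd_n(X),\mathfrak{l}_n\big)$. Your treatments of assertions 1 and 2, which the paper dismisses as clear, are fine.
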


\begin{proof} $1.$ and $2.$ are clear.
	
	$3$. In the following proof using the inclusion
	$$B_C(G) \hookrightarrow \QQ\otimes_{\ZZ}B_C(G)$$
	 we identify the elements of $B_C(G)$ with their image
	 in $\QQ\otimes_{\ZZ}B_C(G)$. We start with rearranging 
	 the chains in $X\times Y$ as in the proof of 
	 Lemma 11.2.9 in \cite{BIS}.
	Let $n \in \NN$. Given a chain $z=z_0< ... < z_n$
	 in $X \times Y$ projection of $z$ on $X$ is 
	denoted by $z_X$ and on $Y$ is denoted by $z_Y$. 
	Then $z_X$ is a chain in $X$ with order $i+1$
	 for some $i \leq n$ and $z_Y$ is a chain in $Y$ with 
	 order $j+1$ for some $j \leq n$ {\rouge such that $i+j=n$.} Let $\underline{s}_i$
	  be the chain $s_0< ... < s_i$ and $\underline{t}_j$ 
	  be the chain $t_0< ... < t_j$. Now
	
	\begin{align*}
	\Lambda_{(X \times Y, \mathfrak{l} \times \mathfrak{m})} &= \sum\limits_{\substack{n \in \NN, \\ {\rouge z \in_G \Sd_n(X\times Y)}}}(-1)^n \big[G_z, \Res^{G_{z_0}}_{G_z}(\mathfrak{l}_{z_0})\big]_G= \sum\limits_{\substack{n \in \NN, \\ {\rouge z \in \Sd_n(X\times Y)}}}(-1)^n \frac{|G_z|}{|G|}[G_{z}, \Res^{G_{z_0}}_{G_z}(\mathfrak{l}_{z_0})]_G\\
	&=\sum\limits_{\substack{i,j \in \NN \\ \underline{s}_i \in X \\ \underline{t}_j \in Y }}\Gamma_{\underline{s}_i,\underline{t}_j} \\
	 \end{align*}

	where 
	\begin{align*}
	\Gamma_{\underline{s}_i,\underline{t}_j} &=\!\!\!\!\!\!\!\!\!\!\!\sum\limits_{\substack{ n \in \NN \\ {\rouge z \in \Sd_n(X \times Y)}:z_X=\underline{s}_i,\,z_Y=\underline{t}_j}}\!\!\!\!\!\!\!\!\!\!\!(-1)^n \frac{|G_{\underline{s}_i}\cap G_{\underline{t}_j}|}{|G|}\big[G_{\underline{s}_i}\cap G_{\underline{t}_j}, \Res^{G_{s_0}}_{G_{\underline{s}_i}}(\mathfrak{l}_{s_0}) \Res^{G_{t_0}}_{G_{\underline{t}_j}}(\mathfrak{m}_{t_0})\big]_G\\
	 &= \frac{|G_{\underline{s}_i}\cap G_{\underline{t}_j}|}{|G|}\big[G_{\underline{s}_i}\cap G_{\underline{t}_j}, \Res^{G_{s_0}}_{G_{\underline{s}_i}}(\mathfrak{l}_{s_0}) \Res^{G_{t_0}}_{G_{\underline{t}_j}}(\mathfrak{m}_{t_0})\big]_G\!\!\!\!\!\!\!\!\!\!\!\sum\limits_{\substack{ n \in \NN \\ {\rouge z \in \Sd_n(X \times Y)}:z_X=\underline{s}_i,\,z_Y=\underline{t}_j}}\!\!\!\!\!\!\!\!\!\!\!(-1)^n\\
	 &= \frac{|G_{\underline{s}_i}\cap G_{\underline{t}_j}|}{|G|}\big[G_{\underline{s}_i}\cap G_{\underline{t}_j}, \Res^{G_{s_0}}_{G_{\underline{s}_i}}(\mathfrak{l}_{s_0}) \Res^{G_{t_0}}_{G_{\underline{t}_j}}(\mathfrak{m}_{t_0})\big]_G(-1)^{i+j}.
	\end{align*}
	 
	Now,
	 $$\Lambda_{(X \times Y, \mathfrak{l} \times \mathfrak{m})} =\sum\limits_{\substack{i,j \in \NN \\ \underline{s}_i \in X \\ \underline{t}_j \in Y}} (-1)^{i+j}\frac{|G_{\underline{s}_i}\cap G_{\underline{t}_j}|}{|G|}\big[G_{\underline{s}_i}\cap G_{\underline{t}_j}, \Res^{G_{s_0}}_{G_{\underline{s}_i}}(\mathfrak{l}_{s_0}) \Res^{G_{t_0}}_{G_{\underline{t}_j}}(\mathfrak{m}_{t_0}) \big]_G.$$
		
 On the other hand 
 
 \begin{align*}
 \Lambda_{(X, \mathfrak{l})}\Lambda_{(Y, \mathfrak{m})} &= \sum\limits_{\substack{i \in \NN \\ \underline{s}_i \in_G X }}(-1)
 \big[G_{\underline{s}_i},\Res^{G_{s_0}}_{G_{\underline{s}_i}}(\mathfrak{l}_{s_0})\big]\sum\limits_{\substack{j \in \NN \\ \underline{t}_j \in_G Y }}(-1)^j\big[G_{\underline{t}_j},\Res^{G_{t_0}}_{G_{\underline{t}_j}}(\mathfrak{m}_{t_0})\big]_G\\
 &= \!\!\!\!\sum\limits_{\substack{i,j \in \NN \\ \underline{s}_i \in X \\  \underline{t}_j \in Y \\ G_{^i}gG_{\underline{t}_j} \subseteq G}}\!\!\!\! (-1)^{i+j}\frac{|G_{\underline{s}_i}||G_{\underline{t}_j}|}{|G|^2}\big[G_{\underline{s}_i}\cap {^gG_{\underline{t}_j}}, \Res^{G_{s_0}}_{G_{\underline{s}_i}}(\mathfrak{l}_{s_0}) \Res^{^gG_{t_0}}_{^gG_{\underline{t}_j}}(^g\mathfrak{m}_{t_0})\big]_G\\
 &= \sum\limits_{\substack{i,j \in \NN \\ \underline{s}_i \in X \\  \underline{t}_j \in Y \\ g \in G}} (-1)^{i+j}\frac{|G_{\underline{s}_i}||G_{\underline{t}_j}|}{|G|^2|G_{\underline{s}_i}gG_{\underline{t}_j}|}\big[G_{\underline{s}_i}\cap {^gG_{\underline{t}_j}}, \Res^{G_{s_0}}_{G_{\underline{s}_i}}(\mathfrak{l}_{s_0}) \Res^{^gG_{t_0}}_{^gG_{\underline{t}_j}}(^g\mathfrak{m}_{t_0}) \big]_G\\
 &= \sum\limits_{\substack{i,j \in \NN \\ \underline{s}_i \in X \\  \underline{t}_j \in Y \\ g \in G}} (-1)^{i+j}\frac{|G_{\underline{s}_i}\cap {^gG_{\underline{t}_j}}|}{|G|^2}\big[G_{\underline{s}_i}\cap {^gG_{\underline{t}_j}}, \Res^{G_{s_0}}_{G_{\underline{s}_i}}(\mathfrak{l}_{s_0}) \Res^{^gG_{t_0}}_{^gG_{\underline{t}_j}}(^g\mathfrak{m}_{t_0}) \big]_G\\
 &= \sum\limits_{\substack{i,j \in \NN \\ \underline{s}_i \in X \\  \underline{t}_j \in Y \\ g \in G}} (-1)^{i+j}\frac{|G_{\underline{s}_i}\cap {G_{g\underline{t}_j}}|}{|G|^2}\big[G_{\underline{s}_i}\cap {G_{g\underline{t}_j}}, \Res^{G_{s_0}}_{G_{\underline{s}_i}}(\mathfrak{l}_{s_0}) \Res^{G_{gt_0}}_{G_{g\underline{t}_j}}(\mathfrak{m}_{gt_0}) \big]_G\\
  &= \sum\limits_{\substack{i,j \in \NN \\ \underline{s}_i \in X \\  \underline{t}_j \in Y}} (-1)^{i+j}\frac{|G_{\underline{s}_i}\cap {G_{\underline{t}_j}}|}{|G|}\big[G_{\underline{s}_i}\cap {G_{\underline{t}_j}}, \Res^{G_{s_0}}_{G_{\underline{s}_i}}(\mathfrak{l}_{s_0}) \Res^{G_{t_0}}_{G_{\underline{t}_j}}(\mathfrak{m}_{t_0}) \big]_G.
 \end{align*}
 
 Thus, 	$\Lambda_{(X \times Y,  \mathfrak{l} \times \mathfrak{m})}= \Lambda_{(X, \mathfrak{l})}\Lambda_{(Y, \mathfrak{m})}.$	 
	
\end{proof}

The first assertion of Lemma \ref{lefprp} tells us that
every positive element of {\rouge $B_C(G)$ is} in of the form
$\Lambda_{(X,\mathfrak{l})}$ for some $C$-monomial
$G$-poset $(X,\mathfrak{l})$.
Now consider the poset 
$X= \{a,b,c,d,e\}$ with the ordering
 $\{a\leq c, a\leq d, a \leq e, b\leq c, b\leq d, b\leq e \}$.
  Consider trivial $G$-action on $X$.
Then $\Lambda_{\tau_G(X)}= -1_{B_C(G)}.$
So as a consequence of Lemma \ref{lefprp} we 
get the following corollary.

\begin{cor}\label{lefex}
Any element of the monomial Burnside ring can
 be expressed as the Lefschetz invariant
  of some (non unique) monomial $G$-poset. 
\end{cor}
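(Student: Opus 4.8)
The plan is to reduce the statement entirely to the three properties of the Lefschetz invariant collected in Lemma~\ref{lefprp}, together with the explicit five-element poset computed just before the corollary. First I would take an arbitrary element $z\in B_C(G)$ and expand it in the canonical basis as $z=\sum_{(U,\mu)}n_{(U,\mu)}\,[U,\mu]_G$ with $n_{(U,\mu)}\in\ZZ$, the sum running over $G$-representatives of $\ch(G)$. I then split this as $z=\alpha-\beta$, where $\alpha$ collects the terms with non-negative coefficients and $\beta$ collects (the negatives of) the terms with negative coefficients, so that both $\alpha$ and $\beta$ are non-negative integral combinations of basis elements.

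By Remark~\ref{lx}, each basis element $[U,\mu]_G$ equals the class $[G/U,\widehat\mu]$ of a genuine $C$-monomial $G$-set; taking disjoint unions with the prescribed multiplicities produces $C$-monomial $G$-sets $(A,\mathfrak{a})$ and $(B,\mathfrak{b})$ with $\alpha=[C\times_{\mathfrak{a}}A]$ and $\beta=[C\times_{\mathfrak{b}}B]$ in $B_C(G)$. Viewing $A$ and $B$ as $C$-monomial $G$-posets ordered by equality, the first assertion of Lemma~\ref{lefprp} gives $\alpha=\Lambda_{(A,\mathfrak{a})}$ and $\beta=\Lambda_{(B,\mathfrak{b})}$. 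So far every positive element is accounted for; the remaining task is to realize the negative part as an honest Lefschetz invariant.

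For this I would use the poset $X=\{a,b,c,d,e\}$ with trivial $G$-action exhibited above, for which $\Lambda_{\tau_G(X)}=-1_{B_C(G)}$. Multiplicativity (Lemma~\ref{lefprp}(3)) then yields
$$-\beta=\Lambda_{(B,\mathfrak{b})}\cdot\Lambda_{\tau_G(X)}=\Lambda_{(B,\mathfrak{b})\times\tau_G(X)},$$
and additivity over disjoint unions (Lemma~\ref{lefprp}(2)) gives
$$z=\alpha+(-\beta)=\Lambda_{(A,\mathfrak{a})}+\Lambda_{(B,\mathfrak{b})\times\tau_G(X)}=\Lambda_{(A,\mathfrak{a})\,\sqcup\,((B,\mathfrak{b})\times\tau_G(X))},$$
exhibiting $z$ as the Lefschetz invariant of a $C$-monomial $G$-poset. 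Non-uniqueness is immediate from the freedom in the construction: one may rechoose the splitting of $z$, or disjoint-union with any $(Y,\mathfrak{n})\sqcup\big((Y,\mathfrak{n})\times\tau_G(X)\big)$, whose invariant vanishes.

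Since every step after the initial splitting is a direct invocation of Lemma~\ref{lefprp}, the only genuine content is the verification---already carried out in the example above---that this specific poset has Lefschetz invariant $-1_{B_C(G)}$, its order complex being the bipartite graph $K_{2,3}$ of Euler characteristic $5-6=-1$. I expect no real obstacle here: the argument is the monomial analogue of the classical fact that Lefschetz invariants of $G$-posets generate the Burnside ring, the essential point being that negation is implemented by an actual poset and not merely formally, after which multiplicativity transports it to every basis element at once.
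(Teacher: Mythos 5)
Your proposal is correct and follows essentially the same route as the paper, which obtains the corollary precisely from Lemma~\ref{lefprp} together with the five-element poset whose Lefschetz invariant is $-1_{B_C(G)}$: splitting an arbitrary element into a difference of positive parts, realizing each positive part as $\Lambda_{(X,\mathfrak{l})}$ for a $C$-monomial $G$-set via Remark~\ref{lx} and Lemma~\ref{lefprp}(1), and implementing the sign by multiplicativity. You merely spell out the details (the decomposition $z=\alpha-\beta$, the disjoint-union assembly, and the non-uniqueness) that the paper leaves implicit, and your verification that the poset has invariant $-1_{B_C(G)}$ (five length-zero chains minus six length-one chains) matches the paper's example.
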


\begin{prop}
	Let $H$ be a subgroup of $G$. Given a $C$-monomial
	$H$-poset $(X,\mathfrak{l})$, we have
	$$\Ind^G_H(\Lambda_{(X,\mathfrak{l})})= \Lambda_{\Ind^G_H(X,\mathfrak{l})}.$$ 
\end{prop}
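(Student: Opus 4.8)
The plan is to compare the two sides of the identity term by term, using the decomposition $G\times_HX=\bigsqcup_{g\in G/H}g\times_HX$ recalled above, which guarantees that every chain in $G\times_HX$ has the form $(u,_{_H}x_0)<\dots<(u,_{_H}x_n)$ for a fixed $u\in G$ and a chain $x_0<\dots<x_n$ in $X$. Throughout I write $\mathfrak{c}=(x_0<\dots<x_n)$ for a chain in $X$ and $\widetilde{\mathfrak{c}}=\big((1,_{_H}x_0)<\dots<(1,_{_H}x_n)\big)$ for the associated chain in $G\times_HX$.

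First I would establish the combinatorial backbone: the $G$-orbits of chains in $G\times_HX$ are in bijection with the $H$-orbits of chains in $X$, via $\mathfrak{c}\mapsto\widetilde{\mathfrak{c}}$. Indeed, since $u^{-1}(u,_{_H}x_i)=(1,_{_H}x_i)$, every chain of $G\times_HX$ is $G$-conjugate to some $\widetilde{\mathfrak{c}}$; and unwinding the equivalence relation defining $G\times_HX$ shows that $g\widetilde{\mathfrak{c}}=\widetilde{\mathfrak{c}'}$ forces $g\in H$ with $\mathfrak{c}'=g\mathfrak{c}$, so $\widetilde{\mathfrak{c}}$ and $\widetilde{\mathfrak{c}'}$ are $G$-conjugate if and only if $\mathfrak{c}$ and $\mathfrak{c}'$ are $H$-conjugate. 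In particular the chain length $n$ is preserved, and I may index the sum defining $\Lambda_{\Ind^G_H(X,\mathfrak{l})}$ by $H$-representatives $\mathfrak{c}$ of chains in $X$.

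Next I would compute the data attached to $\widetilde{\mathfrak{c}}$. The same unwinding shows that the $G$-stabilizer of $(1,_{_H}x_i)$ equals $H_{x_i}$, whence $G_{\widetilde{\mathfrak{c}}}=\bigcap_iH_{x_i}=H_{x_0,\dots,x_n}$. For the monomial datum, I would apply the definition of $G\times_H\mathfrak{l}$ at $u=1$, $x=y=x_0$: for $g\in H_{x_0}$ the relation $g\cdot 1=1\cdot h$ forces $h=g$, so that $(G\times_H\mathfrak{l})_{(1,_{_H}x_0)}(g)=\mathfrak{l}(g,x_0,x_0)=\mathfrak{l}_{x_0}(g)$. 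Thus $(G\times_H\mathfrak{l})_{(1,_{_H}x_0)}=\mathfrak{l}_{x_0}$ as characters of $H_{x_0}$, and its restriction to $H_{x_0,\dots,x_n}$ is exactly $\Res^{H_{x_0}}_{H_{x_0,\dots,x_n}}\mathfrak{l}_{x_0}$. Hence the contribution of the orbit of $\widetilde{\mathfrak{c}}$ to $\Lambda_{\Ind^G_H(X,\mathfrak{l})}$ is $(-1)^n\big[H_{x_0,\dots,x_n},\Res^{H_{x_0}}_{H_{x_0,\dots,x_n}}\mathfrak{l}_{x_0}\big]_G$.

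Finally I would match this with the other side. The induction map $\Ind^G_H\colon B_C(H)\to B_C(G)$ is the $\ZZ$-linear map sending $[U,\mu]_H$ to $[U,\mu]_G$, which is what the induction functor does on monomial sets: indeed $\Ind^G_H(H/U,\widehat{\mu})\cong(G/U,\widehat{\mu})$ has Burnside class $[U,\mu]_G$ by Remark \ref{lx} and Lemma \ref{lefprp}(1). Applying it to $\Lambda_{(X,\mathfrak{l})}=\sum_{\mathfrak{c}\in_HX}(-1)^n\big[H_{x_0,\dots,x_n},\Res^{H_{x_0}}_{H_{x_0,\dots,x_n}}\mathfrak{l}_{x_0}\big]_H$ produces precisely the sum just computed, so the two elements of $B_C(G)$ coincide. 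The only real obstacle is the bookkeeping: verifying that $\mathfrak{c}\mapsto\widetilde{\mathfrak{c}}$ is a genuine bijection on orbits (not merely a surjection) with $n$ preserved, and that the induced functor $G\times_H\mathfrak{l}$ restricts at the base point $(1,_{_H}x_0)$ to the original character $\mathfrak{l}_{x_0}$ rather than to a twist of it; both reduce to carefully unwinding the equivalence relation on $G\times_HX$, exactly as in the classical case where $C$ is trivial.
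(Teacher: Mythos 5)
Your proof is correct, and it reaches the same combinatorial core as the paper's argument --- namely that chains in $G\times_HX$ lie in a single component $u\times_HX$ and are, up to the $G$-action, exactly the chains $(1,_{_H}x_0)<\dots<(1,_{_H}x_n)$ coming from chains of $X$ --- but you package it differently. The paper upgrades this observation to an isomorphism of $C$-monomial $G$-sets $\big(G\times_H\Sd_n(X),G\times_H\mathfrak{l}_n\big)\cong\big(\Sd_n(G\times_HX),(G\times_H\mathfrak{l})_n\big)$ via the explicit map $f_n(u,_{_H}x_0<\dots<x_n)=\big((u,_{_H}x_0)<\dots<(u,_{_H}x_n)\big)$, checking the functor compatibility $G\times_H\mathfrak{l}_n=(G\times_H\mathfrak{l})_n\circ f_n$ globally, and then concludes by linearity from $\Lambda_{(X,\mathfrak{l})}=\sum_n(-1)^n\big(\Sd_n(X),\mathfrak{l}_n\big)$. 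You instead expand both sides of the identity in the standard basis $\{[V,\nu]_G\}$ of $B_C(G)$: an orbit bijection $\mathfrak{c}\mapsto\widetilde{\mathfrak{c}}$, the stabilizer computation $G_{\widetilde{\mathfrak{c}}}=H_{x_0,\dots,x_n}$, the identification $(G\times_H\mathfrak{l})_{(1,_{_H}x_0)}=\mathfrak{l}_{x_0}$ at the base point only, and the explicit formula $\Ind_H^G[U,\mu]_H=[U,\mu]_G$ (which the paper uses implicitly but never spells out; your verification via $\Ind_H^G(H/U,\widehat{\mu})\cong(G/U,\widehat{\mu})$, Remark~3 and Lemma~9(1) is the right justification). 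Your computations all check out: the unwinding of the orbit relation does force $g\in H$ in the injectivity step, and at $u=1$ the element $h$ with $g\cdot 1=1\cdot h$ is uniquely $g$, so no twist appears in the character. What each approach buys: the paper's version is a stronger, functorial statement (an isomorphism in $_CMG$-{\sf set} before passing to Grothendieck classes, reusable at the $\Sd_n$ level), while yours is more elementary --- it avoids constructing $f_n$ and verifying the natural-transformation condition everywhere, since equality of Lefschetz invariants only requires matching orbit representatives --- at the cost of more delicate orbit and basis bookkeeping.
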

\begin{proof}Since
	$$\Ind^G_H(\Lambda_{(X,\mathfrak{l})})= 
	 \sum\limits_{n \in \NN}(-1)^n 
	 \Ind^G_H\big(\Sd_n(X),\mathfrak{l}_n\big),$$
	 we need to show that there exists a $C$-monomial
	 $G$-set isomorphism between
	 $$\big(G\times_H\Sd_n(X), G\times_H\mathfrak{l}_n\big)$$
	 and 
	 $$\big(\Sd_n(G\times_HX), (G\times_H\mathfrak{l})_n\big)$$
	 for any $n\in \NN.$\par
	 We define 
	 $$(f_n,\id): \big(G\times_H\Sd_n(X), G\times_H\mathfrak{l}_n\big)
	 \rightarrow \big(\Sd_n(G\times_HX), (G\times_H\mathfrak{l})_n\big)$$
	 where 
	 $$f_n: G\times_H\Sd_n(X) \rightarrow \Sd_n(G\times_HX)$$ 
	 such that
	 $$f_n(u,_{_H}x_0<...<x_n)= 
	 \big((u,_{_H}x_0)<...<(u,_{_H}x_n)\big)$$
	 for any chain $(u,_{_H}x_0<...<x_n)$
     in $G\times_H\Sd_n(X).$\par	 
	 Let  $(u_0,_{_H}x_0)<...<(u_n,_{_H}x_n)$ be a chain
	 in $\Sd_n(G\times_HX)$. There exist some $h_i \in H$
	 such that $u_ih_i=u_{i+1}$ and {\rouge $h_{i}^{-1}x_i< x_{i+1}$}
	 for all $0\leq i \leq n-1$. Then
	 $$f_n\big(u_0,_{_H}
	 x_0<h_0x_1<...<h_0...h_{n-1}x_n\big)
	 = (u_0,_{_H}x_0)<...<(u_n,_{_H}x_n).$$ 
	 Obviously, $f_n$ is a map of $G$-sets and injective.\par
	 Now, we show that $G\times_H\mathfrak{l}_n=(G\times_H\mathfrak{l})_n\circ f_n.$
	 We consider an element $k\in G$, and chains  $(u,_{_H}x_0<...<x_n)$ in $G\times_H\Sd_n(X)$ such that 
	 $$k(u,_{_H}x_0<...<x_n)= (v,_{_H}y_0<...<y_n).$$
	 There exists some $h\in H$ such that 
	 $ku=vh$ and $hx_i=y_i$ for all $0\leq i \leq n$. 
	 Then
	 \begin{align*}
	 &(G\times_H\mathfrak{l})_n
	 \big(k, f_n(u,_{_H}x_0<...<x_n),f_n(v,_{_H}y_0<...<y_n)\big)\\
	 &= (G\times_H\mathfrak{l})_n \big(k, (u,_{_H}x_0)<...<(u,_{_H}x_n), (v,_{_H}y_0)<...<(v,_{_H}y_n)\big)\\
	 %&=G\times_H\mathfrak{l}(k,(u,_{_H}x_0),(v,_{_H}y_0))
	 %=\mathfrak{l}(h,x_0,y_0)\\
	 %&=\mathfrak{l}_n(h,x_0<...<x_n,hx_0<...<hx_n) \\
	 &=\mathfrak{l}_n(h,x_0<...<x_n,y_0<...<y_n) \\
	 &=(G\times_H\mathfrak{l}_n)\big(k,(u,_{_H}x_0<...<x_n),(v,_{_H}y_0<...<y_n)\big).\\
	 \end{align*} 

 \end{proof}

{\rouge Let $(X,\mathfrak{l})$ be a $G$-poset and let $x\in X$. Then the pairs 
	$(]x,\cdot[_X,{\dgreen \mathfrak{l}_{>x}})$ and $(]\cdot, x[_X, {\dgreen \mathfrak{l}^{<x}})$ are $C$-monomial
	$G_x$-posets where
$$]x,\cdot[_X= \{y \in X\mid x<y\},\,\,\,\,\,\,\,\,\,\,]\cdot, x[_X= \{y \in X\mid y<x\}$$
which are $G_x$-posets 
and ${\dgreen \mathfrak{l}_{>x}} : \widehat{]x,\cdot[_X} \rightarrow \bullet_C$ and ${\dgreen \mathfrak{l}^{<x}} : \widehat{]\cdot, x[_X} \rightarrow \bullet_C$ are the restrictions of the functor $\mathfrak{l}$.

}

\begin{lemma}\label{forind}
	Let $(X, \mathfrak{l})$ be a monomial $G$-poset. We have
	
	$$\Lambda_{(X, \mathfrak{l})} = 
	- \!\!\!\!\sum\limits_{\substack{x \in [G/X]}}\!\!\!\!\Ind^G_{G_x}
	 \big( [G_x, \mathfrak{l}_{x}]_{G_x}\cdot {\rouge\widetilde{\Lambda}_{]x,\cdot [_X}}\big).$$	
\end{lemma}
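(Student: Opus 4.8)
The plan is to compute $\Lambda_{(X,\mathfrak{l})}$ by regrouping the chains of $X$ according to the $G$-orbit of their smallest element. Starting from the expansion $\Lambda_{(X,\mathfrak{l})}=\sum_{n\in\NN}(-1)^n\big(\Sd_n(X),\mathfrak{l}_n\big)$ furnished by Remark~\ref{chn}, I would first note that sending a chain to its minimal element is a $G$-equivariant map $\Sd_n(X)\to X$. Fixing representatives $x\in[G/X]$ of the $G$-orbits on $X$, this presents $\big(\Sd_n(X),\mathfrak{l}_n\big)$ as a disjoint union over $x$ of the induced $C$-monomial $G$-sets $\Ind^G_{G_x}$ of the fibre over $x$; and as a $G_x$-set that fibre $\{x_0<\dots<x_n\mid x_0=x\}$ is the one-point set for $n=0$ and is identified with $\Sd_{n-1}(]x,\cdot[_X)$ for $n\geq1$. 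To make this regrouping clean I would prefer to work inside $\QQ\otimes_{\ZZ}B_C(G)$ and sum over all chains weighted by $|G_c|/|G|$, exactly as in the proof of Lemma~\ref{lefprp}(3), so as not to have to choose orbit representatives of chains and of minimal elements compatibly at once.

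The heart of the matter is the bookkeeping of the monomial structure on these fibres. By definition $\mathfrak{l}_n$ reads off $\mathfrak{l}$ at the bottom of a chain, so on the fibre over $x$ the functor $\mathfrak{l}_n$, restricted to $G_x$, takes the constant value $\mathfrak{l}(g,x,x)=\mathfrak{l}_x(g)$. Using Remark~\ref{res}, for each chain $x<x_1<\dots<x_n$ the character $\mathfrak{l}_x$ agrees on the stabilizer $G_{x,x_1,\dots,x_n}$ with $\mathfrak{l}_{x_1}$, so the basis element produced by this fibre is exactly the one attached to the chain $x_1<\dots<x_n$ in the Lefschetz invariant of $(]x,\cdot[_X,\mathfrak{l}_{>x})$. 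Collecting the $n=0$ term as $[G_x,\mathfrak{l}_x]_G$ and the $n\geq1$ terms as minus that Lefschetz invariant, the decomposition should yield the intermediate identity
$$\Lambda_{(X,\mathfrak{l})}=\sum_{x\in[G/X]}\Big([G_x,\mathfrak{l}_x]_G-\Ind^G_{G_x}\Lambda_{(]x,\cdot[_X,\mathfrak{l}_{>x})}\Big).$$

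For the two functorial inputs I would invoke results already in hand: that $\Ind^G_{G_x}$ carries a basis element $[V,\nu]_{G_x}$ of $B_C(G_x)$ to the basis element $[V,\nu]_G$, and the preceding Proposition $\Ind^G_{G_x}\Lambda_{(Z,\mathfrak{n})}=\Lambda_{\Ind^G_{G_x}(Z,\mathfrak{n})}$, which lets me move freely across the induction. Applying the first of these to the term $[G_x,\mathfrak{l}_x]_G=\Ind^G_{G_x}[G_x,\mathfrak{l}_x]_{G_x}$ and factoring out the induction, the displayed formula becomes $-\sum_x\Ind^G_{G_x}\big(\Lambda_{(]x,\cdot[_X,\mathfrak{l}_{>x})}-[G_x,\mathfrak{l}_x]_{G_x}\big)$. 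To reach the stated form it then remains to establish the multiplicative factorization
$$\Lambda_{(]x,\cdot[_X,\mathfrak{l}_{>x})}-[G_x,\mathfrak{l}_x]_{G_x}=[G_x,\mathfrak{l}_x]_{G_x}\cdot\widetilde{\Lambda}_{]x,\cdot[_X},$$
after which the definition $\widetilde{\Lambda}_{]x,\cdot[_X}=\Lambda_{(]x,\cdot[_X,\mathfrak{l}_{>x})}-[G_x,1_{G_x}]_{G_x}$ delivers the claimed equality.

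The main obstacle is exactly this last factorization: multiplication by the rank-one class $[G_x,\mathfrak{l}_x]_{G_x}$ must reproduce, orbit representative by orbit representative, the single factor of $\mathfrak{l}_x$ that $\mathfrak{l}_n$ forces at the minimal vertex of every chain through $x$. This is a purely character-theoretic check, resting on the compatibility relation $\Res^{G_x}_{G_x\cap G_y}\mathfrak{l}_x=\Res^{G_y}_{G_x\cap G_y}\mathfrak{l}_y$ of Remark~\ref{res}; the delicate part is getting the multiplicity of the twisting factor right, so that the bottom vertex contributes precisely one copy of $\mathfrak{l}_x$ and the interior vertices are not over-twisted. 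That verification is where I would concentrate the care of the write-up, since it is the only place the monomial data enters in an essential, non-formal way.
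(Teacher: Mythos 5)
Your overall route is the paper's route: the paper's proof regroups the chains of $X$ by the $G$-orbit of their minimal vertex $x_0$, writes the contribution of each orbit as $\Ind^G_{G_{x_0}}$ of a sum over chains in $]x_0,\cdot[_X$, and then factors the rank-one class $[G_{x_0},\mathfrak{l}_{x_0}]_{G_{x_0}}$ out of every term. Your $\QQ\otimes_\ZZ B_C(G)$ weighting is a harmless presentational variant, and your intermediate identity $\Lambda_{(X,\mathfrak{l})}=\sum_{x}\big([G_x,\mathfrak{l}_x]_G-\Ind^G_{G_x}\Lambda_{(]x,\cdot[_X,\mathfrak{l}_{>x})}\big)$ is correct (the sign works out because a chain $x<x_1<\dots<x_n$ of $X$ has sign $(-1)^n$ while $x_1<\dots<x_n$ carries sign $(-1)^{n-1}$ in $\Lambda_{(]x,\cdot[_X,\mathfrak{l}_{>x})}$). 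The one genuine error is your closing sentence: you ``define'' $\widetilde{\Lambda}_{]x,\cdot[_X}=\Lambda_{(]x,\cdot[_X,\mathfrak{l}_{>x})}-[G_x,1_{G_x}]_{G_x}$, i.e.\ you reduce the \emph{twisted} invariant. In the lemma (and in the proof of Lemma~\ref{st}, which applies results on ordinary posets to this symbol), $\widetilde{\Lambda}_{]x,\cdot[_X}$ is the reduced Lefschetz invariant of the \emph{untwisted} $G_x$-poset $]x,\cdot[_X$, that is $\Lambda_{\tau_{G_x}(]x,\cdot[_X)}-[G_x,1_{G_x}]_{G_x}$. With your definition, the factorization you display would collapse to $\Lambda_{(]x,\cdot[_X,\mathfrak{l}_{>x})}=[G_x,\mathfrak{l}_x]_{G_x}\cdot\Lambda_{(]x,\cdot[_X,\mathfrak{l}_{>x})}$, which is false in general: multiplying a basis element $[V,\Res^{G_x}_{V}\mathfrak{l}_x]_{G_x}$ by $[G_x,\mathfrak{l}_x]_{G_x}$ twists the character a \emph{second} time.

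With the correct reading, the factorization you rightly isolate as the heart of the matter is exactly the paper's key step, and it is much less delicate than you fear. Since the class $[G_x,\mathfrak{l}_x]_{G_x}$ has full subgroup, the product formula in $B_C(G_x)$ involves a single double coset and gives
$$[G_x,\mathfrak{l}_x]_{G_x}\cdot[V,1_V]_{G_x}=\big[V,\Res^{G_x}_{V}\mathfrak{l}_x\big]_{G_x}\qquad\hbox{for any }V\leq G_x,$$
so each basis element is twisted exactly once --- there is no issue of over-twisting interior vertices, because each chain contributes a single character, not one per vertex. Applying this termwise to $\Lambda_{\tau_{G_x}(]x,\cdot[_X)}$ and using Remark~\ref{res} to identify $\Res^{G_x}_{G_{x,x_1,\dots,x_n}}\mathfrak{l}_x$ with $\Res^{G_{x_1}}_{G_{x,x_1,\dots,x_n}}\mathfrak{l}_{x_1}$ yields $[G_x,\mathfrak{l}_x]_{G_x}\cdot\Lambda_{\tau_{G_x}(]x,\cdot[_X)}=\Lambda_{(]x,\cdot[_X,\mathfrak{l}_{>x})}$, hence $[G_x,\mathfrak{l}_x]_{G_x}\cdot\widetilde{\Lambda}_{]x,\cdot[_X}=\Lambda_{(]x,\cdot[_X,\mathfrak{l}_{>x})}-[G_x,\mathfrak{l}_x]_{G_x}$, which closes your argument. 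So: fix the meaning of $\widetilde{\Lambda}_{]x,\cdot[_X}$ and carry out this one-line computation, and your proof coincides with the paper's, with the intermediate invariant $\Lambda_{(]x,\cdot[_X,\mathfrak{l}_{>x})}$ made explicit rather than absorbed into the chain-by-chain factorization.
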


\begin{proof}
	\begin{align*}
	\Lambda_{(X, \mathfrak{l})} &=\!\!\!\!\!\!\!\!\sum\limits_{x_0 < ... < x_n \in_{_G} X}\!\!\!\!\!\!\!\!(-1)^n \big[{\rouge G_{x_0, ..., x_n}}, \Res^{G_{x_0}}_{G_{x_0, ..., x_n}}(\mathfrak{l}_{x_0})\big]_G \\
	&=\!\!\!\sum\limits_{x_0  \in_{_G} X}\sum\limits_{x_1 < ... < x_n \in_{_G} X : {\rouge x_0 < x_1} }\!\!\!\!\!\!\!\!
	(-1)^n \big[G_{x_0, ..., x_n}, \Res^{G_{x_0}}_{G_{x_0, ..., x_n}}(\mathfrak{l}_{x_0})\big]_G\\
	&= \!\!\!\sum\limits_{x_0  \in_{_G} X}\Ind^G_{G_{x_0}}\sum\limits_{x_1 < ... < x_n \in_{_{G_{x_0}}} {\rouge ]x_0, \cdot[_X}}\!\!\!\!\!\!\!\!(-1)^n \big[{\rouge G_{x_0, ..., x_n}}, \Res^{G_{x_0}}_{G_{x_0, ..., x_n}}(\mathfrak{l}_{x_0})\big]_{G_{x_0}}\\
	&= \!\!\sum\limits_{x_0  \in_{_G} X}\!\!\Ind^G_{G_{x_0}}\big[G_{x_{0}}, \mathfrak{l}_{x_0}\big]_{G_{x_0}}
	\sum\limits_{x_1 < ... < x_n \in_{_{G_{x_{0}}}{\rouge ]x_0, \cdot[_X}}}(-1)^n \big[{\rouge G_{x_0, ..., x_n}}, 1_{G_{x_0,..,x_n}}\big]_{G_{x_0}}\\
	&=- \!\!\sum\limits_{\substack{\dgreen x  \in_{_G} X}}\!\!\Ind^G_{G_x} \big( [G_x, \mathfrak{l}_{x}]_{G_x}\cdot {\rouge \widetilde{\Lambda}_{]x,\cdot [_X}}\big).
	\end{align*}
	
\end{proof}

\begin{rem}\label{opposite} 
We can define the {\em opposite} of a $C$-monomial $G$-poset $(X,\mathfrak{l})$ as follows. We consider the pair
$(X^{\op}, \mathfrak{l}^{\op})$ where
$X^{\op}$ is the opposite $G$-poset with the order $\leq^{\op}$ defined by 
$$\forall x,\,y \in X,\,g\in G,\,\, gx\leq^{\op} y \Leftrightarrow y\leq gx$$
and $\mathfrak{l}^{\op}: \widehat{X^{\op}}\rightarrow \bullet_C$ is defined
by 
$$\mathfrak{l}^{\op}(g,x,y)=\mathfrak{l}^{-1}(g^{-1}, y, x)$$
for any $x,\,y \in X^{\op}$ and $g\in G$ such that $gx\leq^{\op}y$.
Obviously, the pair $(X^{\op}, \mathfrak{l}^{\op})$ is a $C$-monomial $G$-poset.
Moreover the assignment $(X,\mathfrak{l}) \mapsto (X^{\op}, \mathfrak{l}^{\op})$ 
is a functor $_CMG\text{-\sf poset} \rightarrow {_CMG\text{-\sf poset}}$: if
$(f,\lambda): (X,\mathfrak{l}) \rightarrow (Y,\mathfrak{m})$ is a map
of $C$-monomial $G$-posets, then $f:X^{\op} \rightarrow Y^{\op}$ is a map
of $G$-posets and for any $gx\leq^{\op}\xp$, we get the commutative diagram
{\dgreen
$$\xymatrix@C=7ex@R=7ex{
\mathfrak{l}(x)\ar[r]^-{\lambda_x}\ar[d]_-{\mathfrak{l}^{\op}(g, x, \xp)}&\mathfrak{m} \circ f(x)\ar[d]^-{\mathfrak{m}^{\op}\left(\rule{0ex}{1.25ex}g, f(x), f(\xp)\right)}\\
\mathfrak{l}(\xp)\ar[r]_-{\lambda_{\xp}}&\mathfrak{m} \circ f(\xp).
}
$$
}
%\begin{picture}(400,120)
%\put(140,95){$\mathfrak{l}(x)$}
%\put(140,35){$\mathfrak{l}(\xp)$}
%\put(220,95){$\mathfrak{m} \circ f(x)$}
%\put(220,35){$\mathfrak{m} \circ f(\xp)$ }
%\put(390,35){$.$ }
%
%\put(150,89){\vector(0,-1){38}}
%\put(240,89){\vector(0,-1){38}}
%\put(165,98){\vector(1,0){45}}
%\put(165,38){\vector(1,0){45}}
%
%
%\put(180,104){$\lambda_x$}
%\put(90,65){$\mathfrak{l}^{\op}(g, x, \xp)$}
%\put(245,65){$\mathfrak{m}^{\op}\big(g, f(x), f(\xp)\big)$}
%\put(180,26){$\lambda_\xp$.}
%\end{picture}

Observe that $(\mathfrak{l}^{\op})_x(g)=\mathfrak{l}^{-1}(g^{-1},x,x)=\mathfrak{l}(g,x,x)=\mathfrak{l}_x(g)$, for any $x\in X$ and $g\in G_x$. It follows that $\Lambda_{(X, \mathfrak{l})}= \Lambda_{(X^{\op}, \mathfrak{l}^{\op})}$.\par
\end{rem}

Let  $(f, \lambda) : (X, \mathfrak{l}) \rightarrow (Y, \mathfrak{m})$ be
a map of $C$-monomial $G$-posets. Given $y\in Y$, following \cite{BURN} we set
$$f^y= \{x \in X\mid f(x)\leq y\},\,\,\,\,\,\,\,\,\,\,f_y=\{x\in X\mid f(x)\geq y\}$$ 
which are both $G_y$-posets. We denote by $(f^y, {\dgreen \mathfrak{l}_{|f^y}})$ the $C$-monomial {\rouge $G_y$}-poset where {\rouge${\dgreen \mathfrak{l}_{|f^y}} : \widehat{f^y} \rightarrow \bullet_C$} is the restriction of the functor $\mathfrak{l}$. Similarly, we denote by  
$(f_y, {\dgreen \mathfrak{l}_{|f_y}})$ to be $C$-monomial {\rouge $G_y$-poset} where 
{\rouge ${\dgreen \mathfrak{l}_{|f_y}}: \widehat{f_y} \rightarrow \bullet_C$} is the
restriction of the functor $\mathfrak{l}$.

\begin{example} Let $(f,\lambda): (X, \mathfrak{l}) \rightarrow (Y, \mathfrak{m}) $
	 be a map of $C$-monomial $G$-posets. We define a $G$-poset $X* _{f,\lambda} Y$ with underlying $G$-set $X\sqcup Y$ as follows: for $z,z'\in X\sqcup Y$, we set
	
		\[
	z \leq z' \Leftrightarrow 
	\begin{cases}
	z,z' \in X   &\hbox{\rm and}\;\;\;  z \leq z' \in X \\
	z,z'\in Y  &\hbox{\rm and} \;\;\;   z \leq z' \in Y \\
	z \in X, z' \in Y &\hbox{\rm and}\;\;\;  f(z) \leq z' \in Y
	\end{cases}.
	\]
	
	We define the functor  
	$\mathfrak{l}*_{f,\lambda}\mathfrak{m} : \widehat{X \sqcup Y} \rightarrow \bullet_C$ by

	\[ (\mathfrak{l}*_{f,\lambda} \mathfrak{m})(g,z,z^\prime)=
	\begin{cases}
	\mathfrak{l}(g,z,z^\prime)   &\hbox{\rm if}\,\,  z,\,z^\prime \in X \\
	\mathfrak{m}(g,z,z^\prime)      &\hbox{\rm if}\,\,  z,\,z^\prime \in Y \\
	\mathfrak{m}(g,f(z),z^\prime)\lambda_z & \hbox{\rm if}\,\,  z \in X,\,z^\prime \in Y. \\
	\end{cases}.
	\]
	for any $z,\,z^\prime \in  X* _{f,\lambda} Y$ and $g\in G$ such that $gz\leq z^\prime$.\par
	Now let $z_1,\,z_2,\,z_3 \in X* _{f,\lambda} Y$ and $g,\,g^\prime \in G$ such that $gz_1\leq z_2$ and
	$g^\prime z_2\leq z_3$. We aim to show that
	$$ (\mathfrak{l}*_{f,\lambda} \mathfrak{m})(g^\prime g,z_1,z_3)
	= (\mathfrak{l}*_{f,\lambda} \mathfrak{m})(g^\prime,z_2,z_3)
	(\mathfrak{l}*_{f,\lambda} \mathfrak{m})(g,z_1,z_2).$$ 
	We have four cases to consider:
	\begin{itemize}
		\item $z_1,\,z_2,\,z_3 \in X$
		\item $z_1,\,z_2 \in X$ and $z_3 \in Y$
		\item $z_1\in X$ and $z_2,\,z_3 \in Y$
		\item $z_1,\,z_2,\,z_3 \in Y$.
	\end{itemize}
 In the first case we get
 $$ (\mathfrak{l}*_{f,\lambda} \mathfrak{m})(g^\prime g,z_1,z_3)
 = \mathfrak{l}(g^\prime g,z_1,z_3)= 
 \mathfrak{l}(g^\prime, z_2,z_3)\mathfrak{l}(g,z_1,z_2)$$
$$ = (\mathfrak{l}*_{f,\lambda} \mathfrak{m})(g^\prime, z_2,z_3)
(\mathfrak{l}*_{f,\lambda} \mathfrak{m})(g,z_1,z_2).$$
In the second case, using the naturality of $\lambda$ we get
$$(\mathfrak{l}*_{f,\lambda} \mathfrak{m})(g^\prime g,z_1,z_3)
= \mathfrak{m}\big(g^\prime g, f(z_1), z_3\big)\lambda_{z_1}
= \mathfrak{m}\big(g^\prime, f(z_2), z_3\big)\mathfrak{m}\big(g, f(z_1), f(z_2)\big)\lambda_{z_1}$$
$$=\mathfrak{l}(g,z_1,z_2){\rouge \mathfrak{m}\big(g^\prime,f(z_2),z_3\big)}\lambda_{z_2}
= (\mathfrak{l}*_{f,\lambda} \mathfrak{m})(g^\prime, z_2, z_3)
(\mathfrak{l}*_{f,\lambda} \mathfrak{m})(g, z_1, z_2).$$
In the third case, we get
$$(\mathfrak{l}*_{f,\lambda} \mathfrak{m})(g^\prime g,z_1,z_3)
= \mathfrak{m}\big(g^\prime g,f(z_1),z_3\big)\lambda_{z_1}=
\mathfrak{m}\big(g^\prime ,f(z_2),z_3\big)
\mathfrak{m}\big(g,f(z_1),f(z_2)\big)\lambda_{z_1}$$
$$
=(\mathfrak{l}*_{f,\lambda} \mathfrak{m})(g,z_1,z_2)
(\mathfrak{l}*_{f,\lambda} \mathfrak{m})(g^\prime,z_2,z_3).$$
In the fourth case 
$$(\mathfrak{l}*_{f,\lambda} \mathfrak{m})(g^\prime g,z_1,z_3)
= \mathfrak{m}\big(g^\prime g,z_1,z_3\big)=
\mathfrak{m}(g^\prime, z_2,z_3)\mathfrak{m}(g,z_1,z_2)$$
$$ = (\mathfrak{l}*_{f,\lambda} \mathfrak{m})(g^\prime, z_2,z_3)
(\mathfrak{l}*_{f,\lambda} \mathfrak{m})(g,z_1,z_2).$$
Let $z\in  X* _{f,\lambda} Y$ then obviously we have
$(\mathfrak{l}*_{f,\lambda} \mathfrak{m})(1,z,z)= 1.$ Thus, 
$( X* _{f,\lambda} Y, \mathfrak{l}*_{f,\lambda} \mathfrak{m})$
is a $C$-monomial $G$-poset.
\end{example}

\begin{lemma}\label{st} Let $(f,\lambda) : (X, \mathfrak{l}) \rightarrow (Y, \mathfrak{m})$
	 be a map of $C$-monomial $G$-posets. Then
	
$\Lambda_{(X* _{f,\lambda} Y, \mathfrak{l}*_{f,\lambda} \mathfrak{m} )}= \Lambda_{(Y, \mathfrak{m})}.$
		
\end{lemma}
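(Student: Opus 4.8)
The plan is to compute $\Lambda_{(Z,\mathfrak r)}$ directly from the definition as a signed sum over $G$-orbits of chains, where $Z=X*_{f,\lambda}Y$ and $\mathfrak r=\mathfrak l*_{f,\lambda}\mathfrak m$, and then to exhibit a sign-reversing, $G$-equivariant involution on the chains meeting $X$ that cancels all of their contributions, leaving exactly the pure-$Y$ chains. First I would record the shape of a chain in $Z$: since no element of $Y$ lies below an element of $X$, every chain decomposes as $x_0<\dots<x_p<y_0<\dots<y_q$, with $x_0<\dots<x_p$ a (possibly empty) chain in $X$, with $y_0<\dots<y_q$ a (possibly empty) chain in $Y$, subject to the gluing condition $f(x_p)\le y_0$ when both parts are nonempty. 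The pure-$Y$ chains contribute precisely $\Lambda_{(Y,\mathfrak m)}$: a chain contained in $Y$ has the same setwise stabilizer whether viewed in $Z$ or in $Y$, and its minimum $y_0\in Y$ carries the label $\mathfrak r_{y_0}=\mathfrak m_{y_0}$. So the whole content is to show that the chains meeting $X$ contribute $0$.

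To do this I would define an involution $\iota$ on the set of chains of $Z$ that meet $X$. Given such a chain $c$ with largest $X$-element $x_p$, form the canonical element $f(x_p)\in Y$. Since $x_i\le x_p$ forces $f(x_i)\le f(x_p)$, a short squeeze argument shows that $f(x_p)$ can coincide only with the minimal $Y$-element $y_0$ of $c$ (when a $Y$-part is present). I then set $\iota(c)$ to be $c$ with $f(x_p)$ deleted when $y_0=f(x_p)$, and $c$ with $f(x_p)$ inserted as a new minimal $Y$-element when $f(x_p)$ is absent (in particular when the $Y$-part of $c$ is empty). One checks that $\iota$ is well defined, is its own inverse, and changes the number of elements of a chain by exactly $\pm1$, hence reverses the sign $(-1)^n$ in the Lefschetz sum. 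Because $f$ is $G$-equivariant and $f(x_p)$ is determined by $c$, the map $\iota$ commutes with the $G$-action and so descends to the set of $G$-orbits; and as $\iota(c)$ and $c$ have different cardinalities, $\iota$ has no fixed orbit.

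The heart of the argument is that $\iota$ preserves the $B_C(G)$-contribution of a chain, so paired terms cancel. Here I would use three observations. First, $\iota$ does not touch the minimum $x_0$ of the chain, so the basepoint character $\mathfrak r_{x_0}=\mathfrak l_{x_0}$ appearing in the term $[G_c,\Res^{G_{x_0}}_{G_c}(\mathfrak r_{x_0})]_G$ is the same for $c$ and for $\iota(c)$. Second, any $g\in G$ stabilizing the underlying set of $c$ preserves the partition $X\sqcup Y$ and the order, hence fixes the top $X$-element $x_p$, and therefore fixes $f(x_p)=f(gx_p)$; this yields $G_c=G_{\iota(c)}$ as subgroups of $G$, so the restricted characters attached to $c$ and $\iota(c)$ literally coincide. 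Consequently the two terms are equal basis elements of $B_C(G)$ carrying opposite signs, and cancel. Summing over all $G$-orbits of chains meeting $X$ then gives $0$, so $\Lambda_{(Z,\mathfrak r)}=\Lambda_{(Y,\mathfrak m)}$.

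I expect the main obstacle to be exactly the bookkeeping of the third paragraph: one must confirm that the element inserted or deleted is canonically $f(x_p)$, that it lands as the minimal $Y$-element of the modified chain, and that the monomial label of the Lefschetz term (anchored at the untouched minimum $x_0$) is genuinely insensitive to this modification. The structural reason behind all of this is the order-preserving retraction $r$ given by $z\mapsto f(z)$ for $z\in X$ and $z\mapsto z$ for $z\in Y$, which satisfies $\mathrm{id}_Z\le r$ pointwise and thereby exhibits $Y$ as a $G$-deformation retract of $Z$; the involution $\iota$ is precisely the device that upgrades this homotopy to an equality in $B_C(G)$, keeping track of the monomial data that an ordinary Euler-characteristic argument would lose.
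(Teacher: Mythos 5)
Your proof is correct, but it takes a genuinely different route from the paper's. The paper deduces the lemma from Lemma~\ref{forind}: it writes $\Lambda_{(X*_{f,\lambda}Y,\,\mathfrak{l}*_{f,\lambda}\mathfrak{m})}$ as $-\sum_z \Ind^G_{G_z}\big([G_z,\mathfrak{r}_z]_{G_z}\cdot\widetilde{\Lambda}_{]z,\cdot[}\big)$, and for $z\in X$ kills the term by showing that the upper interval $]z,\cdot[_Z$ admits a comparison with $[f(z),\cdot[_Y$ (maps $g,g'$ with $g\circ g'=\mathrm{Id}$ and $\mathrm{Id}\leq g'\circ g$), so that $\widetilde{\Lambda}_{]z,\cdot[}=0$ by homotopy invariance of the reduced Lefschetz invariant, quoted from the Burnside rings survey; only the $z\in Y$ terms survive, and these reassemble into $\Lambda_{(Y,\mathfrak{m})}$ by Lemma~\ref{forind} applied to $(Y,\mathfrak{m})$. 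You instead argue at the chain level with a sign-reversing $G$-equivariant involution (toggling $f(x_p)$ above the top $X$-element $x_p$), and all your verifications go through: the squeeze argument showing $f(x_p)$ can only occur as the minimal $Y$-element is sound; the setwise stabilizer of a finite chain fixes it pointwise and preserves the pieces $X$ and $Y$, so $G_c\subseteq G_{f(x_p)}$ and hence $G_c=G_{\iota(c)}$ exactly (not merely up to conjugacy), making the paired basis elements of $B_C(G)$ literally equal with opposite signs; and since the term is anchored at the untouched minimum $x_0\in X$, with $\mathfrak{r}_{x_0}=\mathfrak{l}_{x_0}$ (the character only evaluates $\mathfrak{r}$ on pairs within $X$, so $\lambda$ never enters), the monomial data matches. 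What each approach buys: the paper's argument is short given its toolkit, reuses Lemma~\ref{forind} (which it needs anyway), and outsources the cancellation to known homotopy-invariance results; yours is self-contained and elementary, avoiding both Lemma~\ref{forind} and the external citations --- it is in effect the acyclic-matching proof that a cone has vanishing reduced Euler characteristic, globalized over the poset and enriched with the monomial bookkeeping. Your closing observation correctly identifies the common structural core: the retraction $r$ (equal to $f$ on $X$ and the identity on $Y$) with $\mathrm{id}_Z\leq r$, which is exactly the adjunction the paper's maps $g,g'$ encode; your involution is the combinatorial shadow of that homotopy.
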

\begin{proof}
\begin{enumerate}
\item  Let {\rouge $z \in Z=X* _{f,\lambda} Y$}. If $z\in X$ consider the map $g:\, ]z, \cdot[_Z \rightarrow  [f(z), \cdot[_Y$ defined by  
	\[ g(t)=
\begin{cases}
f(t)  &\text{if}\,\,  t \in X \\
t     &\text{if}\,\,  t \in Y \\
\end{cases}.
\]

Let $g^\prime :\, \big[f(z), \cdot\big[ \rightarrow \big]z, \cdot\big[$ defined
 by $g^\prime(s)=s.$ Then $g$ and $g^\prime$ are maps of 
 $G_z$-posets
such that $g \circ g^\prime = \text{Id}$ and {\rouge $\text{Id} \leq g^\prime \circ g.$} So if $z \in X$
  using [\cite{BURN}, Lemma 4.2.4 and Proposition 4.2.5], we get
   $\widetilde{\Lambda}_{]z,\cdot [}= \widetilde{\Lambda}_{[f(z),\cdot [}=0.$ Thus,
\begin{align*}
\Lambda_{(X* _{f,\lambda} Y, \mathfrak{l}*_{f,\lambda} \mathfrak{m})}&=
 -\!\!\!\!\!\!\!\sum\limits_{\substack{z \in [G\backslash X* _{f,\lambda} Y]}}\!\!\!\!\!\!\!
 \Ind^G_{G_z} ( [G_z, \mathfrak{l}_{z}]_{G_z}\cdot \widetilde{\Lambda}_{]z,\cdot [})\\
&=-\!\!\!\!\sum\limits_{\substack{y \in [G\backslash Y]}}\!\!\!\!\Ind^G_{G_y}
 ( [G_y, \mathfrak{l}_{y}]_{G_y}\cdot \widetilde{\Lambda}_{]y,\cdot [})=\Lambda_{(Y, \mathfrak{m})}.
\end{align*}
\end{enumerate}
\end{proof}

As a consequence, we give an analogue of Proposition 4.2.7. in \cite{BURN}, which in turn was inspired by a much deeper theorem of Quillen in~\cite{quillen}.

\begin{prop} Let $(f,\lambda) : (X, \mathfrak{l}) \rightarrow (Y, \mathfrak{m}) $ be a map
	 of $C$-monomial $G$-posets. Then in $B_C(G)$
	
	$$\widetilde{\Lambda}_{(Y,\mathfrak{m})}= \widetilde{\Lambda}_{(X,\mathfrak{l})} 
	+ \!\!\!\!\sum\limits_{y \in G\backslash Y}\!\!\!\!
	\Ind^G_{G_y}(\widetilde{\Lambda}_{f^y}
	{\rouge \widetilde{\Lambda}_{(]y,\cdot[_Y, {\dgreen\mathfrak{m}^{>y}})}}).$$
	
		$$\widetilde{\Lambda}_{(Y,\mathfrak{m})}= \widetilde{\Lambda}_{(X,\mathfrak{l})} 
	+ \!\!\!\!\sum\limits_{y \in G\backslash Y}\!\!\!\!
	\Ind^G_{G_y}(\widetilde{\Lambda}_{f_y}
	{\rouge\widetilde{\Lambda}_{(]\cdot,y[_Y, {\dgreen\mathfrak{m}_{<y}} )}}).$$

\end{prop}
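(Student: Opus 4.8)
The plan is to realize $\widetilde{\Lambda}_{(Y,\mathfrak{m})}$ as the invariant of a mapping-cylinder poset and then to compute that invariant by a chain decomposition adapted to the fibres of $f$, mimicking the non-monomial argument of Proposition~4.2.7 of \cite{BURN} while keeping track of the monomial data. Set $(Z,\mathfrak{n})=(X*_{f,\lambda}Y,\ \mathfrak{l}*_{f,\lambda}\mathfrak{m})$, the $C$-monomial $G$-poset constructed above. By Lemma \ref{st} we have $\Lambda_{(Z,\mathfrak{n})}=\Lambda_{(Y,\mathfrak{m})}$, and subtracting $[G,1_G]_G$ gives $\widetilde{\Lambda}_{(Z,\mathfrak{n})}=\widetilde{\Lambda}_{(Y,\mathfrak{m})}$. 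The whole problem is thus to expand $\widetilde{\Lambda}_{(Z,\mathfrak{n})}$ in a way that isolates the contribution of $X$ and of each fibre, rather than the expansion of Lemma \ref{forind} used to prove Lemma \ref{st} (which made the $X$-part collapse).

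First I would write $\widetilde{\Lambda}_{(Z,\mathfrak{n})}$ as the alternating sum over $G$-orbits of chains of $Z$, with the empty chain included and contributing $-[G,1_G]_G$, and sort the chains by their intersection with the sub-$G$-poset $Y\subseteq Z$. A chain disjoint from $Y$ is exactly a chain of $X$, and since $\mathfrak{n}$ restricts to $\mathfrak{l}$ on $X$ these chains (including the empty one) contribute precisely $\widetilde{\Lambda}_{(X,\mathfrak{l})}$. Every remaining chain $c$ has a least element $y$ in $c\cap Y$; the order relations of $Z$ then force $c=c_X\sqcup\{y\}\sqcup c_Y$ with $c_X$ a (possibly empty) chain of $f^y=\{x\in X\mid f(x)\le y\}$ and $c_Y$ a (possibly empty) chain of $]y,\cdot[_Y$. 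Because $f$ is $G$-equivariant this sorting is $G$-stable, so the bookkeeping of Lemma \ref{forind} rewrites the total contribution of all chains whose least $Y$-element lies in the orbit of $y$ as $\Ind^G_{G_y}$ applied to a sum over $G_y$-orbits of such pairs $(c_X,c_Y)$.

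The crux is to evaluate this inner sum in $B_C(G_y)$ and to recognise it as the summand $\Ind^G_{G_y}\big(\widetilde{\Lambda}_{f^y}\,\widetilde{\Lambda}_{(]y,\cdot[_Y,\mathfrak{m}^{>y})}\big)$ of the asserted formula; the monomial data must be tracked with care, and this is where I expect the main difficulty. The key point is that along any chain $c$ through $y$ a single character is transported: by Remark \ref{res} the characters attached to the bottom of $c_X$, to $y$, and to the bottom of $c_Y$ all restrict to one and the same character of the stabiliser of $c$. Using this compatibility one can separate the monomial twist carried by $y$ from the purely combinatorial Burnside-ring count of the pairs $(c_X,c_Y)$, the latter being exactly the computation of the non-monomial Proposition~4.2.7 of \cite{BURN}, which by the multiplicativity of Lemma \ref{lefprp}(3) produces the product of the reduced invariants of the two fibres $f^y$ and $]y,\cdot[_Y$. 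The obstacle to be handled rigorously is twofold: reconciling the inner sum, which is organised by the intersection stabilisers of pairs of chains, with the Mackey/double-coset expansion of a product in $B_C(G_y)$ — precisely the rational rearrangement carried out in the proof of Lemma \ref{lefprp}(3) — and controlling the (trivial) characters carried by the empty sub-chains, which is what pins down how the monomial factor attached to $y$ is distributed across the two reduced fibre invariants and whether one obtains unreduced or reduced quantities. Collecting the surviving terms then gives the first displayed identity.

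Finally, the second identity follows from the first by duality. Applying the first formula to the opposite map $(f^{\op},\lambda^{\op}):(X^{\op},\mathfrak{l}^{\op})\to(Y^{\op},\mathfrak{m}^{\op})$ of Remark \ref{opposite}, and using that $\widetilde{\Lambda}$ is invariant under passage to the opposite $C$-monomial $G$-poset (Remark \ref{opposite}) together with the identifications $(f^{\op})^{y}=f_y$, $\,]y,\cdot[_{Y^{\op}}=\,]\cdot,y[_Y$ and $(\mathfrak{m}^{\op})^{>y}=(\mathfrak{m}_{<y})^{\op}$, transforms the first identity into the second.
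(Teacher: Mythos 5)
Your route is the paper's route, step for step: pass to the cylinder $(X*_{f,\lambda}Y,\ \mathfrak{l}*_{f,\lambda}\mathfrak{m})$ and apply Lemma~\ref{st}; split the chains of the cylinder into those contained in $X$ (which yield $\widetilde{\Lambda}_{(X,\mathfrak{l})}$, since $\mathfrak{l}*_{f,\lambda}\mathfrak{m}$ restricts to $\mathfrak{l}$ there) and those with a least element $y$ in $Y$, decomposed as a chain in $f^y$, then $y$, then a chain in $]y,\cdot[_Y$; regroup by the $G$-orbit of $y$ via $\Ind^G_{G_y}$; and deduce the second identity by applying the first to the opposite map, exactly as the paper does with Remark~\ref{opposite}. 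So there is no methodological divergence to report. The genuine gap is that your argument stops precisely where the proof has to happen: you declare the identification of the inner alternating sum with $\widetilde{\Lambda}_{f^y}\widetilde{\Lambda}_{(]y,\cdot[_Y,\mathfrak{m}^{>y})}$ in $B_C(G_y)$ to be ``the crux'', list the two difficulties (orbit-of-pairs indexing versus the Mackey expansion of a product, and the characters carried by empty sub-chains), and resolve neither. The paper does carry this step out: it records that every chain $z_0<\dots<z_n$ of the second type carries the single character $\Res^{G_y}_{G_{z_0,\dots,z_n}}(\mathfrak{m}_y)$ --- your ``transported character'', by Remark~\ref{res} applied in the cylinder --- so that the $f^y$-part enters only combinatorially (the paper accordingly writes $\widetilde{\Lambda}_{(f^y,1_{f^y})}$ in its last display), while the signs $(-1)^{i+j}$ match the product of \emph{reduced} invariants because each empty sub-chain accounts for one factor $-[G_y,\cdot\,]_{G_y}$; the first of your two worries is then routine Mackey bookkeeping.

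Your second worry, however, is not a routine verification: it is the one point where the computation, done literally, does not close, and the paper's own final equality passes over it silently. With $\widetilde{\Lambda}$ as defined (subtracting $[G_y,1_{G_y}]_{G_y}$), the pair of empty sub-chains contributes $(-[G_y,1_{G_y}]_{G_y})(-[G_y,1_{G_y}]_{G_y})=[G_y,1_{G_y}]_{G_y}$ on the product side, whereas the corresponding chain $\{y\}$ of the cylinder contributes $[G_y,\mathfrak{m}_y]_{G_y}$; more generally every pair whose upper chain is empty comes out with the trivial character instead of $\Res(\mathfrak{m}_y)$. Taking $X=\emptyset$ and $Y$ a single fixed point with $\mathfrak{m}_y=\mu\neq 1$ makes the discrepancy visible: the right-hand side of the first identity is $-[G,1_G]_G+[G,1_G]_G=0$, while the left-hand side is $[G,\mu]_G-[G,1_G]_G\neq 0$. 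The chain count only matches if the summand is read as $\widetilde{\Lambda}_{(f^y,1_{f^y})}\big(\Lambda_{(]y,\cdot[_Y,\mathfrak{m}^{>y})}-[G_y,\mathfrak{m}_y]_{G_y}\big)$, equivalently $[G_y,\mathfrak{m}_y]_{G_y}\,\widetilde{\Lambda}_{(f^y,1_{f^y})}\,\widetilde{\Lambda}_{(]y,\cdot[_Y,1)}$, which reduces to the stated formula exactly when the relevant restrictions of $\mathfrak{m}_y$ are trivial. So your skeleton is right and you located the obstruction correctly, but since you neither perform the identification nor settle how the character at $y$ is distributed --- the very question you raise --- the proposal is missing its central step, and settling it requires the amended form of the summand just described.
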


\begin{proof}
	We follow the proof of Proposition 4.2.7 in \cite{BURN}.
	For any $n \in \NN$, any chain $z=z_0 < ... < z_n \in \Sd_n(X* _{f,\lambda} Y)$ can be of two
	types, depending on $z_n \in X$ or $z_n \in Y$. For a sequence $z$ of the first type  we get
	$$\Res^{G_{z_n}}_{G_{z_0,...,z_n}}( {\mathfrak{l}*_{f,\lambda} \mathfrak{m})_n}_{z_n}
	= \Res^{G_{z_n}}_{G_{z_0,...,z_n}}\mathfrak{l}_n.$$	
	Now a sequence $z$ of the second type has a smallest element $y=z_i$ in $Y$, thus, we can write the sequence as 
	$$x_0 < ... < x_{i-1} < y < y_0 < ... < y_{n-i-1}$$
	such that $x_0 < ... < x_{i-1}$ is in $\Sd_{i-1}(f^y)$, and $y_0 < ... < y_{n-i-1}$ 
	is in $\Sd_{n-i-1}(]y,\cdot[_Y)$.  We get
	$$\Res^{G_{z_n}}_{G_{z_0,...,z_n}}( {\mathfrak{l}*_{f,\lambda} \mathfrak{m})_n}_{z_n} 
	= \Res^{G_{y}}_{G_{z_0,...,z_n}}(\mathfrak{m}).$$
	Let $\underline{x}_{i-1}$ denote the chain $x_0 < ... < x_{i-1}$ and
	$\underline{y}_{n-i-1}$ denote the chain $y_0 < ... < y_{n-i-1}$.
     Then, by Lemma \ref{lefprp} and Lemma \ref{st} we get
     $${\rouge\Lambda_{(Y,  \mathfrak{m})}}= 
     \Lambda_{(X* _{f,\lambda} Y, \mathfrak{l}*_{f,\lambda} \mathfrak{m})}
     = \sum\limits_{n \in \NN}(-1)^n
     \big(\Sd_n(X* _{f,\lambda} Y), ( \mathfrak{l}*_{f,\lambda} \mathfrak{m})_n\big)$$
     $$=\!\!\!\!\!\!\!\!\!\!\!\!\sum\limits_{\substack{n \in \NN\\ z_0 < ... < z_n \in \Sd_n(\mathfrak{l}*_{f,\lambda} \mathfrak{m})}}
     \!\!\!\!\!\!\!\!\!\!\!\!(-1)^n\big[G_{z_0,...,z_n}, \Res^{G_{z_n}}_{G_{z_0,...,z_n}}( \mathfrak{l}*_{f,\lambda} \mathfrak{m})_n\big]_G$$	
      $$=\sum\limits_{n\in \NN}(-1)^n\big(\Sd_n(X), \mathfrak{l}_n\big)$$
        $$+ \!\! \sum\limits_{y \in [G\dom Y]} \!\!\!\!\!\Ind^G_{G_y}\sum\limits_{i=0}^n
      \!\!\!\!\! \sum\limits_{\substack{\underline{x}_{i-1}\in \Sd_{i-1}(f^y)\\
       		\underline{y}_{n-i-1}\in \Sd_{n-i-1}(]y,\cdot[_{G_y})}}
      \!\!\!\!\!\!\!\!\!\!\!\!\!\!\!\!\!\!
      \big[G_{\underline{x}_{i-1},y,\underline{y}_{n-i-1}}, 
      \Res^{G_{y}}_{G_{\underline{x}_{i-1},y,\underline{y}_{n-i-1}}}\mathfrak{m}_y\big]_{G_y}$$	
      $$=\Lambda_{(X, \mathfrak{l})}+ \sum\limits_{y \in [G\dom Y]} \!\!\!\!\!\Ind^G_{G_y}
      \big(\widetilde{\Lambda}_{(f^y,1_{f^y})}{\rouge \widetilde{\Lambda}_{(]y,\cdot[_Y, {\dgreen \mathfrak{m}_{>y}})}}\big).$$
     For the second assertion we consider the opposite map 
     $$(f,\lambda): (X^{\op},\mathfrak{l}^{\op}) \rightarrow (Y^{\op},\mathfrak{m}^{\op})$$
     Since we have $\Lambda_{(X, \mathfrak{l})}= \Lambda_{(X^{\op}, \mathfrak{l}^{\op})}$ by Remark~\ref{opposite},
     the result follows.
\end{proof}
\begin{cor} Let $(f,\lambda): (X, \mathfrak{l}) \rightarrow (Y, \mathfrak{m}) $ be a map of $C$-monomial $G$-posets. If $\Lambda_{f^y}=0$ {\rouge for all} $y\in Y$ (resp. if $\Lambda_{f_y}=0$ {\rouge for all} $y\in Y$), then $\Lambda_{X, \mathfrak{l}}=\Lambda_{Y, \mathfrak{m}}$. 
\end{cor}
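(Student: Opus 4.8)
The plan is to obtain the corollary as an essentially immediate consequence of the previous Proposition. First I would record that the desired conclusion $\Lambda_{(X,\mathfrak{l})}=\Lambda_{(Y,\mathfrak{m})}$ is equivalent to the equality of reduced invariants $\widetilde{\Lambda}_{(X,\mathfrak{l})}=\widetilde{\Lambda}_{(Y,\mathfrak{m})}$: indeed, by definition $\widetilde{\Lambda}_{(X,\mathfrak{l})}=\Lambda_{(X,\mathfrak{l})}-[G,1_G]_G$ and $\widetilde{\Lambda}_{(Y,\mathfrak{m})}=\Lambda_{(Y,\mathfrak{m})}-[G,1_G]_G$, so the two reduced invariants differ from the ordinary ones by one and the same element of $B_C(G)$. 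Hence it suffices to show that the correction sum occurring in the Proposition vanishes.

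For the first case, I would invoke the first identity of the Proposition, namely $\widetilde{\Lambda}_{(Y,\mathfrak{m})}=\widetilde{\Lambda}_{(X,\mathfrak{l})}+\sum_{y\in G\backslash Y}\Ind^G_{G_y}\big(\widetilde{\Lambda}_{f^y}\,\widetilde{\Lambda}_{(]y,\cdot[_Y,\mathfrak{m}^{>y})}\big)$. Here each fibre $f^y$ carries the trivial monomial structure $1_{f^y}$, so the factor $\widetilde{\Lambda}_{f^y}$ is the reduced Lefschetz invariant of $f^y$ in $B_C(G_y)$. The decisive step is that the hypothesis on the fibres forces this factor to vanish for every $y\in Y$; once $\widetilde{\Lambda}_{f^y}=0$, each summand $\Ind^G_{G_y}\big(\widetilde{\Lambda}_{f^y}\,\widetilde{\Lambda}_{(]y,\cdot[_Y,\mathfrak{m}^{>y})}\big)$ is zero, the whole sum collapses, and we obtain $\widetilde{\Lambda}_{(Y,\mathfrak{m})}=\widetilde{\Lambda}_{(X,\mathfrak{l})}$, hence $\Lambda_{(X,\mathfrak{l})}=\Lambda_{(Y,\mathfrak{m})}$.

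For the second case (hypothesis on the $f_y$), I would run the identical argument with the second identity of the Proposition, in which the fibres $f^y$ are replaced by the $f_y$ and the open intervals $]y,\cdot[_Y$ by $]\cdot,y[_Y$. Alternatively, I would reduce to the first case by passing to the opposite monomial posets via Remark~\ref{opposite}: the functor $(X,\mathfrak{l})\mapsto(X^{\op},\mathfrak{l}^{\op})$ preserves Lefschetz invariants and sends $(f,\lambda)$ to a map $(X^{\op},\mathfrak{l}^{\op})\to(Y^{\op},\mathfrak{m}^{\op})$ whose fibre over $y$ is precisely $f_y$ (with the opposite order). Since $\Lambda$ is invariant under taking opposites, the vanishing condition on $f_y$ is exactly the first-case hypothesis for the opposite map, and the conclusion $\Lambda_{(X^{\op},\mathfrak{l}^{\op})}=\Lambda_{(Y^{\op},\mathfrak{m}^{\op})}$ transports back to $\Lambda_{(X,\mathfrak{l})}=\Lambda_{(Y,\mathfrak{m})}$.

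The step I expect to require the most care is matching the stated hypothesis to the factor that actually appears. The quantity multiplying $\widetilde{\Lambda}_{(]y,\cdot[_Y,\mathfrak{m}^{>y})}$ in the Proposition is the \emph{reduced} invariant $\widetilde{\Lambda}_{f^y}=\Lambda_{f^y}-[G_y,1_{G_y}]_{G_y}$, and it is this reduced invariant that must be zero for the summands to drop out. Since $[G_y,1_{G_y}]_{G_y}$ is the identity element of $B_C(G_y)$, the hypothesis is most naturally read as the vanishing of the reduced Lefschetz invariant $\widetilde{\Lambda}_{f^y}$ of each fibre (equivalently, each fibre $f^y$ is $C$-acyclic), which is the genuine analogue of Quillen's contractibility assumption. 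The only remaining verification is that the opposite-poset reduction truly exchanges $f^y$ with $f_y$ and is compatible with $\Ind^G_{G_y}$, and this is routine from Remark~\ref{opposite} together with the definitions of $f^y$ and $f_y$.
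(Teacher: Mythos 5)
Your proof is correct and is exactly the argument the paper intends: the corollary is an immediate consequence of the preceding Proposition, since under the hypothesis every correction term $\Ind^G_{G_y}\big(\widetilde{\Lambda}_{f^y}\,\widetilde{\Lambda}_{(]y,\cdot[_Y,\mathfrak{m}^{>y})}\big)$ (resp.\ its analogue with $f_y$ and $]\cdot,y[_Y$) vanishes, and the reduced and unreduced invariants differ by the fixed element $[G,1_G]_G$. You are also right to flag that the stated hypothesis must be read as $\widetilde{\Lambda}_{f^y}=0$ (the reduced invariant, i.e.\ $\Lambda_{f^y}=[G_y,1_{G_y}]_{G_y}$), since with the literal unreduced reading the statement fails already for the empty map into a point; this careful reading, together with your opposite-poset reduction of the second case via Remark~\ref{opposite}, matches the paper's (implicit) proof.
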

\begin{rem}
The assumption of this corollary is fulfilled in particular if $f:\widehat{X}\to \widehat{Y}$ admits a right adjoint $g$, in other words if there exists a map of posets $g:Y\to X$ such that $f(x)\leq y \Leftrightarrow x\leq g(y)$ for any $x\in X$ and $y\in Y$, i.e. equivalently if $f\circ g(y)\leq y$ and $g\circ f(x)\leq x$ for any $x\in X$ and any $y\in Y$.
\end{rem}
Now we set some notation. Given a $C$-monomial $G$-set $(X,\mathfrak{l})$, we can rewrite
its Lefschetz invariant as 
$$\Lambda_{(X, \mathfrak{l})}=
\!\!\!\!\!\!\!\!\!\sum\limits_{x_0 < ... < x_n \in_G X}\!\!\!\!\!\!\!\!
(-1)^n \big[G_{x_0, ... x_n}, \Res^{G_{x_0}}_{G_{x_0, ..., x_n}}( \mathfrak{l}_{x_0})\big]_G$$
$$= \!\!\!\!\!\!\!\!\!\!\sum\limits_{(V,\nu)\in_G \ch(G)}\!\!\!\!\!\!\!\!
\gamma^{X,\mathfrak{l}}_{V,\nu}[V,\nu]_G$$

where 
$$\gamma^{X,\mathfrak{l}}_{V,\nu}= 
\!\!\!\!\!\!\!\!\!\!\!\sum\limits_{\substack{x_0 < ... < x_n \in_G X \\ 
		(G_{x_0, ..., x_n}, \Res^{G_{x_0}}_{G_{x_0, ..., x_n}}\mathfrak{l}_{x_0})
		=_G (V,\nu)}}\!\!\!\!\!\!\!\!\!\!\!\!\!\!\!\!\!\!\!\! (-1)^n
	= \frac{1}{\rouge|N_G(V,\nu):V|}
	\!\!\!\!\!\!\!\sum\limits_{\substack{x_0 < ... < x_n \in X \\
	(G_{x_0, ..., x_n}, \Res^{G_{x_0}}_{G_{x_0, ..., x_n}}\mathfrak{l}_{x_0})
	= (V,\nu)}}\!\!\!\!\!\!\!\!\!\!\!\!\!\!\!\!\!\!\!\!\!(-1)^n.$$

Given a $C$-monomial $G$-poset $(X, \mathfrak{l})$ 
we let the set $(X, \mathfrak{l})^{U,\mu}$ to be

$$(X, \mathfrak{l})^{U,\mu}=\{x \in X^U \mid \Res^{G_x}_{U}\mathfrak{l}_x=\mu \}$$

where $(U, \mu)$ is a subcharacter of $G$.
Then given a $C$-subcharacter $(U,\mu)\in\ch(G)$ we have
$$\chi\big((X, \mathfrak{l})^{U,\mu}\big)= 
\!\!\!\!\!\!\!\!\!\sum\limits_{\substack{n \in \NN \\ 
x_0 < ... < x_n \in X^U \\ \Res^{G_{x_0}}_{G_{x_0, ..., x_n}}\mathfrak{l}_{x_0}=\mu}}\!\!\!\!\!\!\!\!\!
(-1)^n
=\!\!\!\! \sum\limits_{\substack{(V,\nu) \in \ch(G)\\ U\subseteq V \\ \Res^V_U\nu=\mu}}\!\!\!\!
{\rouge m^{X,\mathfrak{l}}_{V,\nu}}$$

where 
$$m^{X,\mathfrak{l}}_{V,\nu}=\!\!\!\!\!\!\!\!\!\!\!\!\!\!\!\!\!\!\!\! \sum\limits_{\substack{n \in \NN \\ 
		x_0 < ... < x_n \in X \\ (G_{x_0, ..., x_n},\Res^{G_{x_0}}_{G_{x_0, ..., x_n}}
		\mathfrak{l}_{x_0})=(V,\nu)}}\!\!\!\!\!\!\!\!\!\!\!\!\!\!\!\!\!\!\!\!\!\!\!\!\!\!\!\!
	(-1)^n.$$
Now $|N_G(V,\nu):V|m^{X,\mathfrak{l}}_{V,\nu}= \gamma^{X,\mathfrak{l}}_{V,\nu}.$ Using this fact
we prove the following lemma.
\begin{lemma}\label{Lefeu} Let $(X, \mathfrak{l})$ and $(Y, \mathfrak{m})$ 
	be $C$-monomial $G$-posets then $\Lambda_{(X, \mathfrak{l})} =
	 \Lambda_{(Y, \mathfrak{m})}$ if and only if 
	 $\chi\big((X, \mathfrak{l})^{U, \mu}\big)= \chi\big((Y, \mathfrak{m})^{U, \mu}\big)$ for {\rouge every} $C$-subcharacter $(U, \mu)$ of $G$.
\end{lemma}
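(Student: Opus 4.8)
The plan is to route both implications through the intermediate quantities $m^{X,\mathfrak{l}}_{V,\nu}$, using two facts already recorded: first, that the classes $[V,\nu]_G$, as $(V,\nu)$ runs over $G$-representatives of $\ch(G)$, form a $\ZZ$-basis of $B_C(G)$; and second, the identity $|N_G(V,\nu):V|\,m^{X,\mathfrak{l}}_{V,\nu}=\gamma^{X,\mathfrak{l}}_{V,\nu}$. Since $\Lambda_{(X,\mathfrak{l})}=\sum_{(V,\nu)\in_G\ch(G)}\gamma^{X,\mathfrak{l}}_{V,\nu}[V,\nu]_G$, comparing coordinates in this basis shows that $\Lambda_{(X,\mathfrak{l})}=\Lambda_{(Y,\mathfrak{m})}$ if and only if $\gamma^{X,\mathfrak{l}}_{V,\nu}=\gamma^{Y,\mathfrak{m}}_{V,\nu}$ for every $(V,\nu)$. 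As $|N_G(V,\nu):V|$ is a nonzero integer, this is in turn equivalent to $m^{X,\mathfrak{l}}_{V,\nu}=m^{Y,\mathfrak{m}}_{V,\nu}$ for every $(V,\nu)$ (both $\gamma$ and $m$ being constant on $G$-orbits of subcharacters, since $G$-translation of a chain preserves the sign $(-1)^n$, so it suffices to test representatives). Thus the lemma reduces to the single claim that the families $\big(m^{X,\mathfrak{l}}_{V,\nu}\big)_{(V,\nu)}$ and $\big(\chi((X,\mathfrak{l})^{U,\mu})\big)_{(U,\mu)}$ determine each other.

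One implication is immediate from the formula $\chi\big((X,\mathfrak{l})^{U,\mu}\big)=\sum_{(V,\nu)\ge(U,\mu)}m^{X,\mathfrak{l}}_{V,\nu}$ established above, the sum running over the subcharacters $(V,\nu)$ with $U\le V$ and $\Res^V_U\nu=\mu$: equality of all the $m$-values forces equality of all the $\chi$-values, which gives the ``only if'' direction.

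For the converse I would invert this relation. The crucial structural observation is that a strict inequality $(V,\nu)>(U,\mu)$ forces $U\subsetneq V$, because $V=U$ together with $\Res^V_U\nu=\mu$ would give $\nu=\mu$. Hence the relation is unitriangular with respect to subgroup order: writing $\chi\big((X,\mathfrak{l})^{U,\mu}\big)=m^{X,\mathfrak{l}}_{U,\mu}+\sum_{(V,\nu)>(U,\mu)}m^{X,\mathfrak{l}}_{V,\nu}$, every term of the remaining sum has $|V|>|U|$. Because $X$ has only finitely many chains, only finitely many $m^{X,\mathfrak{l}}_{V,\nu}$ are nonzero, so all these sums are finite even when $C$ is infinite. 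A downward induction on $|U|$, with base case $U=G$ (where $\chi\big((X,\mathfrak{l})^{G,\mu}\big)=m^{X,\mathfrak{l}}_{G,\mu}$, as $V\ge G$ forces $V=G$ and $\nu=\mu$), then expresses each $m^{X,\mathfrak{l}}_{V,\nu}$ as an integer combination of the $\chi$-values; equivalently this is M\"obius inversion over the finite portion of the poset $\ch(G)$ that is relevant. Consequently, if $\chi\big((X,\mathfrak{l})^{U,\mu}\big)=\chi\big((Y,\mathfrak{m})^{U,\mu}\big)$ for every $(U,\mu)$, the inversion yields $m^{X,\mathfrak{l}}_{V,\nu}=m^{Y,\mathfrak{m}}_{V,\nu}$ for every $(V,\nu)$, and tracing back through the equivalences of the first paragraph gives $\Lambda_{(X,\mathfrak{l})}=\Lambda_{(Y,\mathfrak{m})}$.

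The main obstacle is this converse inversion step; its validity rests entirely on the unitriangularity just noted, namely that a strict increase in the subcharacter forces a strict increase in the underlying subgroup, so that the system can be solved by descending induction on $|U|$. Everything else is bookkeeping in the basis $\{[V,\nu]_G\}$ and the passage between $\gamma$ and $m$ via the nonzero factor $|N_G(V,\nu):V|$.
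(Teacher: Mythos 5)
Your proposal is correct and follows essentially the same route as the paper: both reduce equality of Lefschetz invariants to equality of the coefficients $\gamma^{X,\mathfrak{l}}_{V,\nu}$ via the basis $\{[V,\nu]_G\}$, pass to the $m^{X,\mathfrak{l}}_{V,\nu}$ through the nonzero factor $|N_G(V,\nu):V|$, and invert the relation $\chi\big((X,\mathfrak{l})^{U,\mu}\big)=\sum_{(V,\nu)\geq(U,\mu)}m^{X,\mathfrak{l}}_{V,\nu}$ using the fact that a strict increase of subcharacters forces a strict increase of subgroups. Your descending induction on $|U|$ is just the paper's upper-triangular-matrix argument in inductive form (and your explicit remark that only finitely many $m$-values are nonzero tidies up the case of infinite $C$, which the paper's matrix formulation leaves implicit).
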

\begin{proof}
Assume $\Lambda_{(X, \mathfrak{l})}= \Lambda_{(Y, \mathfrak{m})}.$ Then
$$\sum\limits_{(V,\nu)\in_G \ch(G)}\!\!\!\!\!\!\!
\gamma^{X,\mathfrak{l}}_{V,\nu}[V,\nu]_G
= \!\!\!\!\!\!\!\!\sum\limits_{(V,\nu)\in_G \ch(G)}\!\!\!\!\!\!\!
\gamma^{Y,\mathfrak{m}}_{V,\nu}[V,\nu]_G$$
$$\sum\limits_{(V,\nu)\in_G \ch(G)}\!\!\!\!\!\!\!
(\gamma^{X,\mathfrak{l}}_{V,\nu}-\gamma^{Y,\mathfrak{m}}_{V,\nu})[V,\nu]_G=0.$$
So $\gamma^{X,\mathfrak{l}}_{V,\nu}=\gamma^{Y,\mathfrak{m}}_{V,\nu}$ and then 
$m^{X,\mathfrak{l}}_{V,nu}= m^{Y,\mathfrak{m}}_{V,nu}$ for {\rouge every} $C$-subcharacter $(V,\nu)$
of $G$. We get 
$$\sum\limits_{(U,\mu)\leq(V,\nu) \in_{G} \ch(G)}\!\!\!\!\!\!\!\!\!\!\!\!\!
m^{X,\mathfrak{l}}_{V,\nu}
= \!\!\!\sum\limits_{(U,\mu)\leq(V,\nu) \in_{G} \ch(G)}\!\!\!\!\!\!\!\!\!\!\!\!\!
m^{Y,\mathfrak{m}}_{V,\nu}.$$
Thus, $\chi\big((X, \mathfrak{l})^{U,\mu}\big)= \chi\big((Y, \mathfrak{m})^{U,\mu}\big)$
for {\rouge every} $C$-subcharacter $(U,\mu)$ of $G$.\par
Conversely, assume that $\chi\big((X, \mathfrak{l})^{U,\mu}\big)= \chi\big((Y, \mathfrak{m})^{U,\mu}\big)$
for {\rouge every} $C$-subcharacter $(U,\mu)$ of $G$. Then
$$\sum\limits_{(U,\mu)\leq(V,\nu) \in \ch(G)}\!\!\!\!\!\!\!\!\!\!\!\!\!
m^{X,\mathfrak{l}}_{V,\nu}
= \sum\limits_{(U,\mu)\leq(V,\nu) \in \ch(G)}\!\!\!\!\!\!\!\!\!\!\!\!\!
m^{Y,\mathfrak{m}}_{V,\nu},$$
$$\sum\limits_{(U,\mu)\leq(V,\nu) \in \ch(G)}\!\!\!\!\!\!\!\!\!\!\!\!\!
(m^{X,\mathfrak{l}}_{V,\nu}-{\rouge m^{Y,\mathfrak{m}}_{V,\nu}})=0.$$
Let $z$ be the matrix with the coefficients 
$$z(U,\mu;V,\nu)= \lfloor (U,\mu) \leq (V,\nu) \rfloor = 
\begin{cases}
1& \text{if}\,\, (U,\mu) \leq (V,\nu)\\
0& \text{otherwise}
\end{cases}$$ for any $C$-subcharacters $(U,\mu),\,(V,\nu)$. If we list the $C$-subcharacters
in non-decreasing order of size of the subgroups, the matrix $z$ is upper triangular with
nonzero diagonal coefficients. Thus, $z$ is nonsingular and so 
$m^{X,\mathfrak{l}}_{V,\nu}={\rouge m^{Y,\mathfrak{m}}_{V,\nu}.}$ This implies 
$\gamma^{X,\mathfrak{l}}_{V,\nu}={\rouge\gamma^{Y,\mathfrak{m}}_{V,\nu}}$. We get
$$\Lambda_{(X,  \mathfrak{l})}= \sum\limits_{(V,\nu)\in_G \ch(G)}\!\!\!\!\!\!\!
\gamma^{X,\mathfrak{l}}_{V,\nu}[V,\nu]_G
= \sum\limits_{(V,\nu)\in_G \ch(G)}\!\!\!\!\!\!\!
\gamma^{Y,\mathfrak{m}}_{V,\nu}[V,\nu]_G
=\Lambda_{(Y,  \mathfrak{m})}.$$ 
This proves the lemma.
\end{proof}

\section{Generalized tensor induction}

Let $G$ and $H$ be finite groups. A set $U$ is a $(G,H)$-biset
if $U$ is a left $G$-set and right $H$-set such that the $G$-action
and the $H$-action commute. Any $(G,H)$-biset $U$ is a left
$(G\times H)$-set with the following action:
$$\forall u \in U,\,(g,h) \in G\times H\,\,\, (g,h)\cdot u= guh^{-1}.$$
A  $C$-monomial $(G\times H)$-set $(U,\lambda)$ will be called
a $C$-monomial $(G,H)$-biset, and usually denoted by $U_\lambda$ for simplicity.\par
Now let $U_\lambda$ be  a $C$-monomial $(G\times H)$-set and $u,\,\up \in U$. Then the set of morphisms from $u$ to $\up$ in $\widehat{U}$ is
$$\Hom_{\widehat{U}}(u,\up)=\{(g,h)\in G\times H\mid gu = \up h\}.$$
If $(g,h) \in \Hom_{\widehat{U}}(u, \up)$, 
we denote the image of $(g,h)$
under $\lambda$ by $\lambda(g,h,u,\up)$.\par
Let $U_\lambda$ be a  $C$-monomial $(G,H)$-biset
and $V_\rho$  be a $C$-monomial $(H,K)$-biset. 
Consider the set
$$U_\lambda\circ V_\rho=\{(u,v)\in U\times V\mid \forall h \in H_u\cap H_v,\, \lambda(1,h,u,u)\rho(h,1,v,v)=1\}.$$
The set $U_\lambda\circ V_\rho$ is an $H$-set with the action 
$$\forall (u,v)\in U_\lambda\circ V_\rho,\, \forall h \in H,\, h(u,v)= (uh^{-1}, hv).$$
Indeed, the condition that we impose on $U_\lambda\circ V_\rho$ amounts to saying that given $(u, v) \in U_\lambda\circ V_\rho,$ the 
	linear character {\dgreen $\xi_{u,v}:h\mapsto \lambda(1,h,u,u)\rho(h,1,v,v)$ of $H_u\cap H_v$ is  trivial. Moreover we have $\xi_{ux,x^{-1}v}(h)=\xi_{u,v}(xhx^{-1})=1$ for $x\in H$ and $h\in H_{ux}\cap H_{x^{-1}v}$, i.e. $xhx^{-1}\in H_u\cap H_v$.}
%${\rouge \ssres^H_{H_u\cap H_v}(\lambda\times\rho)_{(u, v)}}: H_u\cap H_v \rightarrow C$ 
	%is trivial. Moreover, we have 
	%$${\rouge\ssres^H_{^x(H_u\cap H_v)}(\lambda\times\rho)_{(ux^{-1}, xv)}}= {^x\ssres^H_{H_u\cap H_v}}{\rouge(\lambda\times\rho)_{(u, v)}}$$
	%for any $x \in H$.
	\par
We let $U_\lambda\circ_H V_\rho$ denote the set of $H$-orbits on $U_\lambda\circ V_\rho$ and $(u,_{_H}v)$ denote the
$H$-orbit containing $(u,v)$. The set $U_\lambda\circ_H V_\rho$
is $(G,K)$-biset with the action 
$$(u,_{_H}v)\in U_\lambda\circ_HV_\rho,\, (g,k)\in G\times K,\, g(u,_{_H}v)k = (gu,_{_H}vk).$$
We obtain a $C$-monomial $(G,K)$-biset $(U_\lambda\circ_H V_\rho, \lambda\times\rho)$, where $\lambda\times\rho$ is defined as follows: if $(u,_{_H}v)$ and $(u',_{_H}v')\in U_\lambda\circ_HV_\rho$ and $(g,k)\in G\times K$ are such that $g(u,_{_H}v)=(u',_{_H}v')k$, then there exists $h\in H$ such that $gu=u'h$ and $hv=v'k$. This element $h$ need not be unique, but it is well defined up to multiplication on the right by an element of $H_u\cap{H_v}$. We set
$$(\lambda\times\rho)\big(g,k,(u,_{_H}v),(u',_{_H}v')=\lambda(g,h,u,u')\rho(h,k,v,v'),$$
which does not depend on the choice of $h$, by the defining property of $U_\lambda\circ V_\rho$.
Note that {\dgreen $U_\lambda\circ_H V_\rho = U\times_HV$ when $V$ is a left free $(H,K)$-biset, or when $\lambda$ and $\rho$ are both equal to the trivial functor}.\medskip\par
Given a $C$-monomial $G$-poset $(X, \mathfrak{l})$, we let $t_{U, \lambda}(X, \mathfrak{l})$ be the set of $G$-equivariant maps $f: U \rightarrow X $ such that 
$$\mathfrak{l}\big(g, f(u), f(u)\big)= \lambda(g,1,u,u)$$
 for all  $ u \in U$ and 
{\rouge $g \in G_u$.} Then $t_{U, \lambda}(X, \mathfrak{l})$ is an $H$-poset with the action $(hf)(u) = f(uh)$, 
for any $h \in H,$ for any $f \in t_{U, \lambda}(X, \mathfrak{l}),$ for any $u \in U$. The order $\leq$ is given as follows:

$$ \forall f, \fp \in  t_{U, \lambda}(X, \mathfrak{l}),\, f \leq \fp \Leftrightarrow \forall u \in U, f(u)\leq f'(u) \, \text{in}\, X.$$

Now we define a functor $\mathfrak{L}_{U, \lambda} : \widehat{t_{U, \lambda}(X, \mathfrak{l})} \rightarrow \bullet_C$. Let $f$, $ \fp \in t_{U, \lambda}(X, \mathfrak{l})$ and $h \in H$ 
such that $hf \leq \fp$. We choose a set $[G\backslash U]$ of representatives of $G$-orbits of $U$. Then for all $u \in U$ there exist some
 $\ghu \in G$  and a unique $\sshu \in [G\backslash U]$ such that 
 $$uh= \ghu \sshu.$$ 
Since $hf \leq f'$, we get $\ghu f\big(\sshu\big)\leq \fp(u),$  and we set

$$\mathfrak{L}_{U, \lambda}(h, f, \fp)= \prod\limits_{u \in [G\backslash U]} \mathfrak{l}\Big(\ghu, f\big(\sshu\big), \fp\big(u\big)\Big)\lambda^{-1}\big(\ghu,h,\sshu,u\big).$$

Now we show that this definition  does not depend on the choice of $\ghu$. Assume that there exist 
$\ghu$, $\gphu \in G$ such that 
$$uh= \ghu\sshu=\gphu \sshu.$$ 
So there exists $w \in G_{\sshu}$ such that $\ghu = \gphu w$. We get
$${\rouge \mathfrak{l}\Big(w,f\big(\sshu\big),f\big(\sshu\big)\Big)= \lambda\big(w,1,\sshu,\sshu\big).}$$

Furthermore, we get the following commutative diagram: 
{\dgreen
$$\xymatrix@R=4ex@C=3ex{
\sigma_h(u)\ar[rr]^-w\ar[rd]_-{g_{h,u}}&&\sigma_h(u)\ar[ld]^{g'_{h,u}}\\
&u&
}
$$
}
%\begin{picture}(10,80)
%
%\put(165,45){$\sshu$}
%\put(240,45){$\sshu$}
%\put(212,0){$u$ }
%\put(285,0){$.$ }
%
%\put(195,48){\vector(1,0){44}}
%\put(250,38){\vector(-1,-1){30}}
%\put(180,38){\vector(1,-1){30}}

%\put(250,22){$\gphu$}
%\put(215,50){$w$}
%\put(162,22){$\ghu$}
%\end{picture}

Thus,
\begin{align*}
\mathfrak{L}_{U, \lambda}(h, f, \fp)&= \prod\limits_{u \in [G\backslash U]} \mathfrak{l}\Big(\ghu, f\big(\sshu\big), \fp\big(u\big)\Big)\lambda^{-1}\big(\ghu, h,\sshu,u\big) \\
&=\prod\limits_{u \in [G\backslash U]} \mathfrak{l}\Big(\gphu w, f\big(\sshu\big), \fp\big(u\big)\Big)\lambda^{-1}\big(\gphu w, h,\sshu,u\big) \\
&=\prod\limits_{u \in [G\backslash U]} \mathfrak{l}\Big(\gphu, f\big(\sshu\big), \fp\big(u\big)\Big)\lambda^{-1}\big(\gphu , h,\sshu,u\big).\\
\end{align*}
\begin{defn}
The above construction $T_{U,\lambda}: (X,\mathfrak{l}) \mapsto \big(t_{U, \lambda}(X, \mathfrak{l}), \mathfrak{L}_{U, \lambda}\big)$ is called the
 {\em generalized tensor induction for $C$-monomial $G$-posets}, associated to $(U,\lambda)$. 
\end{defn}
\begin{lemma}
Let $G$ and $K$ be finite groups and $U$ be a $(G,K)$-biset. Then there exists a bijection
between the sets 
$\{(u,t)\mid u \in [G\backslash U /K],\, t \in  [(K\cap G^u) \backslash K]\}$ and $[G\backslash U]$.
\end{lemma}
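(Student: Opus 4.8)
The plan is to view $[G\backslash U]$ as a set of representatives for the left $G$-orbits on $U$, and to sort these orbits according to the residual right $K$-action. First I would recall that for $u\in U$ the set $G^u=\{k\in K\mid uk\in Gu\}$ is a subgroup of $K$ (so that $K\cap G^u=G^u$), and that it is precisely the stabilizer of the left $G$-orbit $Gu$ for the right action of $K$ on the set $G\backslash U$ of left $G$-orbits given by $(Gw)\cdot k=G(wk)$. The equivalence $uk\in Gu\Leftrightarrow \exists g\in G,\ gu=uk$ makes this identification immediate.

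Next I would observe that the $K$-orbits on the right $K$-set $G\backslash U$ are exactly the $(G,K)$-double cosets, so that $[G\backslash U/K]$ is a set of representatives of these orbits and
$$G\backslash U=\bigsqcup_{u\in[G\backslash U/K]}(Gu)\cdot K .$$
For a fixed double-coset representative $u$, the orbit--stabilizer theorem for the right $K$-action yields a bijection $(K\cap G^u)\backslash K\to (Gu)\cdot K$ sending $(K\cap G^u)t$ to $G(ut)$. Indeed, $G(ut)=G(ut')$ holds iff $t(t')^{-1}\in G^u$, iff $(K\cap G^u)t=(K\cap G^u)t'$; so letting $t$ run over $[(K\cap G^u)\backslash K]$ lists the orbits in $(Gu)\cdot K$ exactly once each.

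Combining the two displays, the assignment $(u,t)\mapsto$ the unique element of $[G\backslash U]$ belonging to the orbit $G(ut)$ is the desired bijection. Its inverse sends $w\in[G\backslash U]$ to the pair $(u,t)$, where $u$ is the representative in $[G\backslash U/K]$ with $w\in GuK$ and $t$ is the representative in $[(K\cap G^u)\backslash K]$ determined by $Gw=G(ut)$; well-definedness of this inverse is exactly the content of the previous paragraph.

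The only genuinely delicate points are bookkeeping ones: checking that the stabilizer of $Gu$ in $K$ is $K\cap G^u$ rather than some conjugate, and keeping the handedness of the cosets consistent (right cosets $(K\cap G^u)\backslash K$, matching the right $K$-action). Once these are pinned down, the statement is just the standard decomposition of a $(G\times K)$-set into left $G$-orbits grouped according to $(G,K)$-double cosets, so I expect no real obstacle beyond this routine verification.
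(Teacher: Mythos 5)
Your proposal is correct and takes essentially the same route as the paper: your map sending $(u,t)$ to the representative in $[G\backslash U]$ of the orbit $G(ut)$ is exactly the paper's $\psi(u,t)=\sigma_t(u)$, defined by writing $ut=g_{t,u}\sigma_t(u)$ with $g_{t,u}\in G$ and $\sigma_t(u)\in[G\backslash U]$. The only difference is one of completeness: the paper stops after defining $\psi$, while you also supply the (routine) verification of bijectivity via the decomposition of $G\backslash U$ into $K$-orbits indexed by $[G\backslash U/K]$ and the orbit--stabilizer bijection $(K\cap G^u)\backslash K\to (Gu)\cdot K$, details the paper leaves implicit.
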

\begin{proof}
Let $u \in [G\backslash U /K]$ and $t \in [(K\cap G^u) \backslash K]$ then there exist some
$\gtu \in G$ and a unique $\sstu \in [G\backslash U]$ such that 
$$ut= \gtu\sstu.$$ 
We define
$\psi : \{(u,t)\mid u \in [G\backslash U /K],\, t \in  [(K\cap G^u) \backslash K]\} \rightarrow [G\backslash U]$
by $\psi(u,t)=\sstu$.

\end{proof}
\begin{lemma}Let $G$ and $H$ be finite groups, $(U,\lambda)$ be a monomial $(G, H)$-biset and $(X,\mathfrak{l})$ be a $C$-monomial $G$-poset. 
	\begin{enumerate}
		    \item $\big(t_{U, \lambda}(X, \mathfrak{l}), \mathfrak{L}_{U, \lambda}\big)$ is a $C$-monomial $H$-poset.
			\item $\big(t_{U, \lambda}(X, \mathfrak{l}), \mathfrak{L}_{U, \lambda}\big)$ does not depend on the choice of representative set $[G\backslash U]$, up to isomorphism.
    \end{enumerate}
\end{lemma}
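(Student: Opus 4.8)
The plan is to treat the two assertions separately, the first being the heart of the matter. For (1), since the underlying $H$-poset structure on $t_{U,\lambda}(X,\mathfrak{l})$ has already been described, it remains only to check that $\mathfrak{L}_{U,\lambda}$ is a genuine functor $\widehat{t_{U,\lambda}(X,\mathfrak{l})}\to\bullet_C$, its well-definedness with respect to the choice of the elements $g_{h,u}$ having been established just before the definition. (One may also record, by applying the functoriality of $\lambda$ to the commuting morphisms $(g,1)$ and $(1,h^{-1})$ at $u$, the identity $\lambda(g,1,uh,uh)=\lambda(g,1,u,u)$ for $g\in G_u$, which is precisely what guarantees that $hf$ again lies in $t_{U,\lambda}(X,\mathfrak{l})$.) The identity axiom is immediate: for $h=1$ and $\fp=f$ one takes $\sshu=u$ and $\ghu=1$ for each $u\in[G\backslash U]$, so every factor of the defining product equals $\mathfrak{l}(1,f(u),f(u))\lambda^{-1}(1,1,u,u)=1$.

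The composition axiom is the main computation. First I would establish the recursion governing the chosen elements: writing $v=\sshpu$ and using $u(\hp h)=(u\hp)h$ together with uniqueness of representatives, one obtains $\sshphu=\sigma_h(\sshpu)$ and $\ghphu=\ghpu\,\ghsshpu$. Inserting these into $\mathfrak{L}_{U,\lambda}(\hp h,f,\fpp)$, I would split each factor by functoriality, routing the $\mathfrak{l}$-factor through the intermediate point $\fp(v)$ — the two required inequalities $g_{h,v}f(\sigma_h(v))\le \fp(v)$ and $\ghpu\fp(v)\le \fpp(u)$ coming respectively from $hf\le\fp$ and $\hp\fp\le\fpp$ — and routing the $\lambda$-factor through the intermediate object $v$, using the morphisms $(g_{h,v},h)\colon\sigma_h(v)\to v$ and $(\ghpu,\hp)\colon v\to u$ in $\widehat{U}$. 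Collecting the factors indexed by $\hp$ gives exactly $\mathfrak{L}_{U,\lambda}(\hp,\fp,\fpp)$ term by term; the remaining factors, after re-indexing along the bijection $v=\sshpu$ of $[G\backslash U]$ (induced by right multiplication by $\hp$ on $G$-orbits), give $\mathfrak{L}_{U,\lambda}(h,f,\fp)$. Since $C$ is abelian this yields $\mathfrak{L}_{U,\lambda}(\hp h,f,\fpp)=\mathfrak{L}_{U,\lambda}(\hp,\fp,\fpp)\,\mathfrak{L}_{U,\lambda}(h,f,\fp)$, proving (1).

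For (2) the underlying $H$-poset does not depend on $[G\backslash U]$, so I only need a natural isomorphism between the functors $\mathfrak{L}_{U,\lambda}$ and $\mathfrak{L}'_{U,\lambda}$ attached to two representative sets $R=[G\backslash U]$ and $R'=[G\backslash U]'$; the desired isomorphism of $C$-monomial $H$-posets is then $(\id,\eta)$. For each $G$-orbit $\omega$ let $r_\omega\in R$, $r'_\omega\in R'$ be its representatives and write $r'_\omega=k_\omega r_\omega$ with $k_\omega\in G$. I would set
$$\eta_f=\prod_{\omega\in G\backslash U}\mathfrak{l}\big(k_\omega,f(r_\omega),f(r'_\omega)\big)\,\lambda^{-1}(k_\omega,1,r_\omega,r'_\omega),$$
which is well defined since $k_\omega f(r_\omega)=f(r'_\omega)$ and $k_\omega r_\omega=r'_\omega$, and which is invertible in $C$. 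To verify the naturality identity $\mathfrak{L}'_{U,\lambda}(h,f,\fp)\,\eta_f=\eta_{\fp}\,\mathfrak{L}_{U,\lambda}(h,f,\fp)$ for $hf\le\fp$, I would use the transition relation $g'_{h,r'_\omega}=k_\omega\,g_{h,r_\omega}\,k_{\omega''}^{-1}$, where $\omega''$ is the orbit of $r_\omega h$ and $\sigma_h(r_\omega)=r_{\omega''}$, and then split the orbit-$\omega$ factor of $\mathfrak{L}'_{U,\lambda}(h,f,\fp)$ by functoriality of $\mathfrak{l}$ and of $\lambda$ exactly as in (1). This exhibits that factor as the product of the orbit-$\omega$ factor of $\eta_{\fp}$, the orbit-$\omega$ term of $\mathfrak{L}_{U,\lambda}(h,f,\fp)$, and the inverse of the orbit-$\omega''$ factor of $\eta_f$; taking the product over $\omega$ and using that $\omega\mapsto\omega''$ is a bijection of $G\backslash U$ yields the identity, so $(\id,\eta)$ is the sought isomorphism.

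The many functoriality splittings are mechanical once the correct intermediate points are identified; the genuine obstacle, in both parts, is the bookkeeping of the auxiliary elements $g_{h,u}$ and the transition elements $k_\omega$ together with the two orbit-permutations — by $\hp$ in part (1) and by $h$ in part (2) — that make the re-indexings valid. The commutativity of $C$ is what permits these rearrangements throughout.
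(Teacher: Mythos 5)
Your proof is correct and takes essentially the same route as the paper's: the same recursion $\sigma_{h'h}(u)=\sigma_h(\sigma_{h'}(u))$ and $g_{h'h,u}=g_{h',u}\,g_{h,\sigma_{h'}(u)}$ up to a stabilizer element (which you absorb via the previously established independence of the choice of $g_{h,u}$, where the paper cancels the element $w$ explicitly using the defining property of $f$), followed by the same functoriality splitting and orbit re-indexing; in part (2) your $\eta_f$ is precisely the paper's correction factor $\alpha_f$, and your naturality identity $\mathfrak{L}'_{U,\lambda}(h,f,f')\,\eta_f=\eta_{f'}\,\mathfrak{L}_{U,\lambda}(h,f,f')$ is the paper's equation $\mathfrak{L}'_{U,\lambda}(h,f,f')=\mathfrak{L}_{U,\lambda}(h,f,f')\,\alpha_{f'}\,\alpha_f^{-1}$, obtained from the same transition relation $g'_{h,a_uu}=a_u\,g_{h,u}\,a_{\sigma_h(u)}^{-1}$. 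Your only additions are presentational --- you check explicitly that $hf$ lies in $t_{U,\lambda}(X,\mathfrak{l})$ and spell out the re-indexing bijections the paper leaves implicit.
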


\begin{proof}
 \begin{enumerate}
 	\item We show that $\mathfrak{L}_{U, \lambda} : \widehat{t_{U, \lambda}(X, \mathfrak{l})} \rightarrow \bullet_{C}$
 	is a functor. Let $h$, $\hp \in H$ and {\rouge $f,\,\fp,\,\fpp \in t_{U, \lambda}(X,\mathfrak{l})$}
 	such that $hf \leq \fp$ and $\hp \fp\leq \fpp$. Let $u \in [G\backslash U]$. Then there exist   some 
 	$\ghu,\, \ghpu,\,\ghphu$ in $G$ and unique elements $\sshu,\, \sshpu,\, \sshphu$ in $[G\backslash U]$ such that
 	 $$uh= \ghu \sshu,\, u\hp = \ghpu \sshpu, \, u\hp h= \ghphu \sshphu.$$
 	  Also there exist some 
 	  $\ghsshpu \in G$ and a unique $\sshsshpu \in [G\backslash U]$ such that 
 	  $$\sshpu h = \ghsshpu \sshsshpu.$$ 
 	  Now we get 
 	  $$u\hp h = \ghpu \ghsshpu \sshsshpu$$ 
 	  and 
 	  $$\sshphu= \sshsshpu.$$ 
 	  Then there exists $w \in G_{\sshphu}$ such that 
 	  $${\rouge \ghphu = \ghpu\ghsshpu  w.}$$ 
 	  
 	  We have the following commutative diagram:
{\dgreen
$$\xymatrix@R=6ex@C=3ex{ 	  
\sigma_{h'h}(u)\ar[rr]^-w\ar[rd]_-{g_{h'h,u}}&&\sigma_{h'h}(u)\ar[ld]^{g_{h',u}g_{h,\sigma_{h'}(u)}}\\
&uh'h&
}
$$
}
% 	  \begin{picture}(30,90)
% 	  
% 	  \put(155,65){$\sshphu$}
% 	  \put(245,65){$\sshphu$}
% 	  \put(203,16){\rouge$u\hp h$ }
% 	  \put(320,20){$.$ }
%
%
% 	  \put(195,68){\vector(1,0){44}}
% 	  \put(250,58){\vector(-1,-1){30}}
% 	  \put(180,58){\vector(1,-1){30}}
% 	  
% 	  
% 	  \put(250,42){\rouge $\ghpu\ghsshpu $}
% 	  \put(215,72){$w$}
% 	  \put(160,42){$\ghphu$}
% 	  \end{picture}
 	On the other hand since  $w \in G_{\sshphu}$, we get
 	$$\mathfrak{l}\Big(w, f\big(\sshphu\big), f\big(\sshphu\big)\Big)=\lambda\big(w,1,\sshphu,\sshphu\big).$$
 	Thus, setting $L=\mathfrak{L}_{U, \lambda}(\hp h,f,\fpp)$, we have 
 	\begin{align*}
   L&= \prod\limits_{u \in [G \backslash U]}\mathfrak{l}{\rouge\Big(\ghphu,f\big(\sshphu\big) , \fpp \big(u\big)\Big)\lambda^{-1}\big(\ghphu,\hp h, \sshphu, u\big)}\\
   &=  \prod\limits_{u \in [G \backslash U]}{\rouge \mathfrak{l}\Big(\ghpu\ghsshpu  w,f\big(\sshphu\big) , \fpp \big(u\big)\Big)\lambda^{-1}\big(\ghpu\ghsshpu  w, \hp h, \sshphu, u\big)}\\
     &= \prod\limits_{u \in [G \backslash U]}\mathfrak{l}\Big(\ghpu \ghsshpu ,f \big(\sshphu\big), \fpp\big(u\big)\Big) \lambda^{-1}\big(\ghpu\ghsshpu, \hp h, \sshphu,u\big)\\
     &= \mathfrak{L}(\hp,\fp,\fpp)\mathfrak{L}(h,f,\fp).\\
    \end{align*}

      Moreover, given $f \in T_{U, \lambda}(X, \mathfrak{l})$ we have 
      $$\mathfrak{L} (1,f,f)= \prod\limits_{u \in [G \backslash U]}\mathfrak{l}\big(1, f(u), f(u)\big)\lambda^{-1}(1,1,u,u)= 1.$$
      
       Thus, $\mathfrak{L}_{U, \lambda} : \widehat{t_{U, \lambda}(X, \mathfrak{l})} \rightarrow \bullet_{C}$ is a functor.
 	
 	\item Let $h \in H$ and $f$, $\fp \in t_{U, \lambda}(X, \mathfrak{l})$ such that $hf \leq \fp$.
 	 Let $S=[G\backslash U]$ and let $S^\prime$ be the another choice of representatives. If $u^\prime \in S^\prime$ then 
 	there exist some $a_u \in G$, and a unique $u \in S$ such that $u^\prime= a_uu$. Then there exist some $g_{h,a_uu}$, 
 	$\ghu \in G$, a unique ${\rouge\sigma_{h}^{\prime}(a_uu)} \in S^\prime$, and a unique $\sshu \in S$ such that 
 	$$a_uuh= g_{h, a_uu}\sigma_{h}^{\prime}(a_uu)$$
 	 and $$uh= \ghu\sshu.$$
 	 Then
 	$$a_uuh= a_u\ghu\sshu= a_u\ghu a_{\sshu}^{-1}a_{\sshu}\sshu.$$
 	So $\sigma_{h}^{\prime}(a_uu)= a_{\sshu}\sshu$. Note that $a_{\sshu}\sshu \in S^\prime.$ 
 	We get the following commutative diagram:
{\dgreen
$$\xymatrix@R=8ex@C=10ex{
a_{\sshu}f\big(\sshu\big)\ar[r]^-{a_u\ghu a_{\sshu}^{-1}}\ar[d]_-{a_{\sshu}^{-1}}&a_u\fp\big(u\big)\\
f\big(\sshu\big)\ar[r]_-{g_{h,u}}&f(u).\ar[u]_-{a_u}
}
$$
}
% 	\begin{picture}(400,140)
% 	\put(115,95){$a_{\sshu}f\big(\sshu\big)$}
% 	\put(135,35){$f\big(\sshu\big)$}
% 	\put(260,95){$a_u\fp\big(u\big)$}
% 	\put(265,35){$f\big(u\big)$ }
% 	\put(310,35){$.$ }
% 	
% 	\put(160,89){\vector(0,-1){38}}
% 	\put(275,50){\vector(0,1){38}}
% 	\put(190,98){\vector(1,0){64}}
% 	\put(188,38){\vector(1,0){66}}
% 	
% 	
% 	\put(130,72){$ a_{\sshu}^{-1}$}
% 	\put(283,72){$a_u$}
% 	\put(194,108){$a_u\ghu a_{\sshu}^{-1}$}
% 	\put(210,28){$\ghu$}
% 	\end{picture}
 	
 	Thus, setting $L=\mathfrak{L}_{U, \lambda}^\prime (h,f,\fp)$, we have
 	
 	\begin{align*}
    L\!\! &=\! \! \prod\limits_{a_uu \in S^\prime}\!\! \mathfrak{l}\Big(a_u\ghu a_{\sshu}^{-1}, f\big(a_{\sshu}\sshu\big), \fp\big(a_uu\big)\Big)\lambda^{-1}\big(a_u\ghu a_{\sshu}^{-1}, h,a_{\sshu}\sshu, a_uu\big)\\
   &= \mathfrak{L}_{U, \lambda}^\prime(h,f,\fp)= \mathfrak{L}_{U, \lambda}(h,f,\fp)\alpha_{\fp}\alpha_{f}^{-1}\\
 		\end{align*}

 	 where $$\alpha_{\fp}= \prod\limits_{u \in S}\mathfrak{l}\big(a_u,\fp(u),a_u\fp(u)\big)\lambda^{-1}\big(a_u,1,u, a_uu\big)$$
 	  and 
 	  $$\alpha_{f}^{-1}= \prod\limits_{u \in S}\mathfrak{l}\big(a_{u}^{-1},a_{u}f(u),f(u)\big)\lambda^{-1}\big(a_u^{-1}, 1,a_uu, u\big).$$
 \end{enumerate}
\end{proof}

\begin{prop} Let $G$ and $H$ be finite groups and $(U,\lambda)$ be a $C$-monomial $(G, H)$-biset.
	\begin{enumerate}
		\item Let   $(X, \mathfrak{l}),\,(X^\prime, \mathfrak{l}^\prime)$ 
		be $C$-monomial $G$-posets then
		$$T_{U, \lambda}\big((X, \mathfrak{l})\times (X^\prime, \mathfrak{l}^\prime)\big) \cong 
		T_{U, \lambda}(X, \mathfrak{l})\times T_{U, \lambda}(X^\prime, \mathfrak{l}^\prime).$$
		\item $T_{U, \lambda} : {_CMG}${\sf-poset} $\rightarrow {_CMH}${\sf-poset} is a functor.
	\end{enumerate}

\end{prop}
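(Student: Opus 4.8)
The plan is to treat the two assertions separately, establishing the functoriality in (2) first — it is the cleaner half — and then reading off the shape of the isomorphism in (1) from the same bookkeeping.

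For (2), I would define $T_{U,\lambda}$ on a morphism $(\phi,\beta):(X,\mathfrak{l})\to(X',\mathfrak{l}')$ by post-composition, setting $T_{U,\lambda}(\phi,\beta):f\mapsto \phi\circ f$ on underlying posets. First one checks that $\phi\circ f$ again lies in $t_{U,\lambda}(X',\mathfrak{l}')$: naturality of $\beta$ at a loop $g\in G_x$ gives $\mathfrak{l}'(g,\phi(x),\phi(x))\beta_x=\beta_x\mathfrak{l}(g,x,x)$, whence $\mathfrak{l}'(g,\phi(x),\phi(x))=\mathfrak{l}(g,x,x)$ since $C$ is abelian, so the defining identity $\mathfrak{l}(g,f(u),f(u))=\lambda(g,1,u,u)$ is inherited by $\phi\circ f$. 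That $\phi\circ f$ is $H$-equivariant and order preserving is immediate from $(hf)(u)=f(uh)$ and the pointwise order on $t_{U,\lambda}$. For the monomial part I would take the natural transformation with component $\prod_{u\in[G\backslash U]}\beta_{f(u)}$ at $f$. Inserting the naturality square of $\beta$ into the definition of $\mathfrak{L}_{U,\lambda}(h,f,\fp)$ reduces the naturality of this transformation to the identity $\prod_{u}\beta_{f(\sigma_h(u))}=\prod_{u}\beta_{f(u)}$, which holds because $u\mapsto\sigma_h(u)$ is a permutation of $[G\backslash U]$ (right multiplication by $h$ permutes the $G$-orbits of $U$). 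Functoriality, i.e. $T_{U,\lambda}(\id)=\id$ and compatibility with composition, is then formal, the multiplicativity of the $\beta$-components matching composition of natural transformations; independence of the choice of $[G\backslash U]$ is granted by the preceding lemma.

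For (1), the candidate isomorphism comes from the universal property of the product: a $G$-map $f:U\to X\times X'$ is the same datum as the pair $(f_1,f_2)$ of its coordinates, and this identification is manifestly compatible with $(hf)(u)=f(uh)$ and with the product order. I would first record the factorization $(\mathfrak{l}\times\mathfrak{l}')(g,f(u),f(u))=\mathfrak{l}(g,f_1(u),f_1(u))\,\mathfrak{l}'(g,f_2(u),f_2(u))$, match the membership condition cutting out $t_{U,\lambda}(X\times X',\mathfrak{l}\times\mathfrak{l}')$ against the two conditions defining the factors, and transport the pairing to an order isomorphism of the underlying $H$-posets.

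The real work — and the step I expect to be the main obstacle — is the monomial (label) part of (1): one must compare $\mathfrak{L}_{U,\lambda}$ formed from $\mathfrak{l}\times\mathfrak{l}'$ with the product of the two functors $\mathfrak{L}_{U,\lambda}$ formed from $\mathfrak{l}$ and from $\mathfrak{l}'$. Since the auxiliary data $g_{h,u}$ and $\sigma_h(u)$ depend only on $(U,h,u)$ and not on the target poset, the factors $\mathfrak{l}(g_{h,u},\dots)$ split over the two coordinates exactly as above; the delicate point is that the correcting term $\lambda^{-1}(g_{h,u},h,\sigma_h(u),u)$ occurs once on the $X\times X'$ side but once in each factor on the product side. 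I would absorb the resulting factor $\prod_{u}\lambda^{-1}(g_{h,u},h,\sigma_h(u),u)$ — an $f$-independent character of $H$ — into the natural-transformation part of the isomorphism, precisely as the $\beta$-components were used in (2), and verify that the consistency this requires is forced by the defining condition on $t_{U,\lambda}$ together with the permutation property of $\sigma_h$. Once the isomorphism of (1) is exhibited, the post-composition construction of (2) shows it is natural in $(X,\mathfrak{l})$ and $(X',\mathfrak{l}')$, completing the argument.
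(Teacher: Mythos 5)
Your proof of assertion 2 is correct and is essentially the paper's own argument: post-composition $f\mapsto\varphi\circ f$, the transformation $\prod_{u\in[G\backslash U]}\beta_{f(u)}$, membership of $\varphi\circ f$ obtained by cancelling $\beta_{f(u)}$ in the naturality square at a loop $g\in G_u$, and naturality of the product transformation by inserting the naturality squares for the morphisms $g_{h,u}$ into the defining product of $\mathfrak{L}_{U,\lambda}(h,f,f')$; the reindexing $\prod_u\beta_{f(\sigma_h(u))}=\prod_u\beta_{f(u)}$, which you justify by the permutation property of $u\mapsto\sigma_h(u)$ on $[G\backslash U]$, is used tacitly in the paper's final equality, so making it explicit is a refinement, not a deviation.

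Assertion 1 --- which the paper disposes of with the single word ``clear'' --- is where your proposal has a genuine gap, and it occurs one step \emph{before} the point you flagged. The membership conditions do not match under the coordinate identification $f\leftrightarrow(f_1,f_2)$: membership in $t_{U,\lambda}(X\times X',\mathfrak{l}\times\mathfrak{l}')$ demands $\mathfrak{l}(g,f_1(u),f_1(u))\,\mathfrak{l}'(g,f_2(u),f_2(u))=\lambda(g,1,u,u)$ for all $g\in G_u$, whereas membership of the pair in $t_{U,\lambda}(X,\mathfrak{l})\times t_{U,\lambda}(X',\mathfrak{l}')$ demands each factor \emph{separately} equal $\lambda(g,1,u,u)$; multiplying the latter two conditions produces $\lambda(g,1,u,u)^2$, not $\lambda(g,1,u,u)$. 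This is fatal rather than cosmetic: take $H=1$, $U=\{*\}$ with trivial action, $\lambda(g,1,*,*)=\mu(g)$ for a nontrivial character $\mu:G\to C$, and let $X=X'$ be a single $G$-fixed point with $\mathfrak{l}(g,x,x)=\mathfrak{l}'(g,x,x)=\mu(g)$. Then each factor $t_{U,\lambda}$ is a point, but $t_{U,\lambda}(X\times X',\mathfrak{l}\times\mathfrak{l}')$ is empty because the product label is $\mu^2\neq\mu$; so the two sides of assertion 1 have non-isomorphic underlying $H$-posets and no natural transformation can repair the identification.

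Your proposed absorption of the doubled correcting term fails independently. A natural transformation has components $\mu_f\in C$ satisfying $\mu_{f'}\mu_f^{-1}=c(h):=\prod_u\lambda^{-1}(g_{h,u},h,\sigma_h(u),u)$ whenever $hf\leq f'$; taking $f'=f$ (loops do occur, e.g.\ for constant maps in the example below) forces $c(h)=1$, which is false in general. Moreover $c(h)$ alone is not well defined: replacing $g_{h,u}$ by $g_{h,u}w$ with $w\in G_{\sigma_h(u)}$ multiplies it by $\lambda^{-1}(w,1,\sigma_h(u),\sigma_h(u))$ --- inside $\mathfrak{L}_{U,\lambda}$ this ambiguity is cancelled by the accompanying $\mathfrak{l}$-factor via the defining condition of $t_{U,\lambda}$, but in the bare product $\prod_u\lambda^{-1}$ nothing cancels it, so it is not ``an $f$-independent character of $H$'' as you assert. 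Even when all the $G_u$ are trivial, so that the underlying posets do agree, the doubled $\lambda^{-1}$ remains a real obstruction: for $G=1$, $H=\langle t\rangle$ of order $2$, $C=\{\pm1\}$, $U=\{*\}$ and $\lambda(1,t,*,*)=-1$, one computes $T_{U,\lambda}\big((\bullet,1)\times(\bullet,1)\big)=(\bullet,\nu)$ with $\nu(t)=-1$, while $T_{U,\lambda}(\bullet,1)\times T_{U,\lambda}(\bullet,1)=(\bullet,\nu^2)=(\bullet,1)$, and these are not isomorphic. So the step ``match the membership conditions'' is false as written, the absorption trick cannot succeed, and the identification you (and, implicitly, the paper's one-word proof) have in mind goes through only under additional hypotheses on $\lambda$ --- triviality on the loop groups $(G\times H)_u$, as holds for instance when $\lambda=1_U$ --- which neither your argument nor the paper supplies.
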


\begin{proof}
	\begin{enumerate}
		\item is clear.
		\item Let $(\varphi, \beta): (X, \mathfrak{l}) \rightarrow (Y, \mathfrak{m})$
		be a map of $C$-monomial $G$-posets. We define 
		a map of $C$-monomial $G$-posets
		$$\big(T_{U, \lambda}(\varphi), T_{U, \lambda}(\beta)\big): \big(t_{U, \lambda}(X, \mathfrak{l}), \mathfrak{L}\big) \rightarrow \big(t_{U, \lambda}(Y, \mathfrak{m}), \mathfrak{M}\big)$$ 
		where 
		$$T_{U, \lambda}(\varphi) : t_{U, \lambda}(X, \mathfrak{l}) \rightarrow t_{U, \lambda}(Y, \mathfrak{m})$$
		such that $T_{U, \lambda}(\varphi)(f)= \varphi \circ f$ 
		and 
		$$T_{U, \lambda}(\beta) : \mathfrak{L}(f) \rightarrow \mathfrak{M} \circ T_{U, \lambda}(\varphi)(f)$$
		
		such that 
		$$T_{U, \lambda}(\beta)= \prod\limits_{u \in [G\backslash U]}\beta_{f(u)}$$
		for any $f \in  t_{U, \lambda}(X, \mathfrak{l})$. Clearly, 
		$\varphi \circ f : U \rightarrow X \rightarrow Y$ is 
		a map of $G$-posets. Since 
		given $g \in G_u$ and $u \in U$
		the map $\beta: \mathfrak{l} \rightarrow \mathfrak{m} \circ \varphi$ is natural,   we have the following commutative diagram:
		
{\dgreen
$$\xymatrix@R=8ex@C=8ex{
\mathfrak{l}\big(f(u)\big)\ar[r]^-{\beta_{f(u)}}\ar[d]_-{\mathfrak{l}\left(\rule{0ex}{1.25ex}g, f(u), f(u)\right)}&\mathfrak{m}\circ\varphi\big(f(u)\big)\ar[d]^-{\mathfrak{m}\left(\rule{0ex}{1.25ex}g, \varphi\circ f(u), \varphi\circ f(u)\right)}\\
\mathfrak{l}\big(f(u)\big)\ar[r]_-{\beta_{f(u)}}&\mathfrak{m}\circ\varphi\big(f(u)\big).
}
$$
}
%		 \begin{picture}(400,120)
%		
%		\put(150,95){$\mathfrak{l}\big(f(u)\big)$}
%		\put(150,35){$\mathfrak{l}\big(f(u)\big)$}
%		\put(260,95){$\mathfrak{m}\circ\varphi\big(f(u)\big)$}
%		\put(260,35){$\mathfrak{m}\circ\varphi\big(f(u)\big)$ }
%		\put(420,35){$.$ }
%		
%		\put(160,89){\vector(0,-1){38}}
%		\put(275,89){\vector(0,-1){38}}
%		\put(190,98){\vector(1,0){64}}
%		\put(188,38){\vector(1,0){66}}
%		
%		
%		\put(210,104){$\beta_{f(u)}$}
%		\put(80,65){$\mathfrak{l}\big(g, f(u), f(u)\big)$}
%		\put(283,65){$\mathfrak{m}\big(g, \varphi\circ f(u), \varphi\circ f(u)\big)$}
%	   \put(210,26){$\beta_{f(u)}$}
%		\end{picture}
		
	\end{enumerate}

So 
$$\beta_{f(u)}\mathfrak{l}\Big(g, f\big(u\big), f\big(u\big)\Big)= \mathfrak{m}\Big(g, \varphi\big(f(u)\big),\varphi\big(f(u)\big)\Big) \beta_{f(u)}.$$

Since $g \in G_{f(u)}$, we have
$$\mathfrak{l}\Big(g, f\big(u\big), f\big(u\big)\Big)= \lambda(g,1,u,u).$$
 Then we get 
$$\mathfrak{m}\Big(g, \varphi\big(f(u)\big), \varphi\big(f(u)\big)\Big)= \lambda(g,1,u,u).$$
Thus, {\rouge $\varphi \circ f \in t_{U, \lambda}(Y, \mathfrak{m})$.}

Now we show that 
$$T_{U, \lambda}(\beta): \mathfrak{L} \rightarrow \mathfrak{M}\circ T_{U, \lambda}(\varphi)$$

is a natural transformation. Let $f,\, \fp \in t_{U, \lambda}(X,\mathfrak{l})$ and $h \in H$
such that $hf \leq \fp$. We show that the following diagram is commutative:
{\dgreen
$$\xymatrix@R=6ex@C=10ex{
\mathfrak{L}\big(f\big)\ar[r]^-{T_{U, \lambda}(\beta)_{f}}\ar[d]_-{\mathfrak{L}(h, f, \fp)}&\mathfrak{M} \circ T_{U, \lambda}(\varphi)(f)\ar[d]^-{\mathfrak{M}(h, \varphi\circ f, \varphi\circ\fp)}\\
\mathfrak{L}\big(\fp\big)\ar[r]_-{T_{U, \lambda}(\beta)_{\fp}}&\mathfrak{M} \circ T_{U, \lambda}(\varphi)(\fp).
}
$$
}
% \begin{picture}(300,130)
%
%
%\put(150,95){$\mathfrak{L}\big(f\big)$}
%\put(150,35){$\mathfrak{L}\big(\fp\big)$}
%\put(260,95){$\mathfrak{M} \circ T_{U, \lambda}\big(\varphi\big)(f)$}
%\put(260,35){$\mathfrak{M} \circ T_{U, \lambda}\big(\varphi\big)(\fp)$ }
%\put(390,35){$.$ }
%
%\put(160,89){\vector(0,-1){38}}
%\put(275,89){\vector(0,-1){38}}
%\put(180,98){\vector(1,0){73}}
%\put(180,38){\vector(1,0){73}}
%
%
%\put(204,106){$T_{U, \lambda}\big(\beta\big)_{f}$}
%\put(100,65){$\mathfrak{L}(h, f, \fp)$}
%\put(283,65){$\mathfrak{M}(h, \varphi\circ f, \varphi\circ\fp)$}
%\put(204,22){$T_{U, \lambda}\big(\beta\big)_{\fp}$}
%\end{picture}

Let {\rouge $u \in [G\backslash U]$. Then} there exist some $\ghu \in G$ and 
a unique $\sshu \in [G\backslash U]$
such that 
$$uh= \ghu\sshu.$$ 
Since $\beta: \mathfrak{l} \rightarrow \mathfrak{m} \circ \varphi$ is 
a natural transformation, we obtain the following commutative diagram :
{\dgreen
$$\xymatrix@R=8ex@C=10ex{
\mathfrak{l}\Big(f\big(\sshu\big)\Big)\ar[r]^-{\beta_{f(\sshu)}}\ar[d]_-{\rouge \mathfrak{l}\big(\ghu,\, f(\sshu),\, \fp(u)\big)}&\mathfrak{m} \circ \varphi\Big(f\big(\sshu\big)\Big)\ar[d]^-{\rouge \mathfrak{m}\Big(\ghu,\, \varphi\big(f\big(\sshu\big)\big),\, \varphi\big(\fp\big(u\big)\big)\Big)}\\
\mathfrak{l}\big(\fp(u)\big)\ar[r]_-{\beta_{\fp(u)}}&{\rouge \mathfrak{m} \circ \varphi\big(\fp(u)\big)}.
}
$$
}
%\begin{picture}(400,120)
%\put(125,95){$\mathfrak{l}\Big(f\big(\sshu\big)\Big)$}
%\put(140,35){$\mathfrak{l}\big(\fp(u)\big)$}
%\put(260,95){$\mathfrak{m} \circ \varphi\Big(f\big(\sshu\big)\Big)$}
%\put(260,35){\rouge $\mathfrak{m} \circ \varphi\big(\fp(u)\big)$} 
%\put(390,35){$.$ }
%
%\put(160,89){\vector(0,-1){38}}
%\put(275,89){\vector(0,-1){38}}
%\put(190,98){\vector(1,0){64}}
%\put(184,38){\vector(1,0){70}}
%
%
%\put(210,104){$\beta_{f(\sshu)}$}
%\put(45,65){\rouge$\mathfrak{l}\big(\ghu, f(\sshu), \fp(u)\big)$}
%\put(283,65){\rouge$\mathfrak{m}\Big(\ghu, \varphi\big(f\big(\sshu\big)\big), \varphi\big(\fp\big(u\big)\big)\Big)$}
%\put(210,26){$\beta_{\fp(u)}$}
%\end{picture}

Using the commutativity of the above diagram, and setting $T= T_{U, \lambda}(\beta)_{\fp}\circ\mathfrak{L}(h,f,\fp)$, we get

\begin{align*}
T&= T_{U, \lambda}(\beta)_{\fp}{\rouge\bigg(\prod\limits_{u \in [G \backslash U]} \mathfrak{l}\Big(\ghu, f\big(\sshu \big),\fp \big(u\big)\Big)\lambda^{-1}\big(\ghu, h, \sshu, u\big)\bigg)} \\
&= \prod\limits_{u \in [G \backslash U]} \beta_{\fp(u)} \mathfrak{l}\Big(\ghu, f\big(\sshu \big),\fp (u)\Big)\lambda^{-1}\big(\ghu, h, \sshu, u\big)\\
&= \prod\limits_{u \in [G \backslash U]} \mathfrak{m}\bigg(\ghu, \varphi\Big(f\big(\sshu\big)\Big), \varphi\Big(\fp\big(u\big)\Big)\bigg)\lambda^{-1}\big(\ghu, h, \sshu, u\big)\beta_{f\big(\sshu\big)}\\
&= \mathfrak{M}(h, \varphi \circ f, \varphi \circ \fp)\beta_f.\\
\end{align*}
So $T_{U, \lambda}(\beta): \mathfrak{L} \rightarrow \mathfrak{M}\circ T_{U, \lambda}(\varphi)$ is a natural transformation. Thus, 
$$\big(T_{U, \lambda}(\varphi), T_{U, \lambda}(\beta)\big): \big(t_{U, \lambda}(X, \mathfrak{l}), \mathfrak{L}\big) \rightarrow \big(t_{U, \lambda}(Y, \mathfrak{m}), \mathfrak{M}\big)$$

is a map of $C$-monomial $G$-posets.
\end{proof}
\begin{lemma}\label{bij} Let $G,\,H$ and $K$ be finite groups. If $U$ is a $(G, H)$-biset and
	$V$ is a left free $(H, K)$-biset{\dgreen, then the map $(u,v)\in U\times V\mapsto (u,_{_H}v)\in U\times_HV$ restricts to a bijection $\pi:[G\dom U]\times[H\dom V]\to [G\dom(U\times_HV)]$, where brackets denote sets of representatives of orbits.}
%there exists a bijection 
%$\varphi: [G \backslash (_GU\times_H V_K)] \rightarrow [G\backslash U]\times [H\backslash V]$.
\end{lemma}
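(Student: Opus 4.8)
The plan is to verify that the assignment $(u,v)\mapsto(u,_{_H}v)$ induces a well-defined map $\pi$ on the level of orbit representatives and that $\pi$ is both surjective and injective onto $[G\dom(U\times_HV)]$. Throughout I use the description of $U\times_HV$ as the set of $H$-orbits of $U\times V$ for the action $h(u,v)=(uh^{-1},hv)$, together with the residual left $G$-action $g(u,_{_H}v)=(gu,_{_H}v)$.

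For surjectivity, I would start from an arbitrary $(u_0,_{_H}v_0)\in U\times_HV$. Choosing $h_0\in H$ and $v\in[H\dom V]$ with $v_0=h_0v$, and applying $h_0^{-1}$ to $(u_0,v_0)$, I get $(u_0,_{_H}v_0)=(u_0h_0,_{_H}v)$; note that this normalization of the $V$-component does not require freeness. Writing next $u_0h_0=gu$ with $g\in G$ and $u\in[G\dom U]$, I obtain $g^{-1}(u_0,_{_H}v_0)=(u,_{_H}v)$, so the $G$-orbit of any element of $U\times_HV$ meets the image of $[G\dom U]\times[H\dom V]$.

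For injectivity, suppose $(u,_{_H}v)$ and $(u',_{_H}v')$ lie in the same $G$-orbit, with $u,u'\in[G\dom U]$ and $v,v'\in[H\dom V]$. Then $(gu,_{_H}v)=(u',_{_H}v')$ for some $g\in G$, which means $(gu,v)$ and $(u',v')$ are $H$-conjugate, so there is $h\in H$ with $u'=guh^{-1}$ and $v'=hv$. Here the hypothesis that $V$ is left free is essential: since $v,v'\in[H\dom V]$ and $v'=hv$ lie in the same $H$-orbit we must have $v=v'$, and then $hv=v$ forces $h=1$ by freeness. Consequently $u'=gu$ puts $u$ and $u'$ in the same $G$-orbit, so $u=u'$, and the two pairs coincide.

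The main obstacle is exactly this use of freeness in the injectivity step. Without it, $hv=v$ would only place $h$ in the (possibly nontrivial) stabilizer of $v$, allowing distinct pairs to collapse into one $G$-orbit, so that $|[G\dom U]|\cdot|[H\dom V]|$ would generally overcount $|[G\dom(U\times_HV)]|$. Conceptually, left freeness of $V$ is precisely what makes $(u,v)\mapsto(u,_{_H}v)$ a $G$-equivariant bijection $U\times[H\dom V]\to U\times_HV$ (with $G$ acting only on the first factor), after which the lemma reduces to the description of $G$-orbits on a product where $G$ acts trivially on the second factor; I would record this as the reason behind the computation but carry out the explicit two-step verification above.
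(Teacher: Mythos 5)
Your proposal is correct and follows essentially the same argument as the paper: surjectivity by first normalizing the $V$-component with an element of $H$ (no freeness needed) and then the $U$-component with an element of $G$, and injectivity by extracting $h\in H$ with $v'=hv$, using that $v,v'$ are representatives to get $v=v'$, and then left freeness to force $h=1$ and hence $u=u'$. The closing conceptual remark about the $G$-equivariant bijection $U\times[H\dom V]\to U\times_HV$ is a nice gloss but adds nothing beyond the paper's proof.
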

\begin{proof}
{\dgreen For $(u,v)\in U\times V$, there exists $v_0\in [G\dom V]$ and $h\in H$ such that $v=hv_0$. Then there exists $u_0\in[G\dom U]$ and $g\in G$ such that $uh=gu_0$. Then $(u,_{_H}v)=g(u_0,_{_H}v_0)$. Hence $\pi$ is surjective. Now if $(u_0,v_0)$ and $(u_1,v_1)$ are pairs in $[G\dom U]\times [H\dom V]$ which lie in the same $G$-orbit, there exists $g\in G$ and $h\in H$ such that $(gu_0,v_0)=(u_1h^{-1},hv_1)$. Hence $hv_1=v_0$, so $v_0=v_1=hv_1$, and $h=1$ since~$H$ act freely on $V$. Then $gu_0=u_1$, so $u_0=u_1$, and $\pi$ is injective. 
}
\end{proof}
\begin{prop}\label{PM}Let $G,\,H$ and $K$ be finite groups.
	\begin{enumerate}
		\item Let $(\bullet, 1)$ be the $C$-monomial $G$-poset where $\bullet$
		is $G$-poset with one element and $1: \bullet \rightarrow \bullet_C$ is the functor
		such that $1(g,\bullet, \bullet)=1$. Then $T_{U, \lambda}(\bullet, 1)= (\bullet, 1)$. 
\item Let $(\emptyset,z)$ be the empty $C$-monomial $(G,H)$-poset. Then  $T_{\emptyset,z}$ is the constant functor with value $(\bullet,1)$. 
		\item Let $(U, \lambda)$ and $(U^\prime, \lambda^\prime)$ be $C$-monomial $(G, H)$-bisets 
		and let  $(X, \mathfrak{l})$ be
		a $C$-monomial $G$-poset then  
		$$T_{U\sqcup U^\prime, \lambda\sqcup\lambda^\prime }(X, \mathfrak{l})=T_{U, \lambda}(X, \mathfrak{l})T_{U^\prime, \lambda^\prime}(X, \mathfrak{l}).$$
		\item Let $\id_G$ stand for the identity $(G, G)$-biset. Then 
		${\dgreen T _{\id_G,1}(X,\mathfrak{l})}=(X,\mathfrak{l})$ for any $C$-monomial $G$-poset $(X,\mathfrak{l}).$
		\item Let $(V, \rho)$ be a $C$-monomial left free $(H, K)$-biset, and $(U,\lambda)$ be a $C$-monomial $(H,G)$-biset. Then
		$$T_{V,\rho}\circ T_{U, \lambda} = T_{U \times_H V, \lambda\times\rho}.$$
	\end{enumerate}

\end{prop}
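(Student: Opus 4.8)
The assertions (1)--(4) are direct computations from the definitions, and I would dispose of them first. For (1) there is a unique $G$-equivariant map $f_0\colon U\to\bullet$, so $t_{U,\lambda}(\bullet,1)$ is the one-point $H$-poset; it then remains to check, directly from the product formula for $\mathfrak{L}_{U,\lambda}$ together with the defining relation $\mathfrak{l}(g,f(u),f(u))=\lambda(g,1,u,u)$, that the attached character is the trivial one. Assertion (2) is immediate, since the empty biset admits the empty map as unique element. For (3) a $G$-map $U\sqcup U'\to X$ is the same as a pair of $G$-maps, giving a $C$-monomial $H$-poset isomorphism $t_{U\sqcup U',\lambda\sqcup\lambda'}(X,\mathfrak{l})\cong t_{U,\lambda}(X,\mathfrak{l})\times t_{U',\lambda'}(X,\mathfrak{l})$; here one uses $[G\dom(U\sqcup U')]=[G\dom U]\sqcup[G\dom U']$, so that the product defining $\mathfrak{L}_{U\sqcup U',\lambda\sqcup\lambda'}$ factors as the product of $\mathfrak{L}_{U,\lambda}$ and $\mathfrak{L}_{U',\lambda'}$. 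For (4), evaluation at $1\in G$ identifies $t_{\id_G,1}(X,\mathfrak{l})$ with $(X,\mathfrak{l})$; since $[G\dom\id_G]=\{1\}$ the product defining $\mathfrak{L}_{\id_G,1}$ has a single factor and collapses to $\mathfrak{l}$.

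The heart of the proposition is (5). The plan is to construct an isomorphism of $C$-monomial $K$-posets, natural in $(X,\mathfrak{l})$, between $T_{V,\rho}\big(T_{U,\lambda}(X,\mathfrak{l})\big)$ and $T_{U\times_HV,\lambda\times\rho}(X,\mathfrak{l})$, and to check that under the canonical identifications it is the identity, so the two functors coincide. First I would set up the currying correspondence: to $F\in t_{U\times_HV,\lambda\times\rho}(X)$ associate $\Theta(F)\in t_{V,\rho}\big(t_{U,\lambda}(X)\big)$ by $\Theta(F)(v)(u)=F(u,{}_Hv)$, with inverse $\phi\mapsto F_\phi$, $F_\phi(u,{}_Hv)=\phi(v)(u)$. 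The underlying bijection of $K$-posets is the classical fact recalled in \cite{BURN}, and it is here that the left-freeness of $V$, via $U_\lambda\circ_HV_\rho=U\times_HV$, is first used. What must be added is that $\Theta$ matches the monomial side-conditions: that each $\Theta(F)(v)$ lands in $t_{U,\lambda}(X)$ and that $\Theta(F)$ satisfies the condition defining $t_{V,\rho}$; both follow by unwinding the defining relation of the $t$-sets together with the formula for $\lambda\times\rho$.

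The decisive step is to compare the two monomial functors. Using Lemma~\ref{bij} I would fix the transversal $[G\dom(U\times_HV)]=\{(u,{}_Hv)\mid u\in[G\dom U],\,v\in[H\dom V]\}$. For $k\in K$ and $w=(u,{}_Hv)$ in it, writing $vk=h_{k,v}\sigma_k(v)$ in $V$ and $u\,h_{k,v}=g_{h_{k,v},u}\,\sigma_{h_{k,v}}(u)$ in $U$, the factorization $wk=g_{k,w}\,\sigma_k(w)$ unwinds to
\[
\sigma_k(w)=\big(\sigma_{h_{k,v}}(u),{}_H\sigma_k(v)\big),\qquad g_{k,w}=g_{h_{k,v},u},
\]
and the defining formula gives $(\lambda\times\rho)\big(g_{k,w},k,\sigma_k(w),w\big)=\lambda\big(g_{h_{k,v},u},h_{k,v},\sigma_{h_{k,v}}(u),u\big)\,\rho\big(h_{k,v},k,\sigma_k(v),v\big)$. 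Substituting into the product defining $\mathfrak{L}_{U\times_HV,\lambda\times\rho}(k,F,F')$ and using $F(a,{}_Hb)=\phi(b)(a)$, I would regroup the resulting double product over $(u,v)$: first over $u\in[G\dom U]$, which reconstitutes the factor $\mathfrak{L}_{U,\lambda}\big(h_{k,v},\Theta(F)(\sigma_k(v)),\Theta(F')(v)\big)$, then over $v\in[H\dom V]$ together with the remaining $\rho$-factors, so as to recover $\mathfrak{L}_{V,\rho}\big(k,\Theta(F),\Theta(F')\big)$. Finally I would verify independence of all transversal choices (left-freeness of $V$ makes the intermediate $h_{k,v}$ unambiguous) and naturality in $(X,\mathfrak{l})$, which upgrades the pointwise isomorphism to an equality of functors.

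The main obstacle is precisely this regrouping. One must show that the single product over $G$-orbits of $U\times_HV$ splits as the iterated product over $G$-orbits of $U$ and then $H$-orbits of $V$, and --- most delicately --- that the $\lambda$- and $\rho$-contributions reassemble with exactly the right multiplicities, since each $H$-orbit of $V$ sits under several $G$-orbits of $U\times_HV$. This bookkeeping is what the hypothesis that $V$ is left free controls, through the clean bijection of Lemma~\ref{bij} and the uniqueness of $h_{k,v}$; matching these multiplicities is the crux on which the composition formula, and hence the (partial) fibred biset functor structure announced in the introduction, ultimately rests.
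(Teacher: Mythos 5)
Your proposal is correct and follows essentially the same route as the paper: assertions 1--4 are treated as direct checks (the paper likewise declares them clear), and for assertion 5 you use exactly the paper's currying bijection $\varphi(f)(u,\sh v)=f(v)(u)$ with inverse $\theta(t)(v)(u)=t(u,\sh v)$, together with Lemma~\ref{bij}, to rewrite the product defining $\mathfrak{L}_{U\times_HV,\lambda\times\rho}$ as a double product over $[G\dom U]\times[H\dom V]$, with left-freeness of $V$ entering in the same two places (via $U_\lambda\circ_HV_\rho=U\times_HV$ and via the transversal bijection, which also settles the multiplicity bookkeeping you single out as the crux). The only cosmetic difference is that you package the conclusion as a natural isomorphism checked to be the identity, whereas the paper directly computes the equality $\mathfrak{L}_{V,\rho}\circ\mathfrak{L}_{U,\lambda}=\mathfrak{L}_{U\times_HV,\lambda\times\rho}$, handling the ambiguity in the choice of $g_{k,(u,\sh v)}$ by the same $w\in G_{\sigma_k(u,\sh v)}$ cancellation you invoke.
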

\begin{proof}

 	$1.$, $2.$, $3.$ and $4.$  are clear.
 
    $5.$ Note that since $V$ is left free, we have  $U_\lambda\circ_H V_\rho \cong {_GU\times_HV_K}$.
    Let $(X, \mathfrak{l})$ be a $C$-monomial $G$-poset.  We need to show that
    $$\Big(t_{V,\rho}\big(t_{U,\lambda}(X,\mathfrak{l}),\mathfrak{L}_{U,\lambda}\big), \mathfrak{L}_{V,\rho}\circ\mathfrak{L}_{U,\lambda}\Big)
    =\big(t_{U\times_HV,\lambda\times\rho}(X,\mathfrak{l}),
    \mathfrak{L}_{U\times_HV, \lambda\times\rho}\big).$$
    We define a $K$-poset map
    	$\varphi : t_{V,\rho}\big(t_{U,\lambda}(X,\mathfrak{l}),\mathfrak{L}_{U,\lambda}\big) \rightarrow t_{U \times_H V, \lambda\times\rho}(X, \mathfrak{l})$
    such that
    $$\varphi(f)(u,\sh v)= f(v)(u)$$
    for any $f \in t_{V,\rho}\big(t_{U,\lambda}(X,\mathfrak{l}),\mathfrak{L}_{U,\lambda}\big)$
    and $(u,\sh v)\in U \times_H V.$ It's clear that the map $\varphi(f)$ is a map of $G$-posets.

    Let $g \in G_{(u,_{_H} v)}$. Note that since $V$ is $H$-free,  we have $g \in G_u.$
 	Then 
 	$$\mathfrak{l}\Big(g, \varphi\big(f\big)(u,_{_H}v), \varphi\big(f\big)(u,_{_H}v)\Big)= \mathfrak{l}\Big(g, f(v)(u), f(v)(u)\Big)$$
   $$= \lambda(g,1,u,u)\rho(1,1,v,v)=(\lambda\times\rho)\big(g,1,(u,\sh v),(u,\sh v)\big).$$
 	and so 
 	$\varphi(f) \in t_{U \times_H V,\lambda\times\rho}(X, \mathfrak{l}).$
 	
 	Now we define a map 
 	$$\theta : t_{U \times_H V, \lambda\times\rho}(X, \mathfrak{l}) \rightarrow  t_{V,\rho}\big(t_{U,\lambda}(X,\mathfrak{l}),\mathfrak{L}_{U,\lambda}\big)$$ 
such that $\theta(t)(v)(u)= t(u,\sh v)$ for any $t \in  t_{U \times_H V, \lambda\times\rho}(X, \mathfrak{l})$,
 	$u \in U$ and $v\in V$. {\dgreen We show that}  $\theta(t)\in  t_{V,\rho}\big(t_{U,\lambda}(X,\mathfrak{l}),\mathfrak{L}_{U,\lambda}\big)$. {\dgreen Indeed, the map $\theta(t)$ is clearly a map of $H$-sets and moreover,} since $V$ is $H$-free, we have $H_v=1$ for any $v \in V$. Then
 	$$\mathfrak{L}_{U,\lambda}\big(1,\theta(t)(v),\theta(t)(v)\big)
 	=1=\rho(1,1,v,v).$$
 	
 	Clearly, $\theta(t)(v)$ is a map of {\dgreen $G$-sets}. Let $g \in G_u${\dgreen. Then $g \in G_{(u,\sh v)}$, and we} get
 	
    $$\mathfrak{l}\Big(g, \theta\big(t\big)(v)(u), \theta\big(t\big)(v)(u)\Big)= \mathfrak{l}\Big(g, t(u,\sh v), t(u,\sh v)\Big)$$
    	$$=\lambda(g,1,u,u)\rho(1,1,v,v)
    	=\lambda(g,1,u,u).$$
 	
 	 So $\theta(t) \in t_{V,\rho}\big(t_{U,\lambda}(X,\mathfrak{l}),\mathfrak{L}_{U,\lambda}\big)$. 
 	
 	Now we show that $\mathfrak{L}_{V,\rho}\circ\mathfrak{L}_{U, \lambda}={\dgreen \mathfrak{L}_{U \times_H V,\lambda\times\rho}}$.
 	Let $k \in K$ and  $f, \fp \in t_{V,\rho}\big(t_{U,\lambda}(X,\mathfrak{l}),\mathfrak{L}_{U,\lambda}\big)$ such that $kf \leq \fp$.
 	Let $v \in [H\backslash V]${\dgreen. Then} there exist a unique
 	$\sskv\in [H\backslash V]$ and some $\hkv \in H$ such that 
 	$${\rouge vk=\hkv\sskv.}$$ 
 	Let $u \in [G\backslash U]${\dgreen . Then} 
 	there exist a unique $\sshkvu \in [G\backslash U]$ and some $\ghkvu \in G$ such that
 	$$u\hkv= \ghkvu \sshkvu.$$ 
 	Then
 	$$(u,\sh v)= {\rouge (u\hkv\hkv^{-1},_{_H} v)= (u\hkv,_{_H}\hkv^{-1}v)}$$
 	$$= {\rouge \big(\ghkvu\sshkvu,_{_H}\sskv k^{-1}\big)}=\ghkvu\big(\sshkvu,_{_H}\sskv\big)k^{-1}.$$
We get
 	$$(u,\sh v)k= \ghkvu\big(\sshkvu,_{_H}\sskv\big).$$
 	Then 
 	$$\big(\sshkvu,_{_H}\sskv\big)= \sigma_k(u,\sh v)$$
 	and 
 	$$\ghkvu=g_{k,(u,\sh v)}w$$ for some $w \in G_{\sigma_k(u,\sh v)}$. We get the following commutative diagram:
{\dgreen
$$\xymatrix@R=6ex@C=3ex{
\sigma_k(u,\sh v)\ar[rr]^-w\ar[rd]_-{\ghkvu}&&\sigma_k(u,\sh v)\ar[dl]^-{g_{k,(u,\sh v)}}\\
&\rouge (u,\sh v)k 
}
$$
} 	
% 	\begin{picture}(20,60)
% 	
% 	\put(145,45){$\sigma_k(u,\sh v)$}
% 	\put(250,45){$\sigma_k(u,\sh v)$}
% 	\put(198,-2){\rouge $(u,\sh v)k$ }
% 	\put(310,-2){$.$ }
% 	\put(195,48){\vector(1,0){46}}
% 	\put(250,38){\vector(-1,-1){30}}
% 	\put(180,38){\vector(1,-1){30}}
% 	
% 	
% 	\put(250,22){$g_{k,(u,\sh v)}$}
% 	\put(215,53){$w$}
% 	\put(160,22){$\ghkvu$}
% 	\end{picture}
 	
Using the commutativity of the above diagram and Lemma \ref{bij} we get 
 	$$
 	\mathfrak{L}_{V, \rho} \circ \mathfrak{L}_{U, \lambda}(k, f, \fp)= 
 	\prod\limits_{v \in [H\backslash V]}\mathfrak{L}_{U, \lambda}\Big(\hkv,f\big(\sskv\big), \fp\big(v\big)\Big)\rho^{-1}\big(\hkv,k, \sskv, v\big)$$
 	
 	$$=\!\! \prod\limits_{\dgreen\substack{u \in [G\backslash U]\\v \in [H\backslash V]}}\!\!\mathfrak{l}\Big(\ghkvu,f\big(\sskv\big)(\sshkvu),\fp(v)(u)\Big)\lambda^{-1}\big(\ghkvu,\hkv, \sshkvu,u\big)\rho^{-1}\big(\hkv,k, \sskv, v\big)$$
 	
 	$$=\hspace{-4ex}\prod\limits_{\dgreen (u,_{_H} v) \in [G \backslash {(U\times_H V)}]}\hspace{-4ex}\mathfrak{l}\Big(\ghkvu,f\big(\sshkvu,_{_H}\sskv\big),\fp\big(u,\sh v\big)\Big)(\lambda\times\rho)^{-1}{\dgreen\Big(\ghkvu,k,\big(\sshkvu,\sh \sskv\big),(u,\sh v)\Big)}$$

 	$$= \hspace{-4ex} \prod\limits_{\dgreen(u,\sh v) \in [G \backslash {(U\times_H V)}]}\hspace{-4ex} \mathfrak{l}\Big(g_{k,(u,\sh v)}w,f\big(\sigma_k(u,\sh v)\big),\fp\big(u,\sh v\big)\Big)(\lambda\times\rho)^{-1}{\dgreen \Big(g_{k,(u,\sh v)}w,k,\big(\sshkvu,\sh \sskv\big),(u,\sh v)\Big)}$$

 	$$=  \hspace{-4ex}\prod\limits_{\dgreen(u,\sh v) \in [G \backslash {(U\times_H V)}]}\hspace{-4ex}\mathfrak{l}\Big(\ g_{k,(u,\sh v)},f\big(\sigma_k(u,\sh v)\big),\fp\big(u,\sh v\big)\Big)
 	(\lambda\times\rho)^{-1}{\dgreen\Big(g_{k,(u,\sh v)},k,\big(\sshkvu,\sh \sskv\big),(u,\sh v)\Big)}
 	$$
 	
 	$$= \mathfrak{L}_{\dgreen U\times_H V}(X,\mathfrak{l}).$$
\end{proof}
{\dgreen
\begin{rem} \label{non free} The following example shows that the assumption that $V$ is left free seems to be necessary for Assertion 5. Suppose that $H=N\rtimes K$ is a semidirect product of a normal subgroup $N$ with $K$. Let $G$ be the group $K$, viewed as a subgroup of $H$. Let moreover $U$ be the set $H$, viewed as a $(G,H)$-biset by left and right multiplication, and let $V$ be the set $K$, acted on by $K$ on the right by multiplication, and by $H$ on the left by projection to $K=H/N$, followed by multiplication in $K$. Let moreover $\lambda$ and $\rho$ be equal to the trivial functor on $\widehat{U}$ and $\widehat{V}$, respectively.\par
Then $U_\lambda\circ_HV_\rho=U\times_HV$, as $\lambda$ and $\rho$ are both trivial. Moreover $U\times_HV=G\times_HK$ is equal to the identity $(K,K)$-biset (this makes sense since $G=K$), so $T_{U\times_HV,\lambda\times \rho}=T_{\id_K,1}$ is the identity functor, by Assertion~4.\par
On the other hand $G\dom U=K\dom (NK)\cong N$, and $H\dom V$ has cardinality 1. So in the computation of the functor $\mathcal{L}_{U,1}$ appearing in $T_{U,1}(X,\mathfrak{l})$, we have a product of values of~$\mathfrak{l}$, indexed by $N$. So the composition $T_{V,1}\circ T_{U,1}$ cannot act in general as the identity on $(X,\mathfrak{l})$, if $N$ is non trivial. Hence $T_{V,\lambda}\circ T_{U,\rho}\neq T_{U\times_HV,\lambda\times \rho}$ in this situation.
\end{rem}
}
\begin{rem} \label{generalize}Let $G$ and $H$ be finite groups, and $U$ be a (finite) $(G,H)$-biset. Then one can check that the diagram
$$\xymatrix{
G\hbox{\sf-poset}\ar[r]^-{\tau_G}\ar[d]^-{T_U}&_CMG\hbox{\sf-poset}\ar[d]^-{T_{U,1_U}}\\
H\hbox{\sf-poset}\ar[r]^-{\tau_H}&_CMH\hbox{\sf-poset}\\
}
$$
of categories and functors is commutative, up to isomorphism, where the functor $T_U$ on the left is the usual generalized tensor induction functor for $G$-posets.
\end{rem}
\begin{lemma} Let $G$ and $H$ be finite groups, and let $(U,\lambda)$ be a  $C$-monomial $(G,H)$-biset. 
    Then there exists a unique map 
	$$\mathcal{T}_{U, \lambda} : B_C(G) \rightarrow B_C(H)$$ such that
	$\mathcal{T}_{U, \lambda}(\Lambda_{(X, \mathfrak{l})}) = \Lambda_{T_{U, \lambda}(X, \mathfrak{l})}$
	for any finite $C$-monomial $G$-poset $(X, \mathfrak{l})$.
\end{lemma}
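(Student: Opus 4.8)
The plan is to define $\mathcal{T}_{U,\lambda}$ by the prescribed formula and to check that it is well defined, uniqueness being automatic. First I would dispose of uniqueness: by Corollary~\ref{lefex} every element of $B_C(G)$ is of the form $\Lambda_{(X,\mathfrak{l})}$ for some $C$-monomial $G$-poset $(X,\mathfrak{l})$, so the required identity $\mathcal{T}_{U,\lambda}(\Lambda_{(X,\mathfrak{l})})=\Lambda_{T_{U,\lambda}(X,\mathfrak{l})}$ already prescribes the value of $\mathcal{T}_{U,\lambda}$ on every element, and at most one map can satisfy it. The entire content is therefore the assertion that the assignment $\Lambda_{(X,\mathfrak{l})}\mapsto\Lambda_{T_{U,\lambda}(X,\mathfrak{l})}$ is consistent: if $(X,\mathfrak{l})$ and $(Y,\mathfrak{m})$ are $C$-monomial $G$-posets with $\Lambda_{(X,\mathfrak{l})}=\Lambda_{(Y,\mathfrak{m})}$, then $\Lambda_{T_{U,\lambda}(X,\mathfrak{l})}=\Lambda_{T_{U,\lambda}(Y,\mathfrak{m})}$ in $B_C(H)$.

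To prove this I would invoke the criterion of Lemma~\ref{Lefeu}, which converts equality of Lefschetz invariants into equality of fixed-point Euler characteristics. It is enough to show that for every $C$-subcharacter $(W,\omega)$ of $H$, the integer $\chi\big(T_{U,\lambda}(X,\mathfrak{l})^{W,\omega}\big)$ depends on $(X,\mathfrak{l})$ only through the family $\big(\chi((X,\mathfrak{l})^{V,\nu})\big)_{(V,\nu)\in\ch(G)}$. Granting this, the hypothesis $\Lambda_{(X,\mathfrak{l})}=\Lambda_{(Y,\mathfrak{m})}$ gives, by Lemma~\ref{Lefeu} applied over $G$, equality of these families; hence $\chi\big(T_{U,\lambda}(X,\mathfrak{l})^{W,\omega}\big)=\chi\big(T_{U,\lambda}(Y,\mathfrak{m})^{W,\omega}\big)$ for all $(W,\omega)$; and then Lemma~\ref{Lefeu} applied over $H$ yields the desired equality in $B_C(H)$.

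The heart of the argument is thus an explicit description of the monomial fixed-point poset $T_{U,\lambda}(X,\mathfrak{l})^{W,\omega}$. A map $f\in t_{U,\lambda}(X,\mathfrak{l})$ is $W$-fixed exactly when it is constant on right $W$-orbits, so such $f$ are parametrized by their values $x_u=f(u)$ on representatives $u\in[G\backslash U/W]$; setting $P_u=\{g\in G\mid gu\in uW\}$ (a subgroup of $G$ containing $G_u$), the value must satisfy $x_u\in X^{P_u}$, and the order on $f$ is the componentwise order on the tuple $(x_u)$. The defining constraint of $t_{U,\lambda}$ forces $\mathfrak{l}_{x_u}|_{G_u}=\lambda(\cdot,1,u,u)$, while the $(W,\omega)$-condition reads $\mathfrak{L}_{U,\lambda}(w,f,f)=\omega(w)$ for $w\in W$. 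Grouping the product defining $\mathfrak{L}_{U,\lambda}(w,f,f)$ first by $(G,W)$-double cosets and then by $\langle w\rangle$-orbits of $G$-orbits inside each coset, the structural element accumulated around each cycle lies in $P_u$, so each factor is a value of $\nu_u:=\mathfrak{l}_{x_u}|_{P_u}$ times factors of $\lambda$ that do not involve $X$. Hence the $(W,\omega)$-condition depends on $(x_u)$ only through the characters $\nu_u$, taking the shape of a single equation $\prod_u\Theta_u(\nu_u)=\omega$, where $\Theta_u\colon\Hom(P_u,C)\to\Hom(W,C)$ is a fixed tensor induction of linear characters depending only on $(U,\lambda)$, $W$ and $u$. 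Using that $x_u\le x_u'$ forces $\mathfrak{l}_{x_u}|_{P_u}=\mathfrak{l}_{x_u'}|_{P_u}$ (Remark~\ref{res}), distinct tuples $(\nu_u)$ carry no order relations between them, so one obtains a genuine decomposition of posets
$$T_{U,\lambda}(X,\mathfrak{l})^{W,\omega}\;\cong\;\bigsqcup_{\substack{(\nu_u)_u\\ \prod_u\Theta_u(\nu_u)=\omega}}\;\prod_{u\in[G\backslash U/W]}(X,\mathfrak{l})^{P_u,\nu_u},$$
the union ranging over the finitely many admissible tuples (those with $\nu_u|_{G_u}=\lambda(\cdot,1,u,u)$). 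Since $\chi$ is additive on disjoint unions and multiplicative on products of posets (by the chain-rearrangement computation in the proof of Lemma~\ref{lefprp}), this gives
$$\chi\big(T_{U,\lambda}(X,\mathfrak{l})^{W,\omega}\big)=\sum_{(\nu_u)}\;\prod_{u}\chi\big((X,\mathfrak{l})^{P_u,\nu_u}\big),$$
an expression depending on $(X,\mathfrak{l})$ only through the numbers $\chi((X,\mathfrak{l})^{V,\nu})$, as needed.

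The main obstacle is the third paragraph: making the double-coset bookkeeping precise and, above all, verifying that both monomial conditions factor through the restricted characters $\nu_u$. Concretely, the delicate point is to check that the partial products of the elements $g_{w,\sigma}$ accumulated around each $\langle w\rangle$-cycle of $G$-orbits genuinely land in $P_u$, so that $\mathfrak{L}_{U,\lambda}(w,f,f)$ splits as $\prod_u\Theta_u(\nu_u)(w)$ with $\Theta_u$ independent of $(X,\mathfrak{l})$; everything else (uniqueness, the two applications of Lemma~\ref{Lefeu}, and the multiplicativity of $\chi$) is then formal.
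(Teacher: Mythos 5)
Your proposal is correct and follows essentially the same route as the paper's proof: uniqueness via Corollary~\ref{lefex}, reduction to fixed-point Euler characteristics via Lemma~\ref{Lefeu}, and the key decomposition $T_{U,\lambda}(X,\mathfrak{l})^{K,\theta}\cong\bigsqcup_{\xi\in\Xi}\prod_{u\in[G\backslash U/K]}(X,\mathfrak{l})^{{}^uK,\xi_u}$, where your $P_u$ is the paper's ${}^uK$ and your $\Theta_u(\nu_u)$ is its character $k\mapsto\xi_u(\gamma_{k,u})\phi_u(k)$. The one obstacle you flag dissolves exactly as in the paper: choosing representatives $t\in[K\cap G^u\backslash K]$ and writing $tk=c_{k,t}\tau_k(t)$ with $c_{k,t}\in K\cap G^u$, each individual factor $\gamma_{k,t,u}$ (defined by $uc_{k,t}=\gamma_{k,t,u}u$) already lies in $P_u$, so no accumulation of partial products around $\langle w\rangle$-cycles is needed.
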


\begin{proof} We show that if $(X,\mathfrak{l})$
	and $(Y,\mathfrak{m})$ are finite $C$-monomial $G$-posets such that if
	$\Lambda_{(X,\mathfrak{l})} = \Lambda_{(Y,\mathfrak{m})}$ in $B_C(G)$,
	 then $\Lambda_{T_{U, \lambda}(X,\mathfrak{l})} = \Lambda_{T_{U, \lambda}(Y,\mathfrak{m})}$ in $B_C(H).$
	 So it's enough to show that {\rouge $\chi\Big(T_{U,\lambda}\big((X, \mathfrak{l})\big)^{K, \theta}\Big)= \chi\Big(T_{U,\lambda}\big((Y, \mathfrak{m})\big)^{K, \theta}\Big)$}, by Lemma \ref{Lefeu} for any
	  $(K, \theta)$ of $\ch(G)$.
	 
	Let $u \in [G\backslash U /K]$, {\rouge $k \in K$} and  $t \in [K\cap G^u \backslash K]$ then there exist
	a unique $\sskut \in [G\backslash U]$ and some
$\gkut \in G$   such that 
$$utk= \gkut\sskut.$$ 
Also there exist some $c_{k,t} \in K\cap G^u$ and a unique $\tau_k(t)\in [K\cap G^u \backslash K]$
such that 
$$tk= c_{k,t}\tau_k(t).$$  
Since $c_{k,t} \in K\cap G^u$, there exists
$\gamma_{k,t,u}\in G$ such that 
$$uc_{k,t}= \gamma_{k,t,u}u.$$ 
Now
$$utk= uc_{k,t}\tau_k(t)= \gamma_{k,t,u}u\tau_k(t)=\gkut\sskut.$$
So $\sskut= u{\dgreen \tau_k(t)}$ and there exists $w \in G_{\sskut}$ such that 
$\gkut= \gamma_{k,t,u} w$. We get the following commutative diagram:
{\dgreen
$$\xymatrix@R=6ex@C=3ex{
\sskut\ar[rr]^-{w}\ar[dr]_-{\gkut}&&\sskut\ar[dl]^-{\gamma_{k,t,u}}\\
&\rouge utk&
}
$$
}
%\begin{picture}(10,80)
%
%\put(158,45){$\sskut$}
%\put(240,45){$\sskut$}
%\put(205,-2){\rouge$utk$ }
%\put(330,0){.}
%
%\put(193,48){\vector(1,0){44}}
%\put(250,38){\vector(-1,-1){30}}
%\put(180,38){\vector(1,-1){30}}
%
%
%\put(250,22){$\gamma_{k,t,u}$}
%\put(213,50){$w$}
%\put(160,22){$\gkut$}
%
%\end{picture}

Now let {\rouge $f \in t_{U,\lambda}(X,\mathfrak{l})^{K, \theta}$} and $k \in K$ such that
$kf=f$. Note that since $f$ is $K$-fixed, we have 
{\dgreen 
$$f(utk)=f(ut)=f(u)=\gamma_{k,t,u}f\big(u\tau_k(t)\big)=\gamma_{k,t,u}f(u),$$
so $\gamma_{k,t,u} \in G_{f(u)}$. Hence 
$$f\big(\sskut\big)= f\big(u\tau_k(t)\big)= \gamma_{k,t,u}f(utk)=\gamma_{k,t,u}f(u)=f(u).$$
}
Let $\gamma_{k,u}= \prod\limits_{\rouge t \in [K\cap G^u\backslash K]}\gamma_{k,t,u}$ and 
$\phi_u(k)= \prod\limits_{\rouge t \in [K\cap G^u\backslash K]}\lambda^{-1}(\gamma_{k,t,u},k,\sskut, ut)$. 
Then

\begin{align*}
\mathfrak{L}(k,f,f)&= \prod\limits_{u \in [G \backslash U]}\mathfrak{l}\Big(g_{k,u}, f\big(\sigma_k(u)\big), f\big(u\big)\Big)\lambda^{-1}\big(g_{k,u},k, \sigma_k(u),u\big)\\
&=\prod\limits_{\substack{ u \in [G\backslash U /K]\\ {\rouge t \in [K\cap G^u\backslash K]}}}
\mathfrak{l}\Big(\gkut, f\big(\sskut\big), f\big(ut\big)\Big)\lambda^{-1}\big(\gkut,k,\sskut, ut\big)\\
&=\prod\limits_{\substack{ u \in [G\backslash U /K]\\ {\rouge t \in [K\cap G^u\backslash K]}}}
\mathfrak{l}\Big(\gamma_{k,t,u}w, f\big(\sskut\big), f\big(ut\big)\Big)\lambda^{-1}\big(\gamma_{k,t,u}w,k,\sskut, ut\big) \\
&=\prod\limits_{\substack{ u \in [G\backslash U /K]\\ {\rouge t \in [K\cap G^u\backslash K]}}}
\mathfrak{l}\Big(\gamma_{k,t,u}, f\big(\sskut\big), f\big(ut\big)\Big)\lambda^{-1}\big(\gamma_{k,t,u},k,\sskut, ut\big) \\
&= \prod\limits_{u \in [G\backslash U /K]} \mathfrak{l}_{f(u)}(\gamma_{k,u})\phi_u(k)\\
&=\theta(k).\\
\end{align*}

Let $\Xi$ be the family of the sets $\xi= \{\xi_u\}_{u \in [G\backslash U/ K]}$ where 
$\xi_u: {^uK} \rightarrow C$ is a character such that $\ssres^{G_{f(u)}}_{^uK}(\mathfrak{l}_{f(u)})= \xi_u$ and
$$\theta(k)=\!\!\!\!\prod\limits_{u \in [G\backslash U /K]}\!\!\!\! \xi_u(\gamma_{k,u})\phi_u(k)$$
for all $k\in K$ and $u \in [G\backslash U/ K]$.

We claim that

$$T_{U, \lambda}(X,\mathfrak{l})^{K, \theta}= \bigsqcup\limits_{\xi \in \Xi}\prod\limits_{u \in [G\backslash U/ K]}(X, \mathfrak{l})^{^uK, \xi_u}.$$

 Let $f \in T_{U, \lambda}(X, \mathfrak{l})^{K, \theta}$,
then $f(guk)=gf(u)$ for all $g \in G,\,u \in U,$
and $k \in K$. So to determine $f$, it's enough to know 
$f(u)$ for $u \in [G\backslash U/ K]$. Let $f(u)=x_u$.
Then $\ssres^{G_{x_u}}_{^uK}\mathfrak{l}_{x_u} \in \{\xi_u\}_{u \in [G\backslash U/ K]}$
for some $\{\xi_u\}_{u \in [G\backslash U/ K]} \in \xi$.

Conversely, let us choose $x_u \in X$ for any $u \in [G\backslash U /K]$. Let
$v \in V$ then $v=guk$ for some $g \in G$, for some $k \in K$
and a unique $u \in [G\backslash U /K]$. We set $f(v)=gx_u.$ 
Now $f$ is well defined if and only if $gx_u=x_u$ whenever $g \in {^uK}$ or equivalently
$x_u \in X^{^uK}$. We want that $f \in T_{U, \lambda}(X,\mathfrak{l})^{K,\theta}$. If
$x_u \in (X, \mathfrak{l})^{^uK, \xi_u}$ then $\ssres^{G_{x_u}}_{^uK}(\mathfrak{l}_{x_u})= \xi_u$
and 
$$\theta(k)=\!\!\!\!\prod\limits_{u \in [G\backslash U /K]}\!\!\!\! \xi_u(\gamma_{k,u})\phi_u(k).$$ 
So $f \in T_{U, \lambda}(X, \mathfrak{l})^{K, \theta}$.

Now using [\cite{BIS}, Lemma 11.2.9] we get

$$\chi\Big(T_{U, \lambda}(X,\mathfrak{l})^{K, \theta}\Big)= \sum\limits_{\xi \in \Xi}\prod\limits_{u \in [G\backslash U/ K]}\chi\big((X, \mathfrak{l})^{^uK, \xi_u}\big).$$

Thus, if $\Lambda_{(X,\mathfrak{l})} = \Lambda_{(Y,\mathfrak{m})}$ then $\Lambda_{T_{U, \lambda}(X,\mathfrak{l})} = \Lambda_{T_{U, \lambda}(Y,\mathfrak{m})}$. 
So we can define a map 
$$\mathcal{T}_{U, \lambda} : B_C(G) \rightarrow B_C(H)$$ such that
$\mathcal{T}_{U, \lambda}(a)= \Lambda_{T_{U, \lambda}(X,\mathfrak{l})}$ where $(X,\mathfrak{l})$ 
is a $C$-monomial $G$-poset such that
$a= \Lambda_{(X,\mathfrak{l})}$, as in Corollary \ref{lefex}.
\end{proof}

\begin{prop}\label{units}
Let $G$ and $H$ be finite groups, and let $(U,\lambda)$ be a $C$-monomial
$(G,H)$-biset. 
\begin{enumerate}
	\item $\mathcal{T}_{U, \lambda}([G,1_G]_G) = [H,1_H]_H.$
	\item $\mathcal{T}_{U, \lambda}(ab) = \mathcal{T}_{U, \lambda}(a)\mathcal{T}_{U, \lambda}(b)$, for any $a,\, b \in B_C(G).$
	\end{enumerate}
	In particular, the restriction of $\mathcal{T}_{U, \lambda}$ to $B_C(G)^\times$ is a group
homomorphism 
$$\mathcal{T}_{U, \lambda}^\times :B_C(G)^\times \rightarrow B_C(H)^\times.$$
\end{prop}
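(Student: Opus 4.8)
The plan is to deduce both assertions directly from the defining property $\mathcal{T}_{U,\lambda}(\Lambda_{(X,\mathfrak{l})})=\Lambda_{T_{U,\lambda}(X,\mathfrak{l})}$ established in the previous lemma, by feeding it well-chosen $C$-monomial $G$-posets and invoking the multiplicative behaviour of Lefschetz invariants (Lemma~\ref{lefprp}) together with the compatibility of $T_{U,\lambda}$ with the one-point poset and with products. Throughout I will use that the Lefschetz invariant of a $C$-monomial $H$-poset depends only on its isomorphism class: an isomorphism of $C$-monomial $H$-posets induces a length- and stabilizer-preserving bijection between $H$-orbits of chains that carries the associated subcharacters to conjugate ones, so the two defining sums agree term by term in $B_C(H)$.

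For the first assertion, I would begin by identifying the multiplicative identity. The one-point $C$-monomial $G$-poset $(\bullet,1)$ has a single chain, of length $0$, with stabilizer $G$ and trivial character, so $\Lambda_{(\bullet,1)}=[G,1_G]_G$, which is exactly the identity element of $B_C(G)$. Now Proposition~\ref{PM}(1) gives $T_{U,\lambda}(\bullet,1)=(\bullet,1)$ as a $C$-monomial $H$-poset, whence
$$\mathcal{T}_{U,\lambda}([G,1_G]_G)=\mathcal{T}_{U,\lambda}(\Lambda_{(\bullet,1)})=\Lambda_{T_{U,\lambda}(\bullet,1)}=\Lambda_{(\bullet,1)}=[H,1_H]_H.$$

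For multiplicativity, by Corollary~\ref{lefex} I may write $a=\Lambda_{(X,\mathfrak{l})}$ and $b=\Lambda_{(Y,\mathfrak{m})}$ for suitable $C$-monomial $G$-posets. Then Lemma~\ref{lefprp}(3) realizes the product as a single Lefschetz invariant, $ab=\Lambda_{(X\times Y,\mathfrak{l}\times\mathfrak{m})}$, so the defining property applies to $ab$ and gives
$$\mathcal{T}_{U,\lambda}(ab)=\Lambda_{T_{U,\lambda}(X\times Y,\mathfrak{l}\times\mathfrak{m})}.$$
The proposition asserting that $T_{U,\lambda}$ commutes with products yields $T_{U,\lambda}(X\times Y,\mathfrak{l}\times\mathfrak{m})\cong T_{U,\lambda}(X,\mathfrak{l})\times T_{U,\lambda}(Y,\mathfrak{m})$, and combining isomorphism-invariance of $\Lambda$ with a second application of Lemma~\ref{lefprp}(3), now inside $B_C(H)$, I obtain
$$\Lambda_{T_{U,\lambda}(X\times Y,\mathfrak{l}\times\mathfrak{m})}=\Lambda_{T_{U,\lambda}(X,\mathfrak{l})}\,\Lambda_{T_{U,\lambda}(Y,\mathfrak{m})}=\mathcal{T}_{U,\lambda}(a)\,\mathcal{T}_{U,\lambda}(b).$$

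Finally, the statement about unit groups is formal: since $\mathcal{T}_{U,\lambda}$ is multiplicative and sends the identity of $B_C(G)$ to the identity of $B_C(H)$, if $a\in B_C(G)^\times$ has inverse $a^{-1}$ then $\mathcal{T}_{U,\lambda}(a)\mathcal{T}_{U,\lambda}(a^{-1})=\mathcal{T}_{U,\lambda}([G,1_G]_G)=[H,1_H]_H$, so $\mathcal{T}_{U,\lambda}(a)$ is a unit and the restriction $\mathcal{T}_{U,\lambda}^\times$ is a well-defined group homomorphism. The only point requiring genuine care, rather than bookkeeping, is the passage from the isomorphism supplied by the product proposition to an equality of Lefschetz invariants; this is handled once and for all by the isomorphism-invariance remarked at the outset. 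Note that no additivity of $\mathcal{T}_{U,\lambda}$ is needed or available here---the argument uses only its multiplicative structure.
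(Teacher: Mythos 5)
Your proposal is correct and follows essentially the same route as the paper's own proof: it realizes $[G,1_G]_G$ as $\Lambda_{(\bullet,1)}$ and applies Proposition~\ref{PM}(1), then for multiplicativity writes $a$ and $b$ as Lefschetz invariants via Corollary~\ref{lefex}, collapses the product to a single invariant with Lemma~\ref{lefprp}(3), and uses the compatibility of $T_{U,\lambda}$ with products before applying Lemma~\ref{lefprp}(3) again in $B_C(H)$. Your explicit justification of isomorphism-invariance of $\Lambda$ (which the paper leaves tacit when passing from the isomorphism $T_{U,\lambda}(X\times Y,\mathfrak{l}\times\mathfrak{m})\cong T_{U,\lambda}(X,\mathfrak{l})\times T_{U,\lambda}(Y,\mathfrak{m})$ to an equality of invariants) is a welcome refinement, and you correctly read $\Lambda_{(\bullet,1)}=[G,1_G]_G$ where the paper's proof contains a typo.
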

\begin{proof}	
\begin{enumerate}
\item Consider the $C$-monomial $G$-poset $(\bullet, 1)$ then clearly $\Lambda_{(\bullet, 1)}=
[H,1_H]_H$. So using the first assertion of Proposition \ref{PM} we get
$$\mathcal{T}_{U, \lambda}([G,1]_G)= \Lambda_{T_{U, \lambda}(\bullet, 1)}=
 \Lambda_{(\bullet, 1)}= [H,1_H]_H.$$
\item Let $a,\,b \in B_C(G)$ then by Corollary \ref{lefex} there exist $C$-monomial $G$-posets $(X, \mathfrak{l})$
and $(Y,\mathfrak{m})$ such that $\Lambda_{(X,\mathfrak{l})}=a$ and $\Lambda_{(Y, \mathfrak{m})}=b$. Then 
$$\mathcal{T}_{U, \lambda}(ab)= \mathcal{T}_{U, \lambda}(\Lambda_{(X,\mathfrak{l})}\Lambda_{(Y, \mathfrak{m})})= \mathcal{T}_{U, \lambda}(\Lambda_{X\times Y,\mathfrak{l}\times\mathfrak{m}})= \Lambda_{T_{U, \lambda}(X\times Y,\mathfrak{l}\times\mathfrak{m})}$$
$$=\Lambda_{T_{U, \lambda}(X,\mathfrak{l})}\Lambda_{T_{U, \lambda}(Y, \mathfrak{m})}=\mathcal{T}_{U, \lambda}(a)\mathcal{T}_{U, \lambda}(b)$$
\end{enumerate}

\end{proof}

\begin{prop} \label{depends on U}Let $G$, $H$, and $K$ be finite groups.
\begin{enumerate}
	\item Let $\id_G$ stand for the identity $(G, G)$-biset. Then $\mathcal{T}_{\id_G, 1_G}$
	is the identity map of $B_C(G)$. 
	\item Let $(U,\lambda)$ and $(U^\prime, \lambda^\prime)$ be $C$-monomial $(G, H)$-bisets. Then for any $a \in B_C(G)$
\[\mathcal{T}_{U\sqcup U^\prime, \lambda\sqcup\lambda^\prime}(a)=\mathcal{T}_{U, \lambda}(a)\mathcal{T}_{U^\prime, \lambda^\prime}(a).\]
	\item Let $(U, \lambda)$ be a $C$-monomial $(G, H)$-biset and let $(V, \rho)$ be a monomial left free $(H, K)$-biset then
   $$\mathcal{T}_{V, \rho} \circ \mathcal{T}_{U, \lambda} = \mathcal{T}_{U \times_H V, \lambda \times \rho}.$$
\end{enumerate}	
	
\end{prop}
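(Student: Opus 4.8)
The plan is to deduce each of the three assertions directly from its functor-level counterpart in Proposition~\ref{PM}, using two facts established earlier. First, by Corollary~\ref{lefex} every element $a\in B_C(G)$ can be written as $a=\Lambda_{(X,\mathfrak{l})}$ for some $C$-monomial $G$-poset $(X,\mathfrak{l})$. Second, by construction the map $\mathcal{T}_{U,\lambda}$ is characterized by the identity $\mathcal{T}_{U,\lambda}(\Lambda_{(X,\mathfrak{l})})=\Lambda_{T_{U,\lambda}(X,\mathfrak{l})}$. Since every element of $B_C(G)$ is such a Lefschetz invariant, it suffices to verify each identity on elements of the form $\Lambda_{(X,\mathfrak{l})}$; as these exhaust $B_C(G)$, the pointwise verification yields the asserted equalities of maps.

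For assertion~1, I would take $a=\Lambda_{(X,\mathfrak{l})}$ and compute $\mathcal{T}_{\id_G,1_G}(a)=\Lambda_{T_{\id_G,1_G}(X,\mathfrak{l})}$; since $T_{\id_G,1_G}(X,\mathfrak{l})=(X,\mathfrak{l})$ by Proposition~\ref{PM}(4), this equals $\Lambda_{(X,\mathfrak{l})}=a$, so $\mathcal{T}_{\id_G,1_G}=\id$. For assertion~2, write $a=\Lambda_{(X,\mathfrak{l})}$ and use Proposition~\ref{PM}(3) to identify $T_{U\sqcup U',\lambda\sqcup\lambda'}(X,\mathfrak{l})$ with the product $T_{U,\lambda}(X,\mathfrak{l})\times T_{U',\lambda'}(X,\mathfrak{l})$. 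The extra ingredient here is the multiplicativity of the Lefschetz invariant under products, Lemma~\ref{lefprp}(3), which turns the Lefschetz invariant of this product into the product of the two Lefschetz invariants, giving $\mathcal{T}_{U,\lambda}(a)\,\mathcal{T}_{U',\lambda'}(a)$.

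For assertion~3, with $a=\Lambda_{(X,\mathfrak{l})}$ I apply the defining identity of $\mathcal{T}$ twice: $\mathcal{T}_{V,\rho}\big(\mathcal{T}_{U,\lambda}(a)\big)=\mathcal{T}_{V,\rho}\big(\Lambda_{T_{U,\lambda}(X,\mathfrak{l})}\big)=\Lambda_{T_{V,\rho}(T_{U,\lambda}(X,\mathfrak{l}))}$, which is legitimate because $T_{U,\lambda}(X,\mathfrak{l})$ is itself a $C$-monomial $H$-poset, so $\mathcal{T}_{V,\rho}$ may be applied to its Lefschetz invariant. Proposition~\ref{PM}(5), which requires $V$ to be left free, identifies the composite functor $T_{V,\rho}\circ T_{U,\lambda}$ with $T_{U\times_H V,\lambda\times\rho}$, whence the expression equals $\Lambda_{T_{U\times_H V,\lambda\times\rho}(X,\mathfrak{l})}=\mathcal{T}_{U\times_H V,\lambda\times\rho}(a)$.

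I expect no serious obstacle, since all the genuine content has already been discharged in Proposition~\ref{PM} and in the well-definedness of $\mathcal{T}_{U,\lambda}$. The only points requiring care are bookkeeping of which group each object lives over --- so that the defining identity of $\mathcal{T}$ is applied to a poset over the correct group, particularly in assertion~3 where $T_{U,\lambda}(X,\mathfrak{l})$ must first be recognized as a $C$-monomial $H$-poset before $\mathcal{T}_{V,\rho}$ is applied --- and the observation that Proposition~\ref{PM}(5) is naturally an identity only up to isomorphism of $C$-monomial posets, which is harmless because isomorphic $C$-monomial $G$-posets have equal Lefschetz invariants.
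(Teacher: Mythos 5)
Your proposal is correct and takes essentially the same route as the paper's own proof: write $a=\Lambda_{(X,\mathfrak{l})}$ via Corollary~\ref{lefex}, apply the defining identity $\mathcal{T}_{U,\lambda}\big(\Lambda_{(X,\mathfrak{l})}\big)=\Lambda_{T_{U,\lambda}(X,\mathfrak{l})}$, and invoke the corresponding functor-level assertions of Proposition~\ref{PM}, together with Lemma~\ref{lefprp}(3) for the disjoint-union case. Your two cautionary remarks --- that $T_{U,\lambda}(X,\mathfrak{l})$ must be recognized as a $C$-monomial $H$-poset before applying $\mathcal{T}_{V,\rho}$, and that identities holding only up to isomorphism are harmless since isomorphic $C$-monomial posets have equal Lefschetz invariants --- are correct points the paper leaves implicit.
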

\begin{proof}
		Let $a \in B_C(G)$ then by Corollary \ref{lefex} there exists a $C$-monomial $G$-poset $(X,\mathfrak{l})$
	such that $a= \Lambda_{(X,\mathfrak{l})}.$ 
	\begin{enumerate}
	 \item  Using the third assertion of Proposition \ref{PM}, we get
	$$\mathcal{T}_{\id_G,1_G}(a)=\mathcal{T}_{\id_G, 1_G}(\Lambda_{(X,\mathfrak{l})})
	=\Lambda_{T_{{\id_G, 1_G}}(X,\mathfrak{l})}= \Lambda_{(X, \mathfrak{l})}= a$$
	\item Using the second assertion of Proposition \ref{PM}, we get
	\begin{align*}
\mathcal{T}_{U\sqcup U^\prime, \lambda\sqcup\lambda^\prime}(a)&=\mathcal{T}_{U\sqcup U^\prime, \lambda\sqcup\lambda^\prime}(\Lambda_{(X,\mathfrak{l})})
	= \Lambda_{T_{U\sqcup U^\prime, \lambda\sqcup\lambda^\prime}(X,\mathfrak{l})}\\
&= \Lambda_{T_{U,\lambda}(X, \mathfrak{l})\times T_{U^\prime, \lambda^\prime}(X,\mathfrak{l})} = \Lambda_{T_{U,\lambda}(X, \mathfrak{l})}\Lambda_{T_{U^\prime, \lambda^\prime}(X, \mathfrak{l})}\\
	&=\mathcal{T}_{U,\lambda}(\Lambda_{(X,\mathfrak{l})})\mathcal{T}_{U^\prime, \lambda^\prime}(\Lambda_{(X,\mathfrak{l})})
	=\mathcal{T}_{U,\lambda}(a)\mathcal{T}_{U^\prime, \lambda^\prime}(a).
\end{align*}
	\item Using the fourth assertion of Proposition \ref{PM}, we get
	$$\mathcal{T}_{V,\rho}\circ\mathcal{T}_{U, \lambda}(a)=\mathcal{T}_{V,\rho} \circ\mathcal{T}_{U,\lambda}(\Lambda_{(X,\mathfrak{l})})
	= \Lambda_{T_{V,\rho} \circ T_{U,\lambda}(X,\mathfrak{l})}$$
	$$= \Lambda_{T_{U\times_HV, \lambda\times\rho}(X,\mathfrak{l})}=\mathcal{T}_{U\times_HV, \lambda\times\rho}(\Lambda_{(X,  \mathfrak{l})})=\mathcal{T}_{U\times_HV, \lambda\times\rho}(a).$$
	
		\end{enumerate}
	\end{proof}
\begin{cor} Let $G$ and $H$ be finite groups. The map $(U,\lambda)\mapsto \mathcal{T}_{U,\lambda}^\times$ of Proposition~\ref{units} extends to a bilinear map
$$B_C(G,H)\times B_C(G)^\times\to B_C(H)^\times.$$
\end{cor}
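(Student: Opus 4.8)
The plan is to read $B_C(G,H)$ as the monomial Burnside ring $B_C(G\times H)$, that is, the Grothendieck group of the category of $C$-monomial $(G,H)$-bisets with respect to disjoint-union decomposition; as an abelian group it is free on the isomorphism classes of transitive $C$-monomial $(G,H)$-bisets. Bilinearity is meant with respect to the additive structure on $B_C(G,H)$ and the multiplicative structures on the unit groups $B_C(G)^\times$ and $B_C(H)^\times$. Concretely, I must exhibit a pairing sending $(\beta,a)\in B_C(G,H)\times B_C(G)^\times$ to an element of $B_C(H)^\times$ that agrees with $\mathcal{T}_{U,\lambda}^\times(a)$ when $\beta=[(U,\lambda)]$, that carries $+$ in the first variable to products in $B_C(H)^\times$, and that is a group homomorphism in the second variable.

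First I would fix $a\in B_C(G)^\times$ and define the value on a generator $[(U,\lambda)]$ to be $\mathcal{T}_{U,\lambda}^\times(a)$, then extend to a difference $[(U,\lambda)]-[(U',\lambda')]$ by the formula $\mathcal{T}_{U,\lambda}^\times(a)\,\mathcal{T}_{U',\lambda'}^\times(a)^{-1}$. The crucial point is that this is well defined on $B_C(G,H)$. Two such differences coincide in $B_C(G,H)$ precisely when the associated bisets agree after adding disjoint unions, and Assertion~2 of Proposition~\ref{depends on U}, which converts disjoint union of bisets into the product of the corresponding tensor-induction values, then forces the two candidate outputs to be equal in the abelian group $B_C(H)^\times$. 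I would also note that the empty biset is sent to the identity: by Assertion~2 of Proposition~\ref{PM} the functor $T_{\emptyset,z}$ is constant at $(\bullet,1)$, so $\mathcal{T}_{\emptyset,z}(a)=\Lambda_{(\bullet,1)}=[H,1_H]_H$, the unit of $B_C(H)^\times$, matching the image of the zero element of $B_C(G,H)$.

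It then follows immediately that, for each fixed $a$, the resulting assignment $\beta\mapsto$ (its value) is a group homomorphism $B_C(G,H)\to B_C(H)^\times$; this is linearity in the first variable. Linearity in the second variable is exactly the final assertion of Proposition~\ref{units}: for a fixed biset $(U,\lambda)$ the map $\mathcal{T}_{U,\lambda}^\times\colon B_C(G)^\times\to B_C(H)^\times$ is a group homomorphism, and this property passes to an arbitrary $\beta$ because the value at $\beta$ is, by construction, a quotient of two such homomorphisms evaluated at $a$ inside the abelian group $B_C(H)^\times$. The argument is essentially formal once Propositions~\ref{units}, \ref{depends on U}, and~\ref{PM} are available; the only step requiring genuine care is the well-definedness on the Grothendieck group, where one must check that the defining relation of $B_C(G,H)$ is exactly disjoint-union decomposition, so that Assertion~2 of Proposition~\ref{depends on U} applies verbatim to any two representations of $\beta$ as a difference of effective classes.
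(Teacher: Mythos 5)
Your proposal is correct and takes essentially the same route as the paper, whose one-line proof cites exactly the ingredients you use: Assertion 2 of Proposition~\ref{PM} for the empty biset, Assertion 2 of Proposition~\ref{depends on U} to convert disjoint unions into products in $B_C(H)^\times$, and the isomorphism-invariance of $(U,\lambda)\mapsto\mathcal{T}_{U,\lambda}^\times$; you merely spell out the standard Grothendieck-group extension (well-definedness on differences of effective classes, homomorphy in each variable) that the paper leaves implicit. The only point you use silently rather than state is that $\mathcal{T}_{U,\lambda}^\times$ depends only on the isomorphism class of $(U,\lambda)$, which the paper flags explicitly and which your well-definedness step needs when two formal differences are identified in $B_C(G,H)$.
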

\begin{proof} This follows from Assertion 2 of Proposition~\ref{PM} and Assertion 2 of Proposition~\ref{depends on U}, and from the fact that the map $\mathcal{T}_{(U,\lambda)}^\times$ depends only on the isomorphism class of $(U,\lambda)$. \end{proof}
\begin{rem} It follows from Remark~\ref{generalize} that if $U$ is a finite $(G,H)$-biset, the square
$$\xymatrix{
B(G)\ar[r]^-{t_G}\ar[d]^-{\mathcal{T}_U}&B_C(G)\ar[d]^-{\mathcal{T}_{U,1_U}}\\
B(H)\ar[r]^-{t_H}&B_C(H)\\
}
$$
of groups and multiplicative maps, is commutative, where $\mathcal{T}_U$ on the left is the usual generalized tensor induction map for Burnside rings, and the horizontal maps $t_G$ and $t_H$ are the ring homomorphisms induced by the functors $\tau_G$ and $\tau_H$. 
\end{rem}
\section*{Acknowledgement} 
The second author was granted 
the Fellowship Program for Abroad Studies 2214-A 
by the Scientific and Technological Research Council of Turkey (T\"ubitak).
The second author also wishes to thank LAMFA for their
hospitality during the visit.

\begin{bibdiv}
	\begin{biblist}
		
		\bib{FIB}{article}{ author={Barker, L.}, title={ Fibred permutation sets and the idempotents and units of monomial
				Burnside rings}, journal={ Journal of Algebra}, year={2004}, volume={281}, pages= {535-566}}
			
		\bib{BIS}{book}{author={Bouc, S.}, title={Biset Functors for Finite Groups, Lecture Notes in Math.}, date={2010}, volume={1990}, publisher={Springer, Berlin,}}

		\bib{BURN}{article}{ author={Bouc, S.}, title={Burnside rings}, journal={Handbook of algebra}, year={2000}, volume={2}, pages={739- 804}}
		
		\bib{IND}{article}{ author={Carman, R.}, title={Unit Groups of Representation Rings and their Ghost Rings as Inflation Functors}, journal={Journal of Algebra}, year={2018}, volume={498}, pages={263- 293}}
		
		\bib{MON}{article}{ author={Dress, A.W.M}, title={The ring of monomial representations, I. Structure theory}, journal={Journal of Algebra}, year={1971}, volume={18}, pages={153-157}}

		\bib{quillen}{book}{ author={Quillen, D.}, title={Higher algebraic K-theory, Lecture Notes in Math.}, year={1973}, volume={341}, pages={85-147}, publisher={Springer, Berlin}}
	
		\bib{LEF}{article}{author={Th{\'e}venaz, J.}, title={Permutation representation arising from simplicial complexes}, journal={ J. Combin. Theory}, year={1987},
	    volume={46 Ser. A.},pages={122–155}}
			\end{biblist}
\end{bibdiv}
\begin{flushleft}
Serge Bouc\\
CNRS-LAMFA, Universit\'e de Picardie - Jules Verne\\
33 rue St Leu\\
80039 - Amiens - France\\
email: serge.bouc@u-picardie.fr
\end{flushleft}

\begin{flushleft}
Hatice Mutlu\\
Department of Mathematics, Bilkent University\\
06800 - Bilkent, Ankara - Turkey\\
email: hatice.mutlu@bilkent.edu.tr
\end{flushleft}

\end{document}